\documentclass[a4paper,12pt]{amsart} 
\setlength{\textheight}{23cm}\setlength{\textwidth}{16cm}\setlength{\oddsidemargin}{0cm}\setlength{\evensidemargin}{0cm}\setlength{\topmargin}{0cm}
\usepackage{amssymb, amsmath, amsthm, color,graphicx, tikz}
\usepackage[colorlinks=true, breaklinks=true, linkcolor=black, citecolor=blue, urlcolor=red]{hyperref} 
\usepackage[english]{babel}

\numberwithin{equation}{section}
\setlength\parindent{0pt}

\newtheorem{theol}{Theorem}

\newtheorem{theorem}{Theorem}[section]

\newtheorem{lemma}[theorem]{Lemma}
\newtheorem{corollary}[theorem]{Corollary}
\newtheorem{proposition}[theorem]{Proposition}

\theoremstyle{definition} 
\newtheorem{definition}[theorem]{Definition}
\newtheorem{notation}[theorem]{Notation}
\newtheorem{remark}[theorem]{Remark}
\newtheorem{example}[theorem]{Example}

\newcommand{\act}{\curvearrowright}
\DeclareMathOperator{\ad}{ad}
\newcommand{\al}{\alpha}
\DeclareMathOperator{\Aut}{Aut}
\newcommand{\cC}{\mathcal C}

\newcommand{\fC}{\mathfrak C}

\DeclareMathOperator{\Coc}{Coc}
\newcommand{\cD}{\mathcal D}

\DeclareMathOperator{\End}{End}
\newcommand{\cF}{\mathcal F}
\DeclareMathOperator{\Fix}{Fix}
\newcommand{\fs}{\{0,1\}^{(\N)} }
\newcommand{\is}{\{0,1\}^{\N} }
\newcommand{\cAF}{\mathcal{AF}}
\DeclareMathOperator{\Gr}{Gr}
\newcommand{\ga}{\gamma}
\newcommand{\Ga}{\Gamma}
\DeclareMathOperator{\Hom}{Hom}
\DeclareMathOperator{\Homeo}{Homeo}
\newcommand{\cI}{\mathcal I}
\DeclareMathOperator{\id}{id}
\DeclareMathOperator{\Isom}{Isom}
\newcommand{\La}{\Lambda}
\DeclareMathOperator{\Leaf}{Leaf}
\newcommand{\N}{\mathbf{N}}
\newcommand{\cN}{\mathcal{N}}
\newcommand{\NCV}{N_{H(\fC)}(V)}
\DeclareMathOperator{\ob}{ob}
\newcommand{\ot}{\otimes}
\newcommand{\ov}{\overline}
\newcommand{\Q}{\mathbf{Q}}
\newcommand{\R}{\mathbf{R}}
\DeclareMathOperator{\Stab}{Stab}
\newcommand{\cSF}{\mathcal{SF}}
\newcommand{\SNQ}{\Stab_N(\Q_2)}
\DeclareMathOperator{\supp}{supp}
\newcommand{\fT}{\mathfrak T}
\newcommand{\ti}{\tilde}
\newcommand{\varep}{\varepsilon}
\newcommand{\Z}{\mathbf{Z}}

\setcounter{page}{1}

\begin{document}

\title[Classification of fraction groups I]{Classification of Thompson related groups arising from Jones technology I}
\thanks{
AB is supported by the Australian Research Council Grant DP200100067 and a University of New South Wales start-up grant.}
\author{Arnaud Brothier}
\address{Arnaud Brothier\\ School of Mathematics and Statistics, University of New South Wales, Sydney NSW 2052, Australia}
\email{arnaud.brothier@gmail.com\endgraf
\url{https://sites.google.com/site/arnaudbrothier/}}
\maketitle

\begin{abstract}
In the quest in constructing conformal field theories (CFT) Jones has discovered a beautiful and deep connection between CFT, Richard Thompson's groups and knot theory. 
This led to a powerful functorial framework for constructing actions of particular groups arising from categories such as Thompson's groups and braid groups. 
In particular, given a group and two of its endomorphisms one can construct a semidirect product where the largest Thompson's group $V$ is acting.
These semidirect products have remarkable diagrammatic descriptions which were previously used to provide new examples of groups having the Haagerup property. 
They naturally appear in certain field theories as being generated by local and global symmetries.
Moreover, these groups occur in a construction of Tanushevski and can be realised using Brin-Zappa-Szep's products with the technology of cloning systems of Witzel-Zaremsky.

We consider in this article the class of groups obtained in that way where one of the endomorphism is trivial leaving the case of two nontrivial endomorphisms to a second article. 
We provide an explicit description of all these groups as permutational restricted twisted wreath products where $V$ is the group acting and the twist depends on the endomorphism chosen.
We classify this class of groups up to isomorphisms and provide a thin description of their automorphism group thanks to an unexpected rigidity phenomena.
\end{abstract}

\hspace{10cm} \textit{\`A la m\'emoire de Vaughan Jones} 

\section*{Introduction}

The present paper studies a class of groups that are constructed via a recent framework due to Jones \cite{Jones17-Thompson}. 
This framework was discovered by accident in the land of quantum field theory as we will now explain, see the following survey for more details \cite{Brothier-19-survey}.
Conformal field theories (in short CFT) \`a la Haag-Kastler provide subfactors and conversely \textit{certain} subfactors provide CFT but the reconstruction is on a case by case basis and so far the most intriguing subfactors (with exotic representation theory uncaptured by groups and quantum groups) are not known to provide a CFT \cite{Longo-Rehren95,Evans_Kawahigashi_92_sf_cft,Jones-Morrison-Synder14,Bischoff17,Xu18-CFT}.
By using the planar algebra of a subfactor, Jones created a lattice model approximating the desired CFT.
This did not converge to a classical CFT but rather defined a discontinuous physical model particularly relevant at quantum phase transition where Richard Thompson's group $T$ plays the role of the spatial conformal group \cite{Jones16-Thompson,Jones18-Hamiltonian,Osborne-Stiegemann19, Stiegemann19-thesis,Brot-Stottmeister-M19,Brot-Stottmeister-Phys}.
By extracting the mathematical essence of this construction Jones found a wonderful machine for constructing actions of certain groups (e.g.~Thompson's groups and braid groups) called \textit{Jones' actions}.

Recall that Richard Thompson's group $F$ is the group of piecewise linear homeomorphisms of the unit interval having finitely many breakpoints all at dyadic rationals and having slopes powers of 2.
There are two other groups: Thompson's group $T$ containing $F$ and translations by dyadic rationals acting by homeomorphisms on the unit circle, and Thompson's group $V$ containing $T$ and discontinuous exchanges of subintervals of $[0,1]$ that act by homeomorphisms on Cantor space, see \cite{Cannon-Floyd-Parry96} for details. 
We will be focusing on Thompson's group $V$ in this paper but this study stays relevant for the smaller Thompson's groups $F$ and $T$.

Thompson's groups $F,T$ and $V$ are countable discrete groups that are notoriously difficult to understand. 
However, they admit simple descriptions in terms of fraction groups of categories, which is the description used by Jones' technology.
It has been known for a long time that a category having nice cancellation properties (a category with a left calculus of fractions) provides a groupoid by formally inverting its morphisms and in particular groups by fixing a common source to all morphisms: such groups are called \textit{fraction groups} or \textit{groups of fractions} \cite{GabrielZisman67}.
In particular, the fraction group of the category of binary forests (specialised at the object $1$) is (isomorphic to) Thompson's group $F$: elements of $F$ are described by (an equivalence class of) a pair of trees having the same number of leaves.
The larger groups $T$ and $V$ have similar descriptions but with trees having their leaves decorated by natural numbers corresponding to permutations, see \cite{Brown87, Cannon-Floyd-Parry96}. 
Jones made the crucial observation that, if we consider any functor starting from such a category (e.g.~the category of binary forests), then we obtain an action of the fraction group (e.g.~of $F$, $T$ or $V$).
This led to a powerful and practical framework for constructing group actions and put in evidence unsuspected connections between different fields of research. 
We present some of those connections.

\subsection*{Application of Jones' technology}
$\bullet$ Functors from the category of binary forests into the category of Conway tangles provide ways to construct knots and links via Thompson's group $F$. 
Jones proved that they can all be constructed in that way concluding that `` Thompson's group $F$ is as good as the braid group for producing knots and links.''
He discovered the so-called Jones subgroup $\Vec F\subset F$ or oriented Thompson's group which has remarkable properties such as being isomorphic to the ternary Brown-Thompson's group $F_3$ as proved by Golan and Sapir \cite{Jones17-Thompson, Golan-Sapir17}. 
Aiello proved the beautiful result that any \textit{oriented} links can be produced using $\Vec F$ and the construction of Jones \cite{Aiello20}.
This led to a profound connection between Thompson's group $F$ and knot theory, providing new invariants for knots and linking for the second time, after the Jones polynomials, subfactor theory with knot theory \cite{Jones_polynome_vna,Jones19-thomp-knot}.

$\bullet$ Using functors ending in the category of Hilbert spaces we obtain unitary representations and have access to matrix coefficients that are explicitly computable via an algorithm depending on the functor chosen \cite{Jones17-Thompson,Jones19Irred,ABC19}.
Jones and the author used this approach for constructing new families of unitary representations and matrix coefficients for Thompson's groups $F,T,$ and $V$ \cite{Brot-Jones18-2,Brot-Jones18-1}.
In particular, a pair $(a,b)$ of two bounded operators acting on a Hilbert space $H$ and satisfying the \textit{Pythagorean relation} $a^*a+b^*b=\id_H$ provides a unitary representation of the largest Thompson's group $V$. 
This produces many new explicit examples of positive definite maps and a deep connection between Thompson's groups $F,T,$ and $V$ and the Cuntz algebra complementing previous works of Nekrashevych \cite{Nekrashevych04}.
This also led to new effortless proofs establishing analytic properties of Thompson's groups: any intermediate group between the derived group of $F$ and $V$ is not a Kazhdan's group and $T$ has the Haagerup property.
Although, these are not new results it demonstrates how powerful is Jones' technology.
Recall that Reznikoff proved that Thompson's group $T$ was not a Kazhdan's group which was also a consequence of the combined work of Ghys-Sergiescu and Navas \cite{Reznikoff01,Ghys-Sergiescu87,Navas02}. 
Moreover, Farley proved the stronger result that $V$ has the Haagerup propery implying that all of its subgroup has this property and in particular are not Kazhdan's groups \cite{Farley03-H}.

$\bullet$ A functor $\Phi: \cC\to \Gr$ ending in the category of groups gives an action $G_\cC\act K$ of the fraction group $G_\cC$ associated to the source category $\cC$ on a limit group $K$.
The author made the key observation that the semidirect product $K\rtimes G_\cC$ is again a fraction group and provided an explicit description of the category inducing this group \cite{Brothier19WP}.
Jones framework can be then reapplied to this semidirect product $K\rtimes G_\cC$ for constructing unitary representations and computing matrix coefficients.
With this method the author provided a new large class of semidirect products with the Haagerup property. 
In particular, all wreath products $\oplus_{\Q_2}\Ga\rtimes V$, with $\Ga$ any discrete group having the Haagerup property and $V\act \Q_2$ the classical action of Thompson's group $V$ on the dyadic rationals, have the Haagerup property \cite{Brothier19WP}.
This provided, using a result of Cornulier, the first examples of finitely presented wreath products having the Haagerup property for a nontrivial reason (i.e.~the group acting is nonamenable and the base space is infinite) \cite{Cornulier06}.
This class of wreath products/fraction groups is contained in the class of groups that we are studying in this article.

$\bullet$ Groups constructed as above naturally appear in certain field theories.
Stottmeister and the author constructed physical models in the line of Jones' work but also using previous constructions appearing in loop quantum gravity.
In those physical models, keeping the notation from above, the physical space is approximated by $\Q_2$ with local symmetries being the group $\Ga$ and Thompson's group $T$ playing the role of spatial symmetries acting by local scale transformations and rotations on $\Q_2.$
Together they generate a group of the form $K\rtimes T$ admitting a fraction group description where $K \subset \prod_{\Q_2}\Ga$ plays the role of a discrete loop group \cite{Brot-Stottmeister-M19,Brot-Stottmeister-Phys}.

\subsection*{Construction of groups}
We now present a specific class of fractions groups for which Jones' technology can be efficiently applied. 
Groups studied in this article and its sequel belong to this class.
Consider a functor $\Phi:\cF\to \Gr$ from the category of forests to the category of groups.
Jones' technology provides a Jones' action $F\act K$ of Thompson's group $F$ on a group $K$.
Consider the semidrect product $G_0:=K\rtimes F$. 
The author made the trivial but key observation that $G_0$ is itself a fraction group.
Therefore, we can reapply Jones' technology to $G_0$ as explained in the third bullet point above.
As an initial study we consider the simplest class of functors $\Phi$: when $\Phi:\cF\to\Gr$ is \textit{covariant} and \textit{monoidal}. 
By a universal property of $\cF$ the class of covariant monoidal functors $\Phi:\cF\to\Gr$ is in bijection with all triples $(\Ga,\al_0,\al_1)$ where $\Ga$ is a group and $\al_0,\al_1\in\End(\Ga)$ are endomorphisms of $\Ga$.
The monoidal assumption implies that we can extend the Jones action $F\act K$ to the largest Thompson's group $V$. 
Therefore, we obtain a larger group $G:=K\rtimes V$. 
The group $G$ is still a fraction groups with remarkable diagrammatic descriptions.

We are studying in this article and its sequel the class of such groups $G=K\rtimes V$.
More precisely, we are studying in the first article the class of groups $G$ obtained from triples $(\Ga,\al_0,\varep_\Ga)$ where $\varep_\Ga:\Ga\to \Ga, g\mapsto e_\Ga$ is the trivial endomorphism sending all elements of $\Ga$ to the neutral element $e_\Ga.$
These groups can be described as restricted permutational wreath products $\oplus_{\Q_2}\Ga\rtimes V$ constructed from the action $V\act \Q_2$ and are the ones appearing in the third bullet point.
More precisely, if $\Ga$ has the Haagerup property as a discrete group and $\al_0$ is injective, then $G$ has the Haagerup property \cite{Brothier19WP}.
In the second paper we will be considering fraction groups obtained from triples of the form $(\Ga,\al_0,\al_1)$ with $\al_0,\al_1$ automorphisms producing a rather different class of groups: the group $K$ is a \textit{discrete loop group} $L\Ga$, see \cite{Brothier20-2}.
The groups obtained in the second article are the ones appearing in the physical models studied in \cite{Brot-Stottmeister-M19,Brot-Stottmeister-Phys}. 

\subsection*{Motivations}
The motivation of this work is multifold. 
%
As remarked in the last bullet point the groups $K\rtimes T$ constructed from certain covariant monoidal functors $\Phi:\cF\to\Gr$ appear in physical models. It is then important to obtain information about these groups as they describe symmetries. Moreover, knowing if the groups $G$ are pairwise isomorphic or not becomes an important question to compare the various models. 

Perhaps, the most natural motivation is to understand what kind of groups can be obtained from functors $\Phi:\cF\to \Gr.$ 
Can we simply describe these groups? Are they often isomorphic to each other? What kind of properties do they satisfy? What are their internal symmetries? Do they share some of the exceptional behavior among groups like Thompson's groups?
All of this questions are important.
Indeed, to each group in this class we can apply efficiently Jones' technology and construct explicit unitary representations for which we can often compute their matrix coefficients. 
This can lead to establish that large classes of groups have the Haagerup property or are not Kazhdan's groups for instance as explained above. One then wants to know to what kind of groups this results apply to justifying the initial questions of this paragraph.
Explicit descriptions of these groups can provide new information in group theory on permanence of certain properties. This was the case for the Haagerup property and wreath products as mentioned above.

Another motivation is to construct Thompson-like groups and to provide new tools and approach to other pre-existing works. Conversely, we are motivated by adapting and using other known techniques and results to our class of groups.
There are a number of generalisations of Thompson's groups $F,T$, and $V$.
Cloning systems of Witzel-Zaremsky (using Brin-Zappa-Szep's products) and Tanushevski's construction produce two large classes of Thompson-like groups \cite{Witzel-Zaremsky18,Zaremsky18-clone,Tanushevski16,Tanushevski17}. 
The author discovered that these two classes of groups have large intersections with the class of groups obtained from Jones' technology and \textit{covariant} functors $\Phi:\cF\to\Gr$. 
This is a wonderful coincidence as the technologies and questions studied are very different. Hence, each formalism, results proved, and techniques developed will be profitable to each community.
We have extensively compare these technologies and how they connect to each other in a long appendix in the sequel of this article. 
Hence, we will be brief here but invite the interested reader to consult \cite[Appendix]{Brothier20-2}.
Interestingly and fortunately, results proved by each research group are rather disjoint.
In particular, all the results presented in this present article, its sequel and the one concerning the Haagerup property are new \cite{Brothier20-2,Brothier19WP}. 
Therefore, they provide new results in cloning systems and in Tanushevski's framework.
Even the description of this groups as wreath products or semidirect products of discrete loop groups with Thompson's groups $F, T,$ and $V$ seem to be new which should clearly help understanding these groups.
Conversely, cloning system technology connects our work with a conjecture of Lehnert (later modified via a theorem of Bleak-Matucci-Neunh\'offer) concerning co-context-free groups \cite{Lehnert08-thesis, BleakMatucciNeunhoffer16,BZFGHM18}. 
Tanushevski and Witzel-Zaremsky prove that a large class of our fraction groups have exceptional finiteness properties.
Moreover, Tanushevski provides extensive descriptions of the normal subgroups of the $K\rtimes F$ appearing in this article.
Finally, using cloning systems Ishida proves that many of our fraction groups can be (left- or bi-) ordered \cite{Ishida17}.

\subsection*{Other generalisations of Thompson's groups $F,T$, and $V$.}
There are a number of other generalisations of Thompson's groups. 
We present here a few of them and compare them with the class of groups constructed using Jones' technology. 
This explains where sit our fraction groups compared with other classical constructions of Thompson-like groups.
Elements of Thompson's group $F$ can be diagrammatically described by pairs of (rooted, finite, ordered) binary trees with the same number of leaves.
If we add cyclic permutations or any permutations of the leaves we obtain Thompson's groups $T$ and $V$, respectively \cite{Cannon-Floyd-Parry96}. 
Note that an element of $V$ can be represented by two trees $t,s$ so that each leaf of $t$ is linked to a unique leaf of $s$ by a curve. Linking leaves of $t$ to $s$ corresponds to a permutation.
By replacing the (possibly intersecting) curves by braids we obtain the braided Thompson's group independently discovered by Brin and Dehornoy \cite{Brin07-BraidedThompson,Dehornoy06}.
Now, instead of having binary trees, we may consider $n$-ary trees obtaining new groups $F_n,T_n,V_n$ for $n\geq 2.$
Moreover, we may replace trees by forests with a fixed number $r$ of roots. 
This produces the so-called Higman-Thompson's groups $V_{n,r}$ and Brown-Thompson's groups $F_{n,r},T_{n,r}$ \cite{Higman74,Brown87}. 
A pair of trees can be associated to a pair of partitions of the unit interval and a transformation of $[0,1]$. 
It is then possible to define $d$-dimensional analogues of Thompson's group $V$ by considering partitions of $[0,1]^d$ and obtaining the Brin-Thompson's groups $dV$ for $d\geq 1$ \cite{Brin04-nV}.
The $n$-ary versions $F_n,T_n,V_n$ for $n\geq 2$ of Thompson's groups $F,T,$ and $V$ are acting on the boundary of the infinite $n$-ary tree $\mathcal T_n$ which is the Cantor space $\{0,\cdots,n-1\}^\N$ equal to all infinite words in the alphabet $\{0,\cdots,n-1\}.$
The action consists in changing finitely many prefix of infinite words. 
One can enlarge these groups by acting on the remaining infinite part of words. This can be done by fixing a \textit{self-similar} subgroup $H$ of the automorphism group of the infinite $n$-ary tree $\mathcal T_n$.
We then obtain a so-called Rover-Nekrashevych's group $V_n(H)$ which is a subgroup of the almost automorphism group of $\mathcal T_n$ \cite{Rover99, Nekrashevych05}.
Hughes introduced the notion of finite similarity structure for a compact ultrametic space and locally finitely determined group of local similarities, also known as finite similarity structure (FSS) groups \cite{Hughes09}.
This generalises the key example of $V_n$ acting on $\{0,\cdots,n-1\}^\N$ and the groups of Rover-Nekrashevych when the self-similar group $H$ is finite.
There are other important classes of Thompson-like groups that have remarkable diagrammatic descriptions but no longer using trees or forests: for instance the \textit{diagram groups} of Guba-Sapir and their extension as \textit{picture groups} by Farley \cite{Guba-Sapir97,Farley05}.

We now compare with the class $\mathcal{CMF}$ of groups $G=K\rtimes V$ constructed using Jones' technology via covariant monoidal functors $\Phi:\cF\to\Gr$ or, equivalently, triples $(\Ga,\al_0,\al_1)$ with $\Ga$ a group and $\al_0,\al_1\in\End(\Ga).$
It is the class of groups considered in this article and its sequel.
Elements of $G$ can be represented by a pair of binary trees, with permutations of their leaves and moreover decoration of the leaves by elements of $\Ga.$ 
One can describe the elements of the $T$ and $F$ versions of $G$ by considering exclusively cyclic permutations or no permutations at all, respectively.
We obtain that for these groups $n=2,r=1,d=1$ for the notations of above: we are considering pairs of \textit{binary} forests with $r=1$ root (i.e.~trees) and in dimension $d=1$. 
We don't have braids either. 
Moreover, the group $\Ga$ has no interaction with the tree structure of $\mathcal T_2$ nor with Cantor space $\{0,1\}^\N$. 
Hence, as far as the author knows there are no large intersection between the class $\mathcal{CMF}$ and all the class of groups presented except for the groups constructed by Witzel-Zaremsky and Tanushevski as explained earlier.
Moreover, we do not see any similarity in the construction producing the class $\mathcal{CMF}$ and the other classes of groups. 
Here is an argument tending to show that the class of groups considered are rather different. 
Farley (resp. Hugues) proved the equivalent statement that all the picture groups (resp. all FSS groups) admits a proper isometric action on a CAT(0) cubical complex implying the Haagerup property \cite{Farley05,Hughes09}.
Our class of groups contain all restricted permutational wreath products $\oplus_{\Q_2}\Ga\rtimes V$ of all groups $\Ga$ deduced from the action $V\act \Q_2$ of $V$ acting on the dyadic rational of the unit interval. In particular, if $\Ga$ does not have the Haagerup property, then this group is not a FSS group nor a picture group. 

\subsection*{Presentation of the main results.}
Consider a triple $(\Ga,\al_0,\varep_\Ga)$ where $\varep_\Ga$ is the trivial endomorphism. Since the second endomorphism is trivial we may write $\al=\al_0$ and consider the pair $(\Ga,\al)$ to express the data of the triple $(\Ga,\al,\varep_\Ga).$
To this pair $(\Ga,\al)$ is associated a unique covariant monoidal functor $\Phi:\cF\to\Gr$ satisfying that $\Phi(n)=\Ga^n$ and $\Phi(Y):\Ga\to \Ga^2, g\mapsto (\al(g),e_\Ga)$ for all $n\geq 1.$
This provides a limit group $K=\varinjlim_{t\in\fT} \Ga_t$ and a Jones' action $\pi_\Phi:V\act K$. 
We are considering the semidirect product or fraction group $G=K\rtimes V$.
A general argument on a larger class of fraction groups proves that $K\subset G$ is a characteristic subgroup. 
Using an inductive limit trick argument we show that $\al$ can always be assumed to be an automorphism, see Section \ref{sec:End}.
Starting with $\al$ an automorphism, we obtain that $G=K\rtimes V$ is isomorphic to a restricted permutational twisted wreath product $\Ga\wr_{\Q_2}V=\oplus_{\Q_2}\Ga\rtimes V$. 
This wreath product is obtained from the classical action $V\act \Q_2$ of $V$ on the dyadic rationals $\Q_2$ of the unit interval. 
The twist is described using slopes of elements of $V$ and the automorphism $\al$, see Section \ref{sec:description}.
This provides a second powerful description (after the tree-like diagrammatic description) of these fraction groups.
Using the wreath product description and the fact that $\oplus_{\Q_2}\Ga\subset \Ga\wr_{\Q_2}V$ is characteristic we prove the surprising fact that all isomorphisms $\theta:G\to \ti G$ between such fraction groups $G=\Ga\wr_{\Q_2}V$ and $\ti G=\ti\Ga\wr_{\Q_2}V$ are \textit{spatial} in the following sense.\\
{\bf Definition.}
An isomorphism $\theta:G\to\ti G$ is called \textit{spatial} if the following hold.
There exists an isomorphism $\kappa:K\to \ti K$, a map $c:V\to \ti K$ and a homeomorphism $\varphi$ of Cantor space which restricts to a bijection of $\Q_2$ satisfying that
$$\theta(av) = \kappa(a)\cdot c_v \cdot \ad_\varphi(v) \text{ for all } a\in K, v\in V$$
where $\ad_\varphi(v):=\varphi v \varphi^{-1}, \ v\in V$ and moreover
$$\supp(\kappa(a)) = \varphi(\supp(a)) \text{ for all } a\in K$$
where $\supp(a)$ denotes the support of $a\in K$.

The fact that all isomorphisms are spatial implies the following classfication theorem:

\begin{theol}[Theorem \ref{th:isomGalpha}]\label{theol:A}
Consider two groups with automorphisms $(\Ga,\al\in\Aut(\Ga))$ and $(\ti\Ga,\ti\al\in\Aut(\ti\Ga))$ and their associated fraction groups $G:= K\rtimes V$ and $\ti G:=\ti K\rtimes V$.

The groups $G$ and $\ti G$ are isomorphic if and only if there exists $\beta\in\Isom(\Ga,\ti\Ga)$ and $h\in\ti\Ga$ such that $\ti\al = \ad(h)\circ\beta \al\beta^{-1}.$
\end{theol}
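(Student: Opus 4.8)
My plan is to prove the two implications of the equivalence separately, treating the ``only if'' direction (necessity) as the substantive one; there I would invoke the fact, established above, that every isomorphism $\theta\colon G\to\ti G$ is spatial.

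\emph{Necessity.} Given an isomorphism $\theta\colon G\to\ti G$, I write it in spatial form $\theta(av)=\kappa(a)\,c_v\,\ad_\varphi(v)$ ($a\in K$, $v\in V$) with $\kappa\colon K\xrightarrow{\sim}\ti K$, $c\colon V\to\ti K$ and $\varphi$ a homeomorphism of Cantor space restricting to a bijection of $\Q_2$ with $\supp(\kappa(a))=\varphi(\supp(a))$. For $x\in\Q_2$, $g\in\Ga$ write $\de_x^g\in K=\oplus_{\Q_2}\Ga$ for the function equal to $g$ at $x$ and trivial elsewhere. The support condition forces $\kappa(\de_x^g)=\de_{\varphi(x)}^{\beta_x(g)}$ for some $\beta_x\colon\Ga\to\ti\Ga$, and because $\kappa$ is an isomorphism and disjointly supported elements commute, each $\beta_x$ is an isomorphism $\Ga\xrightarrow{\sim}\ti\Ga$. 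Writing $c(v,x)\in\Z$ for the base-$2$ logarithm of the local slope of $v$ at $x$, the Jones action reads $\pi_\Phi(v)(\de_x^g)=\de_{vx}^{\al^{c(v,x)}(g)}$ (and likewise in $\ti G$ with $\ti\al$). Expanding $\theta(v\de_x^gv^{-1})=\theta(v)\theta(\de_x^g)\theta(v)^{-1}$ and comparing the $\ti\Ga$--coordinates at $\varphi(vx)$ then gives, for all $v\in V$ and $x\in\Q_2$,
\begin{equation}\label{eq:keynec}
\beta_{vx}\circ\al^{c(v,x)}=\ad\!\big(c_v(\varphi(vx))\big)\circ\ti\al^{\,c(\ad_\varphi(v),\,\varphi(x))}\circ\beta_x .
\end{equation}

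Next I would fix $x_0\in\Q_2$ and use that $\Stab_V(x_0)$ surjects onto $\Z$ via $v\mapsto c(v,x_0)$: choose $v_0\in\Stab_V(x_0)$ with $c(v_0,x_0)=1$. Since $v_0$ fixes $x_0$, so does $\ad_\varphi(v_0)=\varphi v_0\varphi^{-1}$ fix $\varphi(x_0)$, and comparing the slope-exponent homomorphisms on $\Stab_V(x_0)$ and $\Stab_V(\varphi(x_0))$ (which are interchanged by $\ad_\varphi$) gives $c(\ad_\varphi(v_0),\varphi(x_0))=c(v_0,x_0)=1$. Plugging $v=v_0$, $x=x_0$ into \eqref{eq:keynec} yields $\beta_{x_0}\circ\al=\ad(c_{v_0}(\varphi(x_0)))\circ\ti\al\circ\beta_{x_0}$, that is $\ti\al=\ad\big(c_{v_0}(\varphi(x_0))^{-1}\big)\circ\beta_{x_0}\,\al\,\beta_{x_0}^{-1}$; setting $\beta:=\beta_{x_0}$ and $h:=c_{v_0}(\varphi(x_0))^{-1}$ finishes this direction.

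\emph{Sufficiency.} Conversely, suppose $\ti\al=\ad(h)\circ\beta\al\beta^{-1}$. First I would absorb $\beta$: the coordinate-wise map $f\mapsto\beta\circ f$ on the base, with the identity on $V$, is an isomorphism $\Ga\wr_{\Q_2}V\xrightarrow{\sim}\ti\Ga\wr_{\Q_2}^{\,\beta\al\beta^{-1}}V$, reducing us to $\ti\Ga=\Ga$, $\ti\al=\ad(h)\circ\al$. Then I would use the products $H_k:=h\,\al(h)\cdots\al^{k-1}(h)$ (and the analogous formula for $k<0$), which satisfy $\ti\al^{\,k}=\ad(H_k)\circ\al^k$ and $H_{j+k}=H_j\al^j(H_k)$, to build a spatial isomorphism by hand: equation \eqref{eq:keynec} is to be solved for $(\beta_x)$ and $c$ so that the inner discrepancy between the $\al^{(\cdot)}$-- and $\ti\al^{(\cdot)}$--twists is absorbed exactly by the $H_k$. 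Concretely I would work in the tree--diagrammatic model, conjugate the leaf--decorations of a diagram by the $H_k$ dictated by the dyadic address of each leaf, add a correcting term on the opposite tree to restore the homomorphism property, and check that this correction lies in the restricted product $\oplus_{\Q_2}\Ga$.

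\emph{The main obstacle.} I expect the delicate part to be this sufficiency construction. The naive attempt to kill $\ad(h)$ by conjugating each base coordinate by $H_{c(v,v^{-1}x)}$ fails because that function has infinite support --- equivalently, the slope cocycle $c(v,\cdot)$ is not a coboundary on $V$, which is exactly why $\Ga\wr_{\Q_2}V$ (for $\al$) and $\Ga\wr_{\Q_2}^{\,\ad(h)\al}V$ are isomorphic but not via a coordinate-preserving map --- so one must genuinely use the freedom in $\varphi$ and in the cocycle $c$ and verify the finite-support condition directly. By contrast, once spatiality is granted, the necessity direction is essentially the single computation above, the only subtlety being the bookkeeping of the slope cocycle under $\varphi$, which trivialises on restricting \eqref{eq:keynec} to a point stabiliser.
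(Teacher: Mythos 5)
Your proposal is correct and follows essentially the same route as the paper: the necessity direction matches the paper's proof of Theorem \ref{th:isomGalpha} (spatiality from Proposition \ref{prop:supportone}, factorisation of $\kappa$ into a family $(\kappa_x)_x$, evaluation of the conjugation identity at a stabilised $x_0$ with a $v_0$ of slope $2$ there), and the sufficiency direction is precisely Lemmata \ref{lem:isomone} and \ref{lem:isomtwo}, your $H_k$ being the paper's $k_n$ and your correction ``at the dyadic address'' being the paper's conjugation by $f(x)=k_{\nu(x)}$ combined with the cocycle $c_v(vx)=k_{\log_2(v'(x))}^{-1}$.

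One point to tighten in the necessity step: the assertion $c(\ad_\varphi(v_0),\varphi(x_0))=c(v_0,x_0)$ does not follow merely from the fact that $\ad_\varphi$ carries $\Stab_V(x_0)$ to $\Stab_V(\varphi(x_0))$ and hence induces an automorphism of the slope-exponent quotient $\Z$; that automorphism could a priori be $-\id_\Z$. Ruling out the sign flip is exactly Proposition \ref{prop:slope} of the paper, which needs the extra dynamical observation that $v'(x)<1$ is equivalent to the iterates $v^n(y)$ converging to $x$ for $y$ near $x$, a property preserved by conjugation by any homeomorphism; you should make this explicit (or explicitly cite the slope-preservation proposition) rather than treating it as an immediate consequence of the interchange of stabilisers.
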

In particular, we obtain that two nonisomorphic groups $\Ga,\ti\Ga$ provide two nonisomorphic fraction groups whatever are the choices of automorphisms $\al$ and $\ti\al$.
Note that this theorem provides a classification of a particular class of twisted permutational restricted wreath products $\Ga\wr_{\Q_2}V$.

{\bf Comparison between general results on wreath products and our results.}
The above result is surprising as it cannot be expected for arbitrary classes of wreath products. 
It is known that the standard restricted wreath product $\Ga\wr\La:=\oplus_\La\Ga\rtimes \La$ remembers the group $\Ga$ (if $\Ga\wr\La\simeq \ti\Ga\wr \ti\La$, then $\Ga\simeq\ti\Ga$) and moreover the subgroup $\oplus_\La\Ga\subset \Ga\wr\La$ is characteristic (except for very few cases) but isomorphisms between two such groups are not necessarily spatial in the sense above of \cite{Neumann64}.
In Remark \ref{rem:AWP}, for each group $\Ga$ nontrivial, we provide a simple construction of a non-spatial automorphism of the standard restricted wreath product $G=\Ga^2\wr\Z$.
There exist some partial results regarding isomorphisms between particular permutational and twisted permutational wreath products extending Neumann's work \cite{Bodnarchuk94}.
Although, results similar to ours concerning properties of isomorphisms in non-standard wreath products typically hold for finite groups and with assumptions on $\Ga$ such as being indecomposable (i.e.~it is not the direct sum of two nontrivial subgroups) even when the action $\La\act X$ is transitive, see \cite{Gross88}.
It is remarkable that for our results no assumptions are required for the group $\Ga$.
In Remark \ref{rem:AWP}, we provide examples of permutational twisted wreath products $\Ga\wr_X B$ and $\ti\Ga\wr_{\ti X} B$ that are isomorphic with $B\act X, B\act \ti X$ transitive but such that $\Ga$ and $\ti\Ga$ are not isomorphic. 

The fact that isomorphisms behave well with the wreath product structure gives us hope to understand in detail all isomorphisms between such fraction groups.
We indeed succeeded in decomposing any automorphism of a fixed wreath product in our class into the product of four \textit{elementary} ones. 
Moreover, we could describe the structure of the automorphism group via an explicit semidirect product.
We restricted this study to \textit{untwisted} wreath products $G=K\rtimes V \simeq \oplus_{\Q_2}\Ga\rtimes V$ corresponding to pairs $(\Ga,\id_\Ga)$ where $\id_\Ga$ is the identity automorphism of $\Ga.$
The fraction group $G$ is then isomorphic to a permutational wreath product with a trivial twist justifying the terminology.

Before stating this main result, we introduce some notations:
$Z\Ga$ is the centre of $\Ga$; $N(G)/Z\Ga$ is the group of maps $f:\Q_2\to\Ga$ normalising $G$ mod out by constant maps valued in the centre of $\Ga$; and $\SNQ$ is the group of homeomorphisms of Cantor space normalising $V$ and stabilising a copy of $\Q_2$ inside Cantor space.
\begin{theol}[Theorem \ref{theo:isomB}]\label{theol:B}
Let $\Ga$ be a group and $G:=K\rtimes V\simeq \oplus_{\Q_2}\Ga\rtimes V$ be the fraction group associated to the pair $(\Ga,\id_\Ga)$.
There exists a surjective morphism from the semidirect product 
$$\left( Z\Ga\times N(G)/Z\Ga\right) \rtimes \left(\SNQ\times \Aut(\Ga)\right)$$
onto the automorphism group of $G$ whose kernel is the set $\{(\bar g,\ad(g)^{-1}):\ g\in\Ga\}$ where $\bar g\in N(G)/Z\Ga$ is the equivalence class of the constant map equal to $g$ and $\ad(g)\in\Aut(\Ga)$ is the inner automorphism of $\Ga$ associated to $g$.
\end{theol}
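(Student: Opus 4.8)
The plan is to realise $\Aut(G)$ as a quotient of the displayed semidirect product by exhibiting four mutually compatible families of ``elementary'' automorphisms, one for each factor, and then to prove surjectivity by using the spatiality theorem (and the fact, quoted above, that $K=\oplus_{\Q_2}\Ga$ is characteristic) to reduce an arbitrary automorphism to the identity, factor by factor. The kernel is then read off by a direct computation.

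\emph{The four families.} (i) For $\psi\in\Aut(\Ga)$ let $\Theta_\psi\in\Aut(G)$ act coordinatewise by $\psi$ on $K$ and fix $V$; it commutes with the coordinate permutations, so it is well defined. (ii) For $\varphi\in\SNQ$ let $\Theta_\varphi(av)=(\varphi\cdot a)\,\ad_\varphi(v)$, where $(\varphi\cdot a)(x):=a(\varphi^{-1}x)$ and $\ad_\varphi(v)=\varphi v\varphi^{-1}\in V$; because $\varphi$ normalises $V$, preserves $\Q_2$, and the coordinate action of $V$ on $K$ intertwines with $\ad_\varphi$, this is an automorphism, and $\varphi\mapsto\Theta_\varphi$ is injective since $V$ has trivial centraliser among homeomorphisms of Cantor space. (iii) For $f\in N(G)$ let $\Theta_f(av)=\ad(f)(a)\cdot c^f_v\cdot v$, where $\ad(f)$ is coordinatewise conjugation by $f$ and $c^f_v\in K$ is the finitely supported correction of $f\cdot{}^vf^{-1}$ allowed by the defining property of $N(G)$; the map $f\mapsto\Theta_f$ descends to $N(G)/Z\Ga$. (iv) For $z\in Z\Ga$ let $\Theta_z(av)=a\cdot\beta_z(v)\cdot v$, where $\beta_z\in Z^1(V,\oplus_{\Q_2}Z\Ga)$ is obtained, after a correction needed to remain finitely supported, from the cocycle of $V$ governing the wreath-product twist by inserting the central value $z$; writing $\beta_z$ down and checking $\Theta_z\in\Aut(G)$ is the delicate construction, and it is here that precisely the group $Z\Ga$ enters. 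One then checks that $\SNQ$ acts on $N(G)$ by $f\mapsto f\circ\varphi^{-1}$ (trivially on $Z\Ga$) and $\Aut(\Ga)$ by $f\mapsto\psi\circ f$ (and by $\psi$ on $Z\Ga$), that these two actions commute and are compatible with $(iv)$; this yields a homomorphism $\Psi$ from $\bigl(Z\Ga\times N(G)/Z\Ga\bigr)\rtimes\bigl(\SNQ\times\Aut(\Ga)\bigr)$ to $\Aut(G)$.

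\emph{Surjectivity.} Let $\theta\in\Aut(G)$. By spatiality $\theta(av)=\kappa(a)\,c_v\,\ad_\varphi(v)$ with $\kappa\in\Aut(K)$, $c:V\to K$, and $\varphi$ a homeomorphism of Cantor space restricting to a bijection of $\Q_2$, with $\supp(\kappa(a))=\varphi(\supp(a))$. Since $K$ is characteristic, $\theta$ induces an automorphism of $V=G/K$ which equals $\ad_\varphi$; hence $\varphi$ normalises $V$ and, being a bijection of $\Q_2$, lies in $\SNQ$. Replacing $\theta$ by $\Theta_\varphi^{-1}\circ\theta$ we may assume $\varphi=\id$, so $\kappa$ is coordinatewise, $\kappa(a)(x)=\psi_x(a(x))$ with $\psi_x\in\Aut(\Ga)$. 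Projecting the relation $\kappa({}^va)={}^{c_v}({}^v\kappa(a))$ to coordinates yields $\psi_{vx}=\ad(c_v(vx))\circ\psi_x$; by transitivity of $V\act\Q_2$ this forces $\psi_x=\ad(g_x)\circ\psi$ for a fixed $\psi\in\Aut(\Ga)$ and some $g:\Q_2\to\Ga$, and replacing $\theta$ by $\Theta_\psi^{-1}\circ\theta$ we may assume $\psi=\id$, i.e.\ $\kappa=\ad(f)|_K$ with $f:=(g_x)_x$. The homomorphism property of $\theta$ then gives, coordinatewise, $c_v(x)=f(x)f(v^{-1}x)^{-1}z_v(x)$ with $z_v(x)\in Z\Ga$; since $c_v\in K$, the map $f\cdot{}^vf^{-1}$ is $Z\Ga$-valued off a finite set for every $v$, which is exactly the membership $f\in N(G)$, and $z\in Z^1(V,\oplus_{\Q_2}Z\Ga)$. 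Replacing $\theta$ by $\Theta_f^{-1}\circ\theta$ reduces it to $\theta'(av)=a\,z_v\,v$. Finally one shows that the class of $z$ in $H^1(V,\oplus_{\Q_2}Z\Ga)$ is represented by $\beta_{z_0}$ for a unique $z_0\in Z\Ga$, coboundaries being of type $\Theta_f$ with $f\in\oplus_{\Q_2}Z\Ga\subset N(G)$; this rests on a cohomological computation of $H^1(V,\oplus_{\Q_2}Z\Ga)$ that uses perfectness of the point stabilisers of $V\act\Q_2$. Hence $\theta\in\mathrm{Im}\,\Psi$.

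\emph{The kernel and the main difficulty.} Suppose $\Psi(z,\bar f,\varphi,\psi)=\id_G$. Looking at the induced automorphism of $V=G/K$ forces $\ad_\varphi=\id$, hence $\varphi=\id$; the $\Theta_z$-part is then forced to be trivial (as $\beta_z$ is a genuine non-coboundary for $z\neq e_\Ga$), so $z=e_\Ga$; and ``$\Theta_f=\Theta_\psi^{-1}$'' forces $\psi=\ad(f(x))$ for every $x$ together with $f$ being $V$-invariant, hence constant, say $f\equiv g$, so $\psi=\ad(g)$. Therefore $\ker\Psi=\{(\bar g,\ad(g)^{-1}):g\in\Ga\}$, as stated. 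I expect the main obstacle to be step $(iv)$ together with its surjectivity counterpart: pinning down the exact class of maps forming $N(G)$, constructing the cocycles $\beta_z$ and verifying they define automorphisms, and carrying out the computation $H^1(V,\oplus_{\Q_2}Z\Ga)\cong Z\Ga$ that isolates the residual central twist; the rest is a clean cascade of reductions powered by spatiality and by $K$ being characteristic.
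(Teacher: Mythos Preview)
Your overall architecture matches the paper's exactly: define four elementary families, assemble them into a semidirect product, prove surjectivity by a cascade of reductions powered by spatiality and the fact that $K$ is characteristic, then compute the kernel. The surjectivity steps (remove $\varphi$, then $\psi$, then $f$, then the central cocycle) are in the same order as in the paper. However, two of the steps contain genuine errors, and they are precisely the ones you flagged as delicate.

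\textbf{The semidirect product structure is not what you claim.} You assert that $\SNQ$ acts on $N(G)/Z\Ga$ by $f\mapsto f\circ\varphi^{-1}$ and trivially on $Z\Ga$. This is false inside $\Aut(G)$: a direct computation gives
\[
\Theta_\varphi\circ E_\zeta\circ\Theta_\varphi^{-1}=E_\zeta\circ\ad(\zeta^{\mu_\varphi}),
\]
where $\mu_\varphi:\Q_2\to\Z$ is built from slopes and the dyadic valuation (the paper defines it explicitly). In particular $\SNQ$ does \emph{not} normalise the image of $Z\Ga$ alone; it only normalises the product $Z\Ga\times N(G)/Z\Ga$, and the action on that product has the twisted form $\sigma(\varphi,\beta)(\zeta,f)=(\beta(\zeta),\,\ov\beta(f)^\varphi\cdot\beta(\zeta)^{\mu_\varphi})$. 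With your simpler action the map $\Psi$ would not be a homomorphism. This is exactly the ``unexpected'' interaction the paper highlights, and it is the price of having corrected the slope cocycle by $\nu$ to make $E_\zeta$ land in $\Aut(G)$ rather than only in $\Aut(\ov K\rtimes V)$.

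\textbf{The membership $f\in N(G)$ is not what you say it is.} You write that $c_v\in K$ forces $f\,({}^v f)^{-1}$ to be $Z\Ga$-valued off a finite set, ``which is exactly the membership $f\in N(G)$''. Membership in $N(G)$ requires $f\,({}^v f)^{-1}$ to be \emph{trivial} off a finite set, not merely central; these differ already when $\Ga$ is abelian. Consequently your residual cocycle $z_v=c_v\bigl(f({}^v f)^{-1}\bigr)^{-1}$ need not be finitely supported, so the claimed $z\in Z^1(V,\oplus_{\Q_2}Z\Ga)$ is unjustified, and composing with $\Theta_f^{-1}$ is illegal because $\Theta_f$ may not be an automorphism of $G$. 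The paper fixes this by reversing the order: first classify \emph{all} cocycles $V\to\prod_{\Q_2}Z\Ga$ (they are exactly $s(\zeta)_v\cdot f_0(f_0^v)^{-1}$ for a unique $\zeta\in Z\Ga$ and $f_0$ unique up to constants), then rewrite $c_v=\zeta^{p_v}\cdot \tilde f(\tilde f^v)^{-1}$ with $p_v=\log_2(v')^v-\nu+\nu^v$ and $\tilde f=h\cdot f_0\cdot\zeta^{\nu}$. Now $\zeta^{p_v}$ \emph{is} finitely supported, hence so is $\tilde f(\tilde f^v)^{-1}=c_v\zeta^{-p_v}$, which is the correct proof that $\tilde f\in N(G)$. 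So the cohomological computation you anticipate must be done in $\prod_{\Q_2}Z\Ga$, not in $\oplus_{\Q_2}Z\Ga$, and the passage back to the restricted setting is the $\nu$-correction --- the same correction that produced $E_\zeta$ and that is responsible for the $\mu_\varphi$-twist above. Once you thread this through, the kernel computation goes exactly as you wrote.
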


There exists general results regarding the structure of the automorphism group of a wreath product \cite{Houghton63,Hassanabi78}. Although, since in general automorphisms are not spatial those results are much coarser than ours showing again how special are the class of wreath products arising from Jones' technology. Results similar to ours are known to hold when additional assumptions are made on $\Ga$ such as being finite, nonabelian and indecomposable, see \cite{Gross88}.

We refer the reader to Section \ref{sec:semidirect} for the definition of the action 
$$ \left(\SNQ\times \Aut(\Ga)\right) \act \left( Z\Ga\times N(G)/Z\Ga\right)$$
involved in the semidirect product of Theorem B.
The actions of $N(G),\SNQ$ and $\Aut(\Ga)$ on $G$ are not surprising: the first acts by conjugation, the second acts by shifting indices on $K=\oplus_{\Q_2}\Ga$ and by conjugation on $V$ and the third acts diagonally on $K=\oplus_{\Q_2}\Ga$ leaving $V$ invariant. 
However, the action of $Z\Ga$ on $G$ was unexpected. It is built using the slopes of elements of $V$ and the dyadic valuation.
More precisely, the map 
$$\ell:V\to \prod_{\Q_2}\Ga, \ell_v(x)=\log_2(v'(v^{-1}x)),\ v\in V, x\in\Q_2$$ satisfies the cocycle identity $\ell_{vw} = \ell_v + \ell_w^v, v,w\in V$ implying that given any $\zeta\in Z\Ga$ the formula 
$$av\mapsto a\cdot \zeta^{\ell_v}\cdot v,\ a\in \prod_{\Q_2}\Ga, v\in V$$ 
defines an automorphism of the \textit{unrestricted} wreath product $\prod_{\Q_2}\Ga\rtimes V.$
However, $\ell_v,v\in V$ is not finitely supported in general implying that our automorphism of above does not restrict to an automorphism of the \textit{restricted} wreath product.
By perturbating $\ell_v$, we can define a finitely supported map by considering $p_v=\ell_v + \nu-\nu^v$ where $\nu:\Q_2\to\Z$ is the dyadic valuation.
This latter map does satisfy the cocycle identity of above, is finitely supported and nontrivial when $v$ is.
Therefore, the formula
$$E_\zeta:av\mapsto a\cdot \zeta^{p_v}\cdot v,\ a\in \oplus_{\Q_2}\Ga, v\in V$$ 
defines an automorphism of $G$ for any $\zeta\in Z\Ga$ and in fact a faithful action $E:Z\Ga\act G.$

{\bf Rubin's faithful classes.}
As pointed out by an anonymous referee of this article the fact that all isomorphisms are spatial together with the thin description of automorphisms of each fraction group $G$ suggest that we are in a situation similar to where Rubin's technology applies \cite{Rubin89}.
A class $C$ of pairs $(X,G)$ where $X$ is a topological space and $G$ is a group acting by homeomorphisms on $X$ is called \textit{faithful}, if given any isomorphism $\theta:G\to \ti G$, then there exists a homeomorphism $f:X\to \ti X$ satisfying that $\theta$ is implemented by conjugation by $f$.
Rubin provided a number of examples of faithful classes. 
In particular, the class equal to the single pair $(V,\fC)$ where $V$ is the largest Thompson's group and $\fC$ is Cantor space is faithful; an essential fact used in this article.
It would be very interesting to know if for our class (or a certain subclass) of fraction groups we can find for each group $G$ a suitable topological space $X_G$ on which $G$ acts by homeomorphisms so that the class of pairs $(X_G,G)$ is faithful in Rubin's sense.

\subsection*{Further comments on the results and proofs.}

Consider the hypothesis of Theorem \ref{theol:A} and an isomorphism $\theta:G\to\ti G.$
We start by proving that $\theta(K)=\ti K$ using that $K\subset G$ is the unique maximal normal subgroup of $G$ satisfying a certain decomposability property.
This implies the following decomposition 
$$\theta(av) = \kappa(a)\cdot c_v \cdot \phi(v), \ a\in K,v\in V$$ 
with $\kappa:K\to\ti K$ an isomorphism, $c:V\to \ti K, v\mapsto c_v$ a map satisfying a cocycle condition and $\phi$ an automorphism of $V$.
Using a reconstruction theorem of Rubin we obtain the crucial fact: $\phi(v) = \varphi v\varphi^{-1}, v\in V$ for a (unique) homeomorphism $\varphi$ of Cantor space \cite{Rubin96}.
We then prove the surprising and unexpected fact that $\supp(\kappa(a)) = \varphi(\supp(a)), a\in K$ where $\supp(a):=\{x\in\Q_2:\ a(x)\neq e\}$ is the support of $a$. 
Note that this forces $\varphi$ to stabilise the dyadic rationals inside Cantor space, i.e.~$\varphi\in\SNQ.$
Moreover, this implies that $\kappa:\oplus_{\Q_2}\Ga\to \oplus_{\Q_2}\ti\Ga$ is a direct product of isomorphisms: $\kappa=\prod_{x\in\Q_2}\kappa_x \in \prod_{x\in\Q_2}\Isom(\Ga,\ti\Ga)$.

To prove the relation between $\al$ and $\ti\al$ we use the formula
$$(vav^{-1})(x) = \al^n(a(v^{-1}x)), \ a\in \oplus_{\Q_2}\Ga, v\in V, x\in \Q_2$$ where $2^n$ is the slope of $v$ at the point $v^{-1}x$.
We prove and use the fact that the slope of $\varphi v \varphi^{-1}$ at $\varphi(x)$ is equal to the slope of $v$ at $x$ when $x\in\Q_2,vx=x, v\in V$ and $\varphi\in\SNQ.$
Note that this equality of slopes will no longer be true in general if we were not assuming that $\varphi$ stabilises the dyadic rationals $\Q_2$, see Remark \ref{rem:Shayo}.
We will see in the second article that the situation is more complex when one has to work with \textit{all} automorphisms of $V$ and not only those coming from $\SNQ.$

We now consider the automorphism group of $G=K\rtimes V$ that we study in the \textit{untwisted} case. 
Hence, $G$ is a permutational wreath product of the form $\oplus_{\Q_2}\Ga\rtimes V$ for some group $\Ga.$
Using the fact that $K\subset G$ is a characteristic subgroup and the result concerning the support mentionned earlier we obtain that any $\theta\in\Aut(G)$ can be decomposed as 
$$\theta(av)=\kappa(a)\cdot c_v\cdot \ad_\varphi(v), a\in K,v\in V$$ such that $\kappa(a)(\varphi(x)) = \kappa_x(a(x)), x\in \Q_2$ for some $\kappa_x\in\Aut(\Ga)$ and a fixed $\varphi\in\SNQ.$
Up to a composition with $av\mapsto a^\varphi\cdot \ad_\varphi(v)$ we can assume that $\varphi$ is trivial.
Moreover, up to a composition with elements of the normaliser $N(G)$ and automorphisms of $\Aut(\Ga)$ acting on the wreath product we can assume that $\kappa$ is the identity.
It remains an automorphism of the form: $av\mapsto a\cdot c_v\cdot v$ which forces $c_v(x)$ to be in the centre of $\Ga$ for all $v\in V, x\in\Q_2.$
We obtain a map $v\in V\mapsto c_v\in \oplus_{\Q_2}Z\Ga$ satisfying the cocycle identity $c_{vw}=c_v \cdot c_w^v, v,w\in V$.
To finish the proof of the theorem we classify all such maps but valued in the product (not the direct sum) of the $Z\Ga$ that is:
$$\{d:V\to \prod_{\Q_2}Z\Ga:\ d_{vw}=d_v\cdot d_w^v, \ \forall v,w\in V\}.$$
These maps form an abelian group for the pointwise product and moreover any cocycle $d$ can be decomposed as a product of two as follows: $$d_v(x)=\zeta^{\log_2(v'(v^{-1}x))} \cdot f(x) f(v^{-1}x), v\in V, x\in\Q_2$$ for a pair $(\zeta,f)\in Z\Ga\times \prod_{\Q_2}Z\Ga$ that is unique up to multiplying $f$ by a constant map.
Hence, this later group of cocycles is isomorphic to $Z\Ga\times \prod_{\Q_2}Z\Ga/Z\Ga.$
By considering $p_v:=\log_2(v'(v^{-1}\cdot))+\nu-\nu^v, v\in V$ rather than $\log_2(v'(v^{-1}\cdot))$ we decompose the automorphism $av\mapsto a\cdot c_v\cdot v$ into a product of $\ad(f)$ with $f\in N(G)\cap \prod_{\Q_2}Z\Ga$ and an automorphism: $$E_\zeta:av\mapsto a\cdot \zeta^{p_v}\cdot v, a\in K, v\in V$$
with $\zeta\in Z\Ga$.
This achieves the proof that every automorphism of $G$ is the product of four kinds of elementary automorphisms as previously described.

We obtain that $\Aut(G)$ is generated by the copies of the groups $Z\Ga, N(G), \SNQ$ and $\Aut(\Ga)$. 
To avoid confusions we write here $D(Z\Ga)\subset N(G)$ for the subgroup of constant maps from $\Q_2$ to $Z\Ga.$
It is rather easy to see that $Z\Ga,N(G)/D(Z\Ga), \SNQ,\Aut(\Ga)$ sit faithfully inside $\Aut(G).$ 
We want to understand how those subgroups interact with each other.
A straightforward check shows that $Z\Ga$ commutes with $N(G)$ and $\SNQ$ commutes with $\Aut(\Ga).$
Both groups $\SNQ$ and $\Aut(\Ga)$ normalise $N(G)$ and act in the expected way: 
$$[(\varphi,\beta)\cdot f](x):= \beta(f(\varphi^{-1}x)),$$
where
$ \varphi\in\SNQ, \beta\in\Aut(\Ga), f\in N(G), x\in \Q_2.$
These actions are clearly factorisable into actions on the quotient group $N(G)/D(Z\Ga).$
The group $\Aut(\Ga)$ normalises $Z\Ga$ acting as $\beta\cdot \zeta:=\beta(\zeta), \beta\in\Aut(\Ga),\zeta\in Z\Ga.$
However, $\SNQ$ does not normalise $Z\Ga$ but normalises the product of groups $Z\Ga\times N(G)/D(Z\Ga).$
The action is more complicated than expected but can be written down using slopes of elements of $V$.
For clarity of the presentation we choose to first define a semidirect product 
$$\left( Z\Ga\times N(G)/D(Z\Ga) \right) \rtimes \left(\SNQ\times \Aut(\Ga)\right)$$ 
without referring to $G$ and thus proving that the complicating formula describing the action of $\SNQ\times\Aut(\Ga)$ on $Z\Ga\times N(G)/D(Z\Ga)$ is indeed well-defined.
We end the proof of Theorem \ref{theol:B} by defining the group morphism from the semidirect product $\left( Z\Ga\times N(G)/D(Z\Ga) \right) \rtimes \left(\SNQ\times \Aut(\Ga)\right)$ onto $\Aut(G)$ and by computing its kernel which is an easy task.

\subsection*{Comparison between this present first article and the second one.}
In this first article, we consider the class of fraction groups $G(\Ga,\al_0,\al_1)$ built from a triple $(\Ga,\al_0,\al_1)$ where $\Ga$ is a group, $\al_0,\al_1$ are endomorphisms with one of them trivial say $\al_1.$
We start by reducing to the case where $\al_0$ is an automorphism using a direct limit process, i.e.~there exists a group $\La$ and an automorphism $\beta_0$ such that $G(\Ga,\al_0,\varep_\Ga)\simeq G(\La,\beta_0,\varep_\Lambda).$
We prove a rigidity phenomena on maps between two of such fraction groups showing that all isomorphisms are spatial. 
This led to Theorem \ref{theol:A}: a complete classification of the class of fraction groups and Theorem \ref{theol:B}: a thin description of the automorphism group of an untwisted fraction group (i.e.~fraction groups obtained from the identity automorphism $\al_0=\id_\Ga$ and the trivial endomorphism $\al_1=\varep_\Ga$).
The proof of the rigidity phenomena is rather easy and so is the proof of Theorem \ref{theol:A}. The main technical difficulty is to obtain Theorem \ref{theol:B};  in particular in finding all automorphisms and understanding the structure of the automorphism group.

In the second article we follow a similar scheme than in the first one. 
We consider all triples $(\Ga,\al_0,\al_1)$ and their associated fraction groups $G(\Ga,\al_0,\al_1)$ where $\al_0,\al_1$ are any endomorphisms. 
Contrary to the first article there exists some groups $G(\Ga,\al_0,\al_1)$ not isomorphic to any $G(\La,\beta_0,\beta_1)$ with $\beta_0,\beta_1$ automorphism of a group $\La.$
After describing the general structure of $G(\Ga,\al_0,\al_1)$ for any triple we quickly specialise to $\al_0,\al_1$ being automorphisms in order to pursue a deeper study.
We prove a similar rigidity phenomena showing that all isomorphisms between two such fraction groups are spatial (up to multiplying the isomorphism by a centrally valued morphism).
This is the most difficult proof of the second article. From it we obtain a partial classification of the class of fraction group and in particular show that the fraction group obtained from $(\Ga,\al_0,\al_1)$ remembers $\Ga$. 
We do not know how to obtain a complete classification. 
We then prove that the two classes of groups considered in the first and second articles are disjoint: if $\Ga$ is nontrivial and $\al_0,\al_1$ are automorphisms, then $G(\Ga,\al_0,\al_1)$ is never isomorphic to $G(\La,\beta_0,\varep_\La)$ for a group $\La.$ 
This is proved by computing certain centralizer subgroups. 
We further prove that there are no nice embedding between one group of the first class and one group of the second class.
We describe the automorphism group of an untwisted fraction group (i.e.~when $\al_0=\al_1=\id_\Ga$) by decomposing each automorphism into a product of elementary ones. 
It is slightly easier to find these elementary automorphisms but the proof showing that there products exhaust all automorphisms is still quite subtle. 

\section*{Acknowledgement}
We warmly thank the anonymous referees for all the constructive and judicious suggestions they made to us.

\section{Preliminaries}\label{sec:preliminary}
In this section we start by briefly presenting the general framework of Jones' actions introduced in \cite{Jones16-Thompson}.
The general philosophy is that a nice category provides a group (a fraction group) and a functor starting from this category provides an action of this group (a Jones' action). 

The construction of such fraction groups comes from ideas of Ore which appear in the work of Mal'tsev and were adapted to categories by Gabriel and Zisman \cite{Maltsev53,GabrielZisman67}. 
The construction of actions from functors is the novel part of Jones' technology. 
Although, it can be interpreted as a special case of KAN extension. 
We refer the reader to the short appendix of \cite{Brothier19WP} and to the survey \cite{Brothier-19-survey} for details and a discussion.

We specialise our study to covariant monoidal functors from the category of binary forests to the category of groups which provides actions of Thompson's group $V$ on a group.
We consider the semidirect product which again has a natural structure of fraction group that we describe.
All of these have been extensively explained in \cite[Section 2]{Brothier19WP} that we refer the reader to for additional details.

We end this preliminary section by recalling and proving elementary facts on  Thompson's group $V$ that we consider as a subgroup of the homeomorphism group of Cantor space. Moreover, we recall properties of its automorphism group.

\subsection{Jones general framework}\label{sec:general-fram}
Consider a small category $\cC$ with a chosen object $e\in\ob(\cC)$ and assume that $\cC$ admits a calculus of left fractions in $e$. 
We then consider the set of pairs $(f,g)$ of morphisms of $\cC$ having domain $e$ and common codomain. 
We mod out this set of pairs by the equivalence relation generated by $(f,g)\sim (p\circ f, p\circ g)$ for any composable morphism $p$ and write $\dfrac{f}{g}$ for the equivalence class of $(f,g)$ calling it a fraction.
This quotient set admits a group structure given by the multiplication:
$$\dfrac{f}{g}\cdot \dfrac{f'}{g'} := \dfrac{p\circ f}{p'\circ g'} \text{ for any choice of $p,p'$ satisfying } p\circ g = p'\circ f'.$$
This forms a group $G_{\cC,e}=G_\cC$ that we call the fraction group of $(\cC,e)$ or simply the fraction group of $\cC$ if the context is clear.
Note that we are following here the original convention of Jones' construction. Hence, the fraction $\dfrac{f}{g}$ must be understood as the composition $f^{-1}\circ g$ where $f^{-1}$ is a formal inverse of $f$.

Jones made the fundamental observation that given a functor $\Phi:\cC\to\cD$ one can construct a group action $\pi_\Phi: G_\cC\act X_\Phi$ called the \textit{Jones' action}. 
The construction is again based on considering fractions.
To ease the presentation assume that morphism spaces of $\cD$ are sets and $\Phi$ is covariant. 
Consider the set of all pairs $(g,d)$ where $g$ is a morphism from $e$ to $a$ and $d$ is a morphism from $\Phi(e)$ to $\Phi(a)$ where $a$ runs over all objects of $\cC$.
Define the equivalence relation generated by $(g,d)\sim (f\circ g, \Phi(f)\circ d)$ on this set of pairs and write $\dfrac{g}{d}$ for the equivalence class of $(g,d)$ that we call a fraction.
We obtain a set of fractions $X_\Phi$ and the Jones action $\pi_\Phi:G_\cC\act X_\Phi$ defined by the formula:
$$\dfrac{f}{g}\cdot \dfrac{f'}{d} := \dfrac{p\circ f}{\Phi(p')\circ d} \text{ for any choice of $p,p'$ satisfying } p\circ g = p'\circ f'.$$
If the morphism spaces of $\cD$ are not sets, then we can adapt the construction by considering $\Hom_\cD(\Phi(e), \Phi(a))$ rather than $\Phi(a).$
A remarkable feature of this construction is that it does not require any assumptions on $\cD$ nor $\Phi.$ Hence, any functor starting form $\cC$ will provide a Jones' action. 
We will describe in details a collection of Jones' actions in Section \ref{sec:construction-FG} and in Proposition \ref{prop:twistWP}. 

If $\Phi$ is a covariant functor and $\cD$ is the category of Hilbert spaces with isometries for morphisms, then $X_\Phi$ is a pre-Hilbert space and the Jones action $\pi_\Phi$ can be extended into a unitary representation of $G_\cC$ on the completion of $X_\Phi.$
This provides a wonderful machine for constructing unitary representations and matrix coefficients, see \cite{Jones19Irred,Brot-Jones18-2,Brot-Jones18-1,Brothier19WP,ABC19}.

If $\Phi:\cC\to\Gr$ is a covariant functor where $\Gr$ is the category of groups, then $X_\Phi$ is a group and the Jones action $\pi_\Phi: G_\cC\act X_\Phi$ is an action by automorphisms on this group.
We can then consider the semidirect product $X_\Phi\rtimes G_\cC$ obtaining a group from the functor $\Phi$ (and the choice of a fixed object $e\in\cC$).

The author made the observation that $X_\Phi\rtimes G_\cC$ has a very natural description in terms of fraction group, see \cite{Brothier19WP}. 
There is a category $\cC_\Phi$ with the same objects as $\cC$ but with typically more morphisms. The fraction group of $(\cC_\Phi,e)$ is then isomorphic to $X_\Phi\rtimes G_\cC$.

We will describe and study those semidirect products when the initial category $\cC$ is a certain category of forests and $\Phi$ is covariant and monoidal.

\subsection{The case of forests and groups}

\subsubsection{The category of forests}\label{sec:forest}
{\bf Trees and forests.}
An ordered rooted binary tree $t$ is a tree-graph with one root $*$ and finitely many vertices.
We imagine it as a graph drawn in the plane with the root on the bottom and leaves on top.  Every vertex $v$ of $t$ that is not a leaf has two descendants $v_l,v_r$ that are vertices placed at the top left and top right of $v$, respectively. 
We say that the edge from $v$ to $v_l$ (resp. from $v$ to $v_r$) is a left edge (resp. a right edge).
A forest is a union of finitely many ordered rooted binary trees where the roots (and hence the trees) are ordered from left to right.
From now on we will call these objects trees and forests.

{\bf Category of forests.}
We form a (small) category $\cF$ whose set of objects is $\N:=\{0,1,2,3,\cdots\}$ and whose morphism space $\cF(n,m)$ from $n$ to $m$ is the set of all forests having $n$ roots and $m$ leaves for $1\leq n\leq m.$
By convention, $\cF(0,0)$ has one morphism corresponding to an empty diagram and $\cF(0,n)$ is empty for all $n\geq 1.$
The composition is done by stacking vertically forests $f,g$ on top of each other by lining the leaves of the bottom forest $g$ with the roots of the top forest $f$ obtaining $f\circ g.$
We may write $fg$ rather than $f\circ g$ when composing forests.
We equip this category with a monoidal structure $\otimes$ such that 
$n\otimes m = n+m$ for objects $n,m$ and $f\ot g$ is the horizontal concatenation of the two forests $f$ and $g$ where $f$ is on the left and $g$ on the right.
Note that the single element of $\cF(0,0)$ is the tensor unit.
We think of $\cF$ in the naive sense: it is a set equipped with two binary operations: a partially defined composition which corresponds to \textit{vertical} concatenations and a tensor product which corresponds to \textit{horizontal} concatenations.

{\bf Presentation.}
Denote by $I$ and $Y$ the tree with one leaf and the tree with two leaves, respectively. 
The category $\cF$ has the remarkable property that every morphism is the composition of tensor products of $I$ and $Y$.
Indeed, write $f_{j,n}= I^{\ot j-1}\ot Y\ot I^{\ot n-j}$ for the forest with $n$ roots, $n+1$ leaves such that the $j$-th tree is $Y$ and all the others are the trivial tree $I$ for $1\leq j\leq n.$
It is easy to see that every forest is the result of finite compositions of such $f_{j,n}$.
In fact, the category $\cF$ admits the presentation having $\{f_{j,n}:\ 1\leq j\leq n\}$ for set of generators and set of relations:
$$f_{q,n+1}\circ f_{j,n} = f_{j,n+1}\circ f_{q-1,n} \text{ for all } 1\leq j <q \leq n+1.$$

{\bf Partial order.}
Let $\fT$ be the set of all trees, which is the set of all morphisms of $\cF$ with source $1.$
Given a tree $t$ we equip its set of vertices with the usual tree distance $d$: $d(v,w)$ is the number of edges in the unique geodesic path between $v$ and $w$.
We equip $\fT$ with the following partial order:
$$t\leq s \text{ if and only if } s=f\circ t \text{ for some forest } f.$$
For each $n\geq 1$ we write $t_n$ for the tree with $2^n$ leaves all at distance $n$ from the root.
Observe that $(t_n:\ n\geq 1)$ is a cofinal sequence in $(\fT,\leq)$ meaning that for all $t\in \fT$ there exists $n\geq 1$ satisfying $t\leq t_n.$
We consider $t_\infty$ the \textit{infinite} rooted binary tree. 
For convenience, we will often identify elements of $\fT$ with finite rooted sub-trees of $t_\infty.$

\subsubsection{Cantor space}\label{sec:cantor}

{\bf Cantor space and the unit interval.}
We write $\fC$ for Cantor space that we define as being the set of all infinite sequences in $0$ and $1$, that is $\fC:=\{0,1\}^\N$, equipped with the product topology. 

We consider the map $$S:\fC\to [0,1], x=(x_n)_{n\in\N}\mapsto \sum_{n\in\N}\dfrac{x_n}{2^n}$$
which is surjective.
Recall that a dyadic rational is a number of the form $\dfrac{a}{2^b}$ with $a,b\in\Z$ equal to the ring $\Z[1/2].$
Any element of $[0,1]$ that is not a dyadic rational has a unique pre-image.
However, each dyadic rational $r\in(0,1)$ admits exactly two pre-images: $x,y\in\fC$ satisfying that there exists $N\geq 1$ such that $x_n=y_n$ if $n\leq N-1$, $x_N=1, y_N=0$ and $x_n=0,y_n=1$ for all $n\geq N+1$.
For example, $$\dfrac{5}{8} = \dfrac{1}{2} + \dfrac{1}{8} = S(101000\cdots) = S(100111\cdots) = \dfrac{1}{2} + \sum_{k=4}^\infty \dfrac{1}{2^k}.$$
In particular, $S$ realises a bijection from the set of finitely supported sequences $$\{x\in\fC:\ \exists N\geq 1,\ x_n=0,\ \forall n\geq N\}$$ onto the set of dyadic rationals contained in $[0,1)$. 

\begin{notation}We write $\Q_2$ for the set $\Z[1/2]\cap [0,1)$ that we identify with $\{ S(x):\ x\in\fC,\ \exists N\geq 1,\ x_n=0,\ \forall n\geq N\}$ and with the set of finitely supported sequences $\fs$ in $\fC$.\end{notation}

We write $\leq$ for the lexicographic order of $\fC$ and remark that $S(x)\leq S(y)$ if and only if $x\leq y$ for all $x,y\in\fC.$ 

{\bf Standard dyadic intervals.}
The topology of $\fC$ is generated by the following clopen sets (sets that are open and closed)  that are 
$$I:=\{m_I\cdot x:\ x\in \{0,1\}^\N\}$$ 
where $m_I\in\fs$ is a finite sequence of $0$ and $1$ that we call a word and where the symbol $\cdot$ is the concatenation.
We say that $I$ is a \textit{clopen interval} of $\fC$.
Observe that $S(I)=[\dfrac{a}{2^b},\dfrac{a+1}{2^b}]$ for certain $a,b\in\N$.
Moreover, if $|m_I|$ is the word length of the word $m_I$, then the Lebesgue measure of $S(I)$ is equal to $2^{-|m_I|}$ for any such $I$, i.e.~$|m_I|=b$ if we adopt the previous notations.
For technical reason we will consider the half-open interval  $\dot S(I):=[\dfrac{a}{2^b},\dfrac{a+1}{2^b})$ and call it a \textit{standard dyadic interval} (in short \textit{sdi}).
By abuse of terminology we may call $I$ a sdi rather than a clopen interval and may identify it with $\dot S(I)$ or with $\dot S(I) \cap \Q_2.$

{\bf Standard dyadic partitions.}
Consider a finite collection of clopen intervals $I_1,\cdots,I_n$ that are mutually disjoint and whose union is equal to $\fC$.
Up to reordering them we obtain that 
$$S(I_1)=[0,a_1], S(I_2) = [a_1,a_2],\cdots, S(I_n)=[a_{n-1},1]$$ with 
$$0<a_1<a_2<\cdots<a_{n-1}<1.$$
We say that the corresponding family of half-open intervals $[0,a_1), [a_1,a_2),\cdots, [a_{n-1},1)$ is a \textit{standard dyadic partition} of $[0,1)$ (in short \textit{sdp}).
The family is \textit{ordered} if $\sup(I_k)\leq \inf(I_{k+1})$ for all $1\leq k\leq n-1.$
If the context is clear we may suppress the word ordered.
The set of sdp admits a partial order: if $Q,P$ are sdp, then $Q\leq P$ if every sdi of $Q$ is a union of sdi of $P$.

{\bf Interpretation using trees.}
We now present how trees are useful for studying and representing Cantor space as a topological space.
Consider the rooted binary infinite tree $t_\infty$ with root $*$. 

{\bf From paths to sequences.}
Given an edge $e$ of $t_\infty$ we set $E(e)=0$ if $e$ is a left edge and $E(e)=1$ if $e$ is a right edge where $E$ stands for evaluation.
If $p$ is a path going from bottom to top (which is necessarily a geodesic then) it is the concatenation of some edges $p=e_1\cdot e_2\cdot e_3\cdots$. 
We extend $E$ on those paths as $E(p) = E(e_1)\cdot E(e_2)\cdot E(e_3)\cdots.$
If $p$ is finite, then $E(p)$ is a word in $0,1$ and if $p$ is infinite, then $E(p)\in\{0,1\}^\N.$
It is easy to see that $E$ realises a bijection from the set of infinite geodesic paths of $t_\infty$ with source $*$ and Cantor space.

{\bf From vertices to sdi.}
Given a vertex $\nu$ of $t_\infty$ there exists a unique geodesic path $p_\nu$ going from $*$ to $\nu$. 
We can then consider $E(p_\nu)\in\fs$ and the following subset of Cantor space:
$$I_\nu:=\{ E(p_\nu)\cdot x:\ x\in\is\}$$
that are all elements of $\fC$ with common prefix $E(p_\nu).$
We obtain bijections between vertices of $t_\infty$, finite sequences of 0,1 and sdi (i.e.~clopen intervals) of $\fC$.

{\bf From trees to sdp.}
Consider now a tree $t\in\fT$ that is a finite rooted tree that we identify with a rooted subtree of $t_\infty.$
To each leaf $\ell$ of $t$ corresponds a vertex of $t_\infty$ and thus a sdi $I^t_\ell$ of $\fC$. 
We obtain that $(I^t_\ell:\ \ell\in\Leaf(t))$ is a sdp of $\fC$ and in fact
$$t\in\fT\mapsto(I^t_\ell:\ \ell\in\Leaf(t))=:P_t$$
realises a bijection between the trees and the sdp of $\fC.$

{\bf Refinement and partial order.}
Consider now $t\in\fT$ and $f\in \Hom(\cF)$ a forest composable with $t$. 
Observe that the sdp $P_{f\circ t}$ associated to $f\circ t$ is a refinement of the sdp $P_t$: that is $P_t\leq P_{f\circ t}.$ 
Hence, $t\mapsto P_t$ defined above is an order-preserving bijection.

\subsubsection{Thompson's groups}

{\bf Thompson's group $V$.}
Consider two sdp $(I_1,\cdots,I_n)$ and $(J_1,\cdots,J_n)$ of $\fC$ with the same number of sdi. 
There exists a unique homeomorphism $v$ of $\fC$ satisfying that 
$$v(m_{I_k}\cdot x) = m_{J_k}\cdot x$$
for all $1\leq k\leq n$ and $x\in\is$ where $m_{I_k}$ is the unique word satisfying that 
$$I_k=\{ m_{I_k}\cdot x:\ x\in\is\}.$$
We write $V$ for the collection of all such maps $v$ which forms a group called Thompson's group $V$.
The element $v$ defines a bijection of $[0,1)$ as follows: consider the unique map realising an increasing affine bijection from $\dot S(I_k)$ to $\dot S(J_k)$ (recall that $\dot S(I_k)$ is equal to $S(I_k)$ minus its last point and $S:\fC\to [0,1]$ is the classical surjection).
This provides a piecewise linear bijection of $[0,1)$ having finitely many discontinuity points all at dyadic rational and having slopes powers of 2.
Equivalently, all such bijection of $[0,1)$ comes from an element of $V$.

{\bf Thompson's groups $F$ and $T$.}
Thompson's group $V$ contains two subgroups $F,T$ satisfying that $F\subset T$: the subgroup $T$ (resp. $F$) is the set of all transformation of $V$ which restricts to an homeomorphism of $\R/\Z$ (resp. of $[0,1)$) which is the set of all transformations $v$ as above sending an \textit{ordered} sdp $(I_1,\cdots,I_n)$ to another $(J_1,\cdots,J_n)$ in such a way that there exists $0\leq d\leq n-1$ satisfying that $v(I_k)=J_{k+d}$ (resp. $v(I_k)=J_k$) for all $1\leq k\leq n$ and where the index $k+d$ is considered modulo $n.$

{\bf Thompson's groups as fraction groups.}
The three Thompson's groups can be realised as fraction groups.
Thompson's group $F$ is isomorphic to the fraction group of $(\cF,1)$.
This comes from the fact that an element of $F$ is totally described by two ordered sdp with the same number of sdi.
The two sdp corresponds to a pair of trees with the same number of leaves.
To obtain Thompson's group $T$ one has to index the leaves of the trees in a cyclic way and thus we then consider the category of affine forests $\cAF$ with object the nonzero natural numbers and morphism space $\cAF(n,m)=\cF(n,m)\times \Z/m\Z, n\leq m.$
We obtain that $T$ is isomorphic to the fraction group of $(\cAF,1)$.
To obtain the larger Thompson's group $V$ one can have any indexing of the leaves of trees which corresponds in adding any permutation in the data of the trees giving the category $\cSF$ with same set of objects than before but with morphism space $\cSF(n,m)=\cF(n,m)\times S_m, n\leq m$ where $S_m$ is the group of permutations of $\{1,\cdots,m\}.$
Thompson's group $V$ is isomorphic to the fraction group of $(\cSF,1)$ and so $v\in V$ is equal to a fraction $\dfrac{\tau\circ t}{\sigma\circ s}$
with $t,s$ trees and $\tau,\sigma$ permutations playing the role of indexation of the leaves of $t$ and $s$, respectively.
Note that this fraction is equal to $\dfrac{\sigma^{-1}\tau \circ t}{s}=\dfrac{t}{\tau^{-1}\sigma\circ s}$ so we can always represent elements of $V$ with a fraction having at most one nontrivial permutation.

\subsubsection{Constructions of fraction groups}\label{sec:construction-FG}
The plan of this section is as follows.
First, we restrict the class of functors $\Phi:\cF\to\Gr$ considered that are in bijection with triples $(\Ga,\al_0,\al_1)$ with $\Ga$ a group and $\al_0,\al_1\in\End(\Ga).$
We fix one functor $\Phi$ and its associated triple $(\Ga,\al_0,\al_1).$
Second, we construct a limit group $\varinjlim_{t\in\fT}\Ga_t$ obtained by taking a limit on all trees $t\in\fT$. 
This limit group is the set $X_\Phi$ that we have described earlier but having, in this particular case, the extra-property of being a group.
Third, we provide three descriptions of the Jones action of Thompson's group $V$ on the limit group $\varinjlim_{t\in\fT}\Ga_t$. This is the Jones action $\pi_\Phi:V\act X_\Phi$ defined earlier.
Fourth, we describe the semidirect product $X_\Phi\rtimes V= \varinjlim_{t\in\fT}\Ga_t\rtimes V$ as a fraction group. 
To do this, we define a category $\cC_\Phi$ and establish that $X_\Phi\rtimes V$ is isomorpic to the fraction group of $(\cC_\Phi,1).$

{\bf Description of covariant monoidal functors.}
Consider $\Gr$ the category of groups that we equip with the classical monoidal structure: the direct sum $\oplus.$ 
Consider a (covariant) monoidal functor $\Phi:\cF\to \Gr$.
If $\Ga=\Phi(1)$, then $\Phi(n)=\Ga^n$ the $n$-th direct sum of $\Ga$ for all $n\geq 1$ since $\Phi$ is monoidal.
Consider $R:=\Phi(Y)$ (where $Y$ stands for the tree with two leaves) which is a group morphism from $\Ga$ to $\Ga\oplus\Ga$ and thus of the form 
$$R(g)=(\al_0(g),\al_1(g)),\ g\in \Ga$$ 
for some endomorphisms $\al_0,\al_1\in \End(\Ga)$.
Note that each morphism of $\cF$ is a composition of some $f_{j,n}=I^{\ot j-1}\ot Y\ot I^{\ot n-j}$ and that
$$\Phi(f_{j,n}) = \id_{\Ga^{j-1}} \oplus R \oplus \id_{\Ga^{n-j}}, \  n\geq 1, 1\leq j\leq n$$
since $\Phi$ is monoidal.
Therefore, $\Phi$ is completely described by $\Ga$ and the (ordered) pair $(\al_0,\al_1).$
Conversely, any choice of group $\Ga$ and pair of endomorphisms $(\al_0,\al_1)\in\End(\Ga)^2$ defines a covariant monoidal functor from $\cF$ to $\Gr.$
This latter fact comes from the presentation of the category $\cF$ given in Section \ref{sec:forest}.

{\bf Construction of the limit group $X_\Phi$.}
Assume we have chosen $\Ga,\al_0,\al_1$ and consider $R$ as above.
Let $\Phi$ be the associated monoidal functor.
For each tree $t\in\fT$ we consider 
$$\Ga_t=\{(g,t):\ g\in \Ga^n\}$$ 
a copy of the group $\Ga^n$ where $n=|\Leaf(t)|$ is the number of leaves of $t$.
We may identify $\Ga_t$ with the group $\{g:\Leaf(t)\to \Ga\}$ of all maps from $\Leaf(t)$ to $\Ga$.
Given a forest $f$ with $n$ roots we define 
$$\iota_{ft,t}:\Ga_t\to \Ga_{ft} \text{ such that } \iota_{ft,t}(g_1,\cdots,g_n):= \Phi(f)(g_1,\cdots,g_n).$$
For example, if $t=Y$ and $f=I\ot Y$, then $$\iota_{ft,t}(g_1,g_2) = (g_1, R(g_2)) = (g_1, \al_0(g_2), \al_1(g_2)).$$
This provides a directed system of groups $$(\Ga_t, \ \iota_{s,t}:\ s,t\in\fT, s\geq t)$$
indexed by the set of trees $\fT$.
Let $\varinjlim_{t\in\fT} \Ga_t$ be the directed limit which is still a group and which is in bijection with the quotient space:
$$\{ (g,t):\ t\in \fT,\ g\in \Ga^{ |\Leaf(t)|} \}/\sim$$
where $$(g,t)\sim (g',t') \text{ if and only if $\exists f,f'$ satisfying } ft=f't' \text{ and } \Phi(f)(g)=\Phi(f')(g').$$
This is the group $X_\Phi$ mentioned earlier in Section \ref{sec:general-fram}.

{\bf The Jones action of Thompson's group $V$ on $X_\Phi$.}

{\bf First description.}
We now describe the Jones action $\pi_\Phi: V\act \varinjlim_{t\in\fT} \Ga_t$.
If $v\in F$, then $v$ is described by a pair of trees $(t,s)$ and thus by a fraction $\dfrac{t}{s}.$
Consider $g\in \varinjlim_{t\in\fT}\Ga_t$.
Up to refining both $t$ and $s$ (by considering $(ft,fs)$ rather than $(t,s)$) we can assume that $g$ admits a representative $(h,s)\in \Ga_s$ and thus using the fraction notation we have $g=\dfrac{s}{h}.$
the Jones action is then $$\pi_\Phi\left(\dfrac{t}{s}\right)\left(\dfrac{s}{h}\right) = \dfrac{t}{h}.$$
Hence, $\pi_\Phi\left(\dfrac{t}{s}\right)$ restricted to $\Ga_s$ consists in the following trivial isomorphism:
$(h,s)\mapsto (h,t)$
from $\Ga_s$ to $\Ga_t.$

{\bf Second description.}
Here is another way to interpret this action.
Consider $g\in \varinjlim_{t\in\fT} \Ga_t$ and a representative $g$ in $\Ga_s$ for a tree $s$ that is large enough.
We may identify $g$ with a map $$\Leaf(s)\to \Ga, \ell\mapsto h_\ell.$$
We interpret $g$ as the tree $s$ where each of its leaf $\ell$ is decorated by $h_\ell$.
Consider $v\in V$ that we write as a fraction $\dfrac{\sigma\circ t}{s}$ where $t$ is a tree and $\sigma$ is a permutation.
Now $\sigma$ can be interpreted as a bijection $b:\Leaf(t)\to\Leaf(s)$ since $t$ and $s$ must have the same number of leaves.
We obtain that $\pi_\Phi(v)(\dfrac{s}{h})$ is equal to the tree $t$ such that any of its leaf $\lambda$ is decorated by the group element $h_{b(\lambda)}.$
Note that if $v\in F$, then $\sigma$ is trivial. 
This implies that we do not change the order of the component of $h$ (when reading from left to right the group element placed on top of the leaves of the tree $s$ or $t$).
If $v\in T$ we may change cyclically the order of the components and if $v\in V$ we may change using any permutation the order of the components.

{\bf Third description.}
Here is a spatial interpretation of Jones' action.
Consider $v\in V$ and $g\in \varinjlim_{t\in\fT}\Ga_t.$
There exist two sdp $I:=(I_1,\cdots,I_n)$ and $J:=(J_1,\cdots,J_n)$ of Cantor space such that $v(I_k)=J_k, 1\leq k\leq n$ and $v$ is adapted to the first sdp, i.e.~the restriction of $v$ to $I_k$ is affine.
There exist a tree $s$ and a representative of $g$ in $\Ga_s.$
Let $L:=(L_1,\cdots,L_m)$ be the sdp associated to $s$. 
The element $g$ is then the same data than the pair $(g_L,L)$ where $g_L:\fC\to\Ga$ is a map constant on each $L_k$ taking a certain value $g_k$ for $1\leq k\leq m.$
Let us assume that the partition $I$ is thinner than $L$. 
For simplicity, suppose for instance that $I=(L_1^0,L_1^1, L_2,\cdots,L_m)$ where $L_1^0,L_1^1$ are the first and second half of $L_1$, respectively.
We now represent $g$ as the pair $(g_I,I)$ with the map $g_I:\fC\to\Ga$ taking the values
$$g_I(x) =\begin{cases}
\al_0(g_1) \text{ if } x\in I_1=L_1^0\\
\al_1(g_1) \text{ if } x\in I_2=L_1^1\\
g_k \text{ if } x\in I_k=L_{k-1} \text{ with } 2\leq k\leq n
\end{cases}.$$
The element $\pi_v(g)$ is represented by the pair $(g_J,J)$ where:
$$g_J(x) =\begin{cases}
\al_0(g_1) \text{ if } x\in J_1\\
\al_1(g_1) \text{ if } x\in J_2\\
g_k \text{ if } x\in J_k \text{ with } 2\leq k\leq n
\end{cases}.$$

{\bf Description of the semidirect product $X_\Phi\rtimes V$ as a fraction group.}
We will now explain how we can interpret the semidirect product $\varinjlim_{t\in\fT} \Ga_t\rtimes V$ as a fraction group. 
We start by defining a category that admits a left calculus of fractions.
Consider the category $\cC_\Phi$ whose set of object is $\N$ and morphism spaces $\cC_\Phi(n,m)$ equal to $\cF(n,m)\times S_m\times \Ga^m$.
A morphism can be diagrammatically represented as a forest plus on top of it a permutation that we represent as a diagram with $n$ segments (if the forest has $n$ leaves) going from $(k,a)$ to $(\sigma(k),a+1), 1\leq k\leq n$ where the coordinates are taken in $\R^2$ and where $a$ stands for the altitude of the leaves in the diagram. On top of the permutation we place some elements of $\Ga$. Note that if $\Ga$ was trivial, then we will simply obtain a diagrammatic representation of the category giving the fraction group $V$. 
In picture we obtain that the morphism $(Y,(21), g_1,g_2)\in \cC_\Phi(n,m)$ is represented by the diagram:
\newcommand{\Ysigmag}{
\begin{tikzpicture}[baseline = .4cm]
\draw (0,0)--(0,.5);
\draw (0,.5)--(-.75,1);
\draw (0,.5)--(.75,1);
\draw (-.75,1)--(.75,1.5);
\draw (.75,1)--(-.75,1.5);
\node at (-.75,1.75) {$g_1$};
\node at (.75,1.75) {$g_2$};
\end{tikzpicture}
}
$$\Ysigmag \ .$$
We interpret $(Y,(21), g_1,g_2)$ as the composition of three morphisms being $(g_1,g_2), (21)$ and $Y$ and thus identify $\Ga^m$ and $S_m$ as subsets of the endomorphism space $\cC_\Phi(m,m)$ and $\cF(n,m)$ as subsets of the morphism space $\cC_\Phi(n,m), n,m\geq 1.$
The composition of morphisms is explained by the following diagrams where we freely use the identifications just mentioned:
\newcommand{\hforest}{
\begin{tikzpicture}[baseline = .4cm]
\draw (0,0)--(0,1);
\draw (1,0)--(1,2/3);
\draw (1,2/3)--(2/3,1);
\draw (1,2/3)--(4/3,1);
\node at (0,1.2) {$h_1$};
\node at (2/3,1.2) {$h_2$};
\node at (4/3,1.2) {$h_3$};
\end{tikzpicture}
}
\newcommand{\compoone}{\begin{tikzpicture}[baseline = .4cm]
\draw (0,0)--(0,.5);
\draw (0,.5)--(-.75,1);
\draw (0,.5)--(.75,1);
\draw (-.75,1)--(.75,1.5);
\draw (.75,1)--(-.75,1.5);
\node at (-.75,1.75) {$g_1$};
\node at (.75,1.75) {$g_2$};
\draw (-.75,2)--(-.75,3);
\draw (.75,2)--(.75,2+2/3);
\draw (.75,2/3+2)--(2/3-.25,3);
\draw (.75,2/3+2)--(4/3-.25,3);
\node at (-.75,3.2) {$h_1$};
\node at (2/3-.25,3.2) {$h_2$};
\node at (4/3-.25,3.2) {$h_3$};
\end{tikzpicture}}
\newcommand{\compotwo}{\begin{tikzpicture}[baseline = .4cm]
\draw (0,0)--(0,.5);
\draw (0,.5)--(-.75,1);
\draw (0,.5)--(.75,1);
\draw (-.75,1)--(.75,1.5);
\draw (.75,1)--(-.75,1.5);
\node at (-.75,1.75) {$h_1g_1$};
\draw (.75,1.5)--(.75,2);
\draw (.75,2)--(0,2.5);
\draw (.75,2)--(1.5,2.5);
\node at (-.25,2.75) {$h_2\alpha_0(g_2)$};
\node at (1.75,2.75) {$h_3\alpha_1(g_2)$};
\end{tikzpicture}}
\newcommand{\compothree}{\begin{tikzpicture}[baseline = .4cm]
\draw (0,0)--(0,.25);
\draw (0,.25)--(-2,1.25);
\draw (0,.25)--(2, 1.25);
\draw (-1,.75)--(0,1.25);
\draw (-2,1.25)--(0,2);
\draw (0,1.25)--(2,2);
\draw (2,1.25)--(-2,2);
\node at (-2,2.2) {$h_1g_1$};
\node at (0,2.2) {$h_2\alpha_0(g_2)$};
\node at (2,2.2) {$h_3\alpha_1(g_2)$};
\end{tikzpicture}}
$$\hforest \circ \Ysigmag = \compoone = \compotwo$$
which is equal to
$$
\compothree$$
Elements of the fraction group of this category (at the object 1) is the set of (equivalence classes of) pairs of decorated trees as defined above.
Note that the definition of the composition of the morphisms of $\cC_\Phi$ provides a number of skein-type relations such as the one of above (which consists in pulling a string around a vertex like the Yang-Baxter relation in a fusion category). We thus have a number of such skein relations that are automatically satisfied.

We are going to show that the fraction group of $(\cC_\Phi,1)$ is isomorphic to the semidirect product $X_\Phi\rtimes V= \varinjlim_{t\in\fT}\Ga_t\rtimes V$ by constructing an explicit isomorphism.
Consider an element $\ga$ of the fraction group of $(\cC_\Phi,1).$
It is the equivalence class of a pair of triples $( (t, \tau, g) , (s,\sigma,h))$ where $t,s$ are trees with $n$ leaves, $\tau,\sigma$ are permutations in $S_n$ and $g=(g_1,\cdots, g_n), h=(h_1,\cdots, h_n)$ are $n$-tuples of elements of $\Ga$, for a certain $n\geq 1.$
We may write it as a fraction $\dfrac{g \circ \tau\circ t}{ h \circ \sigma \circ s}$.
Note that $g$ and $\tau$ are automorphisms of $\cC_\Phi$ and thus we can multiply the numerator and denominator of the fraction by $(g\circ \tau)^{-1}.$ 
We obtain that $\ga=\dfrac{t}{\tau^{-1}\circ g^{-1}h\circ \sigma\circ s} = \dfrac{t}{h'\circ \sigma'\circ s}$ where $h'$ is obtained by permuting the entries of $g^{-1}h$ via the permuation $\tau^{-1}$ and $\sigma'=\tau^{-1}\sigma.$
Consider the map 
$$\ga= \dfrac{t}{h'\circ \sigma'\circ s} \mapsto \left( (h',t) , \dfrac{t}{\sigma' \circ s} \right) \in \Ga_t\times V.$$
This formula defines an isomorphism from the fraction group $G_{\cC_\Phi}$ onto the semidirect product $\left(\varinjlim_{t\in\fT}\Ga_t\right)\rtimes V$.


\subsection{Thompson's group: automorphisms and slopes}

\subsubsection{Slopes}
Consider $v\in V$ and note that for any $x\in [0,1)$ there exists a sdi $I$ on which $v$ is adapted to $I$ (is affine on this interval) with slope $2^n$ for a certain $n\in\Z.$
We write $v'(x)=2^n$ for the slope of $v$ at the point $x$.
Observe that $v(I)$ is also a sdi and we can find two words $m_I$ and $m_{v(I)}$ in 0,1 satisfying that 
$$I=\{ m_I\cdot x:\ x\in\is\} \text{ and } v(I)=\{m_{v(I)}\cdot x:\ x\in\is\}$$
where we view $I$ and $v(I)$ inside Cantor space $\fC$.
Recall that $|m_I|$ is the number of letters in the word $m_I$, i.e.~the word length of the word $m_I$.
One can see that $n= |m_I|-|m_{v(I)}|$. In this way we can alternatively define the slope of $v\in V$ at any element of $\fC.$
We obtain that $v':\fC\to \Z, x\mapsto v'(x)$ is continuous and equivalently there exists a sdp $(I_1,\cdots,I_n)$ such that $v'$ is constant on each $I_k, 1\leq k\leq n.$

In order to have slightly lighter notations we may remove parenthesis and write $vx$ and $vI$ rather than $v(x)$ and $v(I)$ for $v\in V, x\in\Q_2$ and $I$ any subset of $\Q_2$ or $\fC$.

It is easy to see that the slope of elements of $V$ satisfies the chain rule:
$$(vw)'(x) = v'(wx)\cdot w'(x) \text{ for all } v,w\in V, x\in [0,1) \text{ or in } \fC.$$
This will be often used. We will consider the map 
$$\ell:V\to \prod_{\Q_2}\Ga,\ v\mapsto \ell_v$$
such that 
$$\ell_v(x):= \log_2(v'(v^{-1}x)) \text{ for all } v\in V, x\in\Q_2$$
where $\log_2$ is the logarithm in base $2$.
Note that the chain rule implies that $\ell_{vw} = \ell_v + \ell_w^v$ where $\ell_w^v(x):= \ell_w(v^{-1}x), x\in \fC, v,w\in V.$

Consider $x\in\fC$ and the stabiliser subgroup
$$V_x:=\{v\in V:\ vx=x\}.$$
Denote by $V_x'$ the derived subgroup of $V_x$ (the subgroup generated by the commutators).
We provide a description of $V_x'$ and $V_x/V_x'$ using slopes. This is rather standard. A full proof can be found in \cite{Bleak-Lanoue10}. We provide a short proof for the convenience of the reader.
Recall that the group of germs of $V$ at a point $x\in\fC$ is the group $V_x$ quotiented by the subgroup $W_x$ of elements of $V_x$ that acts like the identity on a neighborhood of $x$.

\begin{lemma}\label{lem:slopeone}
Fix $x\in \Q_2$ and consider $V_x$ and its derived subgroup $V_x'$.
We have that $V_x'$ is the subgroup of $v\in V$ such that $v$ acts like the identity on a neighborhood of $x$ inside $\fC$ and equivalently  $$V_x'=\{v\in V:\ vx=x \text{ and } v'(x)=1\}.$$
The abelianisation $V_x/V_x'$ of $V_x$ is thus equal to the group of germs of $V$ at the point $x$.
Moreover, the morphism $V_x\to \Z, v\mapsto \ell_v(x)$ factorises into an isomorphism
$V_x/V_x'\to\Z.$
\end{lemma}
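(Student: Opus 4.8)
The plan is to prove the three assertions in order: first the characterization of $V_x'$, then the identification of the abelianization with the group of germs, and finally that $v\mapsto\ell_v(x)$ induces an isomorphism $V_x/V_x'\to\Z$. Throughout I write $W_x$ for the subgroup of $v\in V_x$ that act as the identity on a neighborhood of $x$ in $\fC$.

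\textbf{Step 1: $V_x'\subseteq W_x = \{v: vx=x,\ v'(x)=1\}$.} First I would note that the two descriptions of $W_x$ agree: if $v$ fixes $x$ and has slope $1$ at $x$, then since $v$ is affine on a standard dyadic interval $I\ni x$ with $vI\ni x$ and $|m_I|=|m_{vI}|$, the interval $vI$ must actually equal $I$ (a dyadic interval is determined by its length and the point $x$ it contains, once $v$ maps one onto the other affinely and fixes $x$), hence $v$ is the identity on $I$. The converse is immediate. Now the key point: the assignment $v\mapsto \ell_v(x)=\log_2(v'(x))$ is a group homomorphism $V_x\to\Z$ by the chain rule $(vw)'(x)=v'(wx)w'(x)=v'(x)w'(x)$ (using $wx=x$), and $\Z$ is abelian, so $V_x'$ lies in its kernel, which is exactly $W_x$ by the equivalence just established. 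Thus $V_x'\subseteq W_x$.

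\textbf{Step 2: $W_x\subseteq V_x'$.} This is the main obstacle and the only part requiring real work. I would argue that any $v\in W_x$, being the identity on some dyadic interval $I\ni x$, is supported on the complement $\fC\setminus I$, which is a finite disjoint union of dyadic intervals; so $v$ lies in a copy of $V$ (acting on $\fC\setminus I$) inside $V_x$. Since $V$ is simple and in particular perfect, $v$ is a product of commutators of elements supported in $\fC\setminus I$ — but such commutators lie in $V_x$ (indeed in $W_x$), so $v\in V_x'$. Care is needed because one must know each commutator factor can be taken inside $W_x\subseteq V_x$; this follows since elements supported away from $I$ automatically fix a neighborhood of $x$. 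Combining Steps 1 and 2 gives $V_x'=W_x$, which is the first claim, and hence $V_x/V_x' = V_x/W_x$ is the group of germs of $V$ at $x$, the second claim.

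\textbf{Step 3: the isomorphism $V_x/V_x'\to\Z$.} The homomorphism $\ell_\bullet(x):V_x\to\Z$ has kernel $W_x=V_x'$ by Step 1, so it descends to an injection $V_x/V_x'\hookrightarrow\Z$. For surjectivity I would exhibit an explicit $v\in V_x$ with $v'(x)=2$, for instance an element supported near $x$ that doubles the slope at $x$ (sending a dyadic interval $I\ni x$ of length $2^{-k}$ affinely onto a subinterval of length $2^{-k-1}$ containing $x$, rearranging the freed-up dyadic pieces elsewhere); then $\ell_v(x)=1$ generates $\Z$. This completes the proof. The genuinely nontrivial input is the perfectness of $V$ invoked in Step 2; everything else is bookkeeping with dyadic intervals and the chain rule.
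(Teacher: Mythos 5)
Your proof is correct and follows essentially the same route as the paper: both arguments reduce the containment $\ker(\ell_\bullet(x))\subseteq V_x'$ to the perfectness of the group of elements of $V$ supported away from a standard dyadic neighbourhood of $x$ (which is again isomorphic to $V$), and both establish surjectivity of $\ell_\bullet(x)$ by exhibiting an explicit element of $V_x$ of nontrivial slope at $x$. The only cosmetic difference is that the paper packages the perfectness argument as $\ker(L)'=\ker(L)$ via an increasing union of $\Fix_V(I)$'s, while you write a given $v$ directly as a product of commutators; these are the same idea.
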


\begin{proof}
Fix $x\in \Q_2$ and consider the map 
$$L:V_x\to \Z, v\mapsto \ell_v(x):=\log_2(v'(x)).$$
This is a group morphism. 
Note that since $\Z$ is abelian the kernel of $L$ contains the derived subgroup $V_x'$, i.e.~$V_x'\subset \ker(L).$
For the reverse inclusion observe that
$\ker(L)=\cup_I \Fix_V(I)$ where $I$ runs over all sdi containing $x$ and $\Fix_V(I)$ is the subgroup of $v\in V$ satisfying $vy=y$ for all $y\in I$.
It is easy to see that $\Fix_V(I)$ is isomorphic to $V$ if $I$ is a proper sdi of $\fC$ and thus is simple and nonabelian since $V$ is.
Therefore, $\Fix_V(I)'=\Fix_V(I)$ implying that $\ker(L)'=\ker(L)$ and thus $\ker(L)=\ker(L)'\subset V_x'$.
We deduce that $\ker(L)=V_x'$.
To show that $L$ is surjective it is sufficient to observe that for any $x\in \Q_2$ there exists a sdi $I$ starting at $x$ and $v\in V$ adapted to $I$ so that $v(I)$ is the first half of $I$.
Hence, $vx=x$ and $v'(x)=1/2$. In particular, $L(v)=-1$ which implies that $L$ is surjective.
This finishes the proof of the lemma.
\end{proof}

\subsubsection{Automorphism group of $V$}
Let $\Aut(V)$ be the automorphism group of $V$.
Recall that $V$ is defined as a subgroup of the homeomorphism group $\Homeo(\fC)$ of Cantor space $\fC$.
Define $$\NCV:=\{\varphi\in\Homeo(\fC):\ \varphi V\varphi^{-1} = V\}$$
the normaliser subgroup of $V$ inside $\Homeo(\fC)$.
We have a map:
$$\ad:\NCV\to \Aut(V), \varphi\mapsto \ad_\varphi \text{ defined as } \ad_\varphi(v):=\varphi v\varphi^{-1},$$ 
for all $\varphi\in\NCV, v\in V.$
A classical argument using Rubin's theorem implies that $\ad$ is an isomorphism \cite{Rubin96}, see \cite[Section 3]{BCMNO19} for details and a proof of the faithfulness of $\ad$.

We will study elementary properties of elements of $\NCV$.

\begin{lemma}
If $\varphi\in \NCV$ and $I$ is a sdi, then $\varphi(I)$ is a finite union of sdi.
\end{lemma}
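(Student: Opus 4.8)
The plan is to use the fact that a sdi $I$ is a clopen subset of $\fC$ which is \emph{minimal} among nonempty clopen sets with respect to a certain dynamical property, and that $\varphi$ being in $\NCV$ forces it to send such sets to finite unions of sdi. Concretely, first I would recall that any sdi $I$ is clopen in $\fC$ and that $\varphi$, being a homeomorphism, sends $I$ to a clopen set $\varphi(I)$. Since the sdi form a basis of clopen sets for the topology of $\fC$, every clopen subset of $\fC$ is a finite (disjoint) union of sdi; so $\varphi(I)$ is automatically a finite union of sdi, and in fact the statement is true for \emph{any} homeomorphism of $\fC$, not just those normalising $V$.

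**Key steps.** I would carry this out as follows. (1) Observe $I$ is clopen: it is open by definition of the topology (it is a basic open set determined by a finite prefix $m_I$), and its complement $\fC\setminus I=\bigcup_{|m|=|m_I|,\,m\neq m_I}\{m\cdot x: x\in\is\}$ is a finite union of basic opens, hence open, so $I$ is closed too. (2) Since $\varphi$ is a homeomorphism, $\varphi(I)$ is clopen. (3) Invoke compactness: $\varphi(I)$ is covered by basic clopen sets (sdi), and being compact it admits a finite subcover; by splitting sdi along their defining words one can refine this to a \emph{disjoint} finite union of sdi, say $\varphi(I)=I_1\sqcup\cdots\sqcup I_k$. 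This is exactly the claim. Optionally, if one wants to use the hypothesis $\varphi\in\NCV$ nontrivially, one notes that $\varphi$ maps the standard dyadic partition structure to another partition of $\fC$ into clopen pieces on which conjugates of $V$-elements act affinely, but this is not needed for the bare statement.

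**Main obstacle.** Frankly there is no serious obstacle here: the only mild point to get right is the passage from "open cover by sdi" to a \emph{finite} and \emph{disjoint} union, which uses compactness of the clopen set $\varphi(I)$ (a closed subset of the compact space $\fC$) together with the elementary combinatorial fact that two sdi are either nested or disjoint, so any finite union of sdi can be rewritten as a disjoint union of sdi of equal (maximal) word length. I expect the author's proof to be a one- or two-line argument along these lines, and the hypothesis $\varphi\in\NCV$ is likely included only because the lemma is the first in a sequence of statements about $\NCV$ where later members genuinely need it.
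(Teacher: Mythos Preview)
Your proof is correct, and indeed more general than needed: as you observe, every clopen subset of $\fC$ is a finite disjoint union of sdi by compactness together with the fact that the sdi form a basis of clopen sets, so the conclusion holds for any homeomorphism $\varphi\in\Homeo(\fC)$.

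The paper's argument is genuinely different. It characterises the set $\cI$ of finite unions of sdi dynamically: the map $v\mapsto\Fix(v)$ is a surjection from $V$ onto $\cI$. Then for $\varphi\in\NCV$ one has $\varphi(\Fix(v))=\Fix(\varphi v\varphi^{-1})$, and since $\varphi v\varphi^{-1}\in V$ this is again in $\cI$. So the paper actually uses the hypothesis $\varphi\in\NCV$, whereas your argument does not. Your route is more elementary and yields a stronger statement; the paper's route ties the lemma to the $V$-action via fixed-point sets, which is thematically closer to the techniques used in the subsequent propositions (germs, stabilisers, slopes), though for this particular lemma your topological shortcut is the cleaner choice.
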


\begin{proof}
Write $\cI$ for the set of finite unions of sdi. 
To each $v\in V$ we associate $\Fix(v)$ the set of $x\in\fC$ so that $vx=x.$
It is easy to see that $v\in V\mapsto \Fix(v)$ is a surjection from $V$ onto $\cI$.
Moreover, if $\varphi\in\NCV$, then $\Fix(\ad_\varphi(v)) = \Fix(\varphi v\varphi^{-1}) = \varphi(\Fix(v)).$
This implies that if $I\in\cI$, then $\varphi(I)\in\cI$ proving the lemma.
\end{proof}

In this paper we will be working with a subgroup of $\NCV$.
Let $\SNQ$ be the set of $\varphi\in\NCV$ stabilising $\Q_2$ inside $\fC$ that is:
$$\SNQ:=\{\varphi\in\Homeo(\fC):\ \varphi V\varphi^{-1} = V \text{ and } \varphi(\Q_2)=\Q_2\}.$$
Recall that $\Q_2\subset\fC$ is the set of sequences $x=(x_n)_{n\in\N}\in\is$ satisfying that there exits $N\geq 1$ such that $x_n=0$ for all $n\geq N$.

\begin{remark}
In general, an element of $\NCV$ does not stabilise $\Q_2.$
Consider for instance $x=(x_n)_{n\in\N}\mapsto (\ov{x_n})_{n\in\N}$ where $\ov 0=1, \ov 1=0.$
It is an element of $\NCV$ sending all stationary sequences eventually equal to 0 (so $\Q_2$) to the the space of all stationary sequences eventually equal to 1 (that is the other copy of the dyadic rationals inside Cantor space).

There exist more exotic elements of $\NCV$ which do not stabilise the union of the copies of the dyadic rationals inside Cantor space, see Remark \ref{rem:Shayo}.
\end{remark}

We have the following useful fact for elements of $\SNQ$. 

\begin{proposition}\label{prop:slope}
If $x\in \Q_2, v\in V$ satisfying $vx=x$ and $\varphi\in\SNQ$, then
$$(\varphi v\varphi^{-1})'(\varphi(x)) = v'(x).$$
\end{proposition}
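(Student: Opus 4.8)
The strategy is to reduce the statement to the purely local picture near the fixed point $x$, namely the group of germs of $V$ at $x$, and then use Lemma \ref{lem:slopeone} to convert the slope into an algebraic invariant that $\ad_\varphi$ must respect. First I would note that since $v\in V_x$, we have $\ad_\varphi(v)=\varphi v\varphi^{-1}\in V$ and moreover $\ad_\varphi(v)$ fixes $\varphi(x)$, because $\varphi v\varphi^{-1}(\varphi(x))=\varphi(vx)=\varphi(x)$. So $\ad_\varphi$ restricts to a map from $V_x$ into $V_{\varphi(x)}$, and since $\varphi^{-1}\in\SNQ$ as well (the conditions defining $\SNQ$ are symmetric under taking inverse homeomorphisms), this restriction is in fact an isomorphism $V_x\xrightarrow{\sim} V_{\varphi(x)}$. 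Hence it descends to an isomorphism of abelianisations $V_x/V_x' \xrightarrow{\sim} V_{\varphi(x)}/V_{\varphi(x)}'$.

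Next I would invoke Lemma \ref{lem:slopeone}: for any $y\in\Q_2$ the map $L_y\colon V_y\to\Z,\ w\mapsto \ell_w(y)=\log_2(w'(y))$ factors through an isomorphism $V_y/V_y'\xrightarrow{\sim}\Z$. Note $\varphi(x)\in\Q_2$ precisely because $\varphi\in\SNQ$ stabilises $\Q_2$ — this is where that hypothesis is used. Now consider the composite isomorphism
$$
\Z \;\xleftarrow[\ \sim\ ]{L_x}\; V_x/V_x' \;\xrightarrow[\ \sim\ ]{\ad_\varphi}\; V_{\varphi(x)}/V_{\varphi(x)}' \;\xrightarrow[\ \sim\ ]{L_{\varphi(x)}}\; \Z .
$$
Any automorphism of $\Z$ is $\pm\id$, so for every $v\in V_x$ we get $\log_2\big((\varphi v\varphi^{-1})'(\varphi(x))\big)=\pm\log_2(v'(x))$, with the sign independent of $v$. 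To pin the sign to $+1$, it suffices to exhibit one element $v_0\in V_x$ with $v_0'(x)=1/2$ whose image has slope $1/2$ (not $2$) at $\varphi(x)$; equivalently, it suffices to rule out that $\ad_\varphi$ inverts the germ group. This I would do by a monotonicity/orientation argument: choose $v_0$ adapted to a sdi $I$ starting at $x$ with $v_0(I)$ the first half of $I$ (as in the proof of Lemma \ref{lem:slopeone}), so $v_0$ "contracts toward $x$" on $I$; since $\varphi$ is a homeomorphism of $\fC$ sending sdi to finite unions of sdi and respecting the "contracting toward $\varphi(x)$" behaviour on a neighbourhood, $\varphi v_0\varphi^{-1}$ still contracts toward $\varphi(x)$, forcing its slope there to be $\le 1$, hence $=1/2$, hence the sign is $+1$.

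The main obstacle is precisely this last sign-fixing step: a priori $\varphi$ need not be monotone for the lexicographic order, so one cannot just say "$\varphi$ preserves orientation." The clean way around it is to work intrinsically with the group of germs: the germ group $V_x/V_x'$ carries a canonical ordered-group structure (a germ is positive if it is represented by some $w$ with $w'(x)<1$, i.e. $w$ locally pushes mass away from $x$), and this positive cone is definable purely from the dynamics of the $V_x$-action on $\fC$ near $x$ — it is the set of germs of elements that have a fixed sdi neighbourhood $I\ni x$ on which they are adapted and map $I$ strictly inside itself toward $x$. Since $\ad_\varphi$ is a conjugation by a homeomorphism, it carries this dynamically-defined positive cone to the corresponding one at $\varphi(x)$, so the induced isomorphism $\Z\to\Z$ is order-preserving, hence $+\id$. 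Putting the two displayed facts together gives $\log_2\big((\varphi v\varphi^{-1})'(\varphi(x))\big)=\log_2(v'(x))$ for all $v\in V_x$, which is the claimed equality of slopes. Alternatively, and perhaps more in the spirit of the paper, one can avoid the ordered-group discussion entirely by noting that $\varphi$ restricted to a suitable sdi neighbourhood of $x$ is itself (the Cantor-space realisation of) an element of $V$ composed with the identity on that piece — more precisely $\varphi$ agrees on some sdi $I\ni x$ with a homeomorphism that maps $I$ onto a finite union of sdi in an order-preserving way because $\varphi\in\SNQ$ forces the images of the two $\fC$-preimages of the dyadic point $x$ to again be the two preimages of the dyadic point $\varphi(x)$ in the correct (left/right) order — from which the slope computation $n=|m_I|-|m_{\varphi v\varphi^{-1}(\varphi(I))}|$ in the definition of the slope can be carried out directly and matched against $v'(x)$.
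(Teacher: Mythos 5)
Your proposal is correct and follows essentially the same route as the paper: reduce via Lemma \ref{lem:slopeone} to an automorphism of $\Z$, which is $\pm\id$, and then pin the sign using the dynamical characterization of $v'(x)<1$ (namely that iterates of $v$ on a small sdi containing $x$ converge to $x$, a property preserved by conjugation by any homeomorphism). The only blemish is the slip ``$w'(x)<1$, i.e.\ $w$ locally pushes mass away from $x$'' — that condition means $w$ contracts toward $x$ — but your subsequent description of the positive cone is the correct one and matches the paper's argument.
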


\begin{proof}
Consider $x\in \Q_2, v\in V_x$ and $\varphi\in\SNQ.$

We start by showing that if $v'(x)<1$, then $(\varphi v\varphi^{-1})'(\varphi(x))<1.$
Define $I,J$ some sdi containing $x$ and $\varphi(x)$ that are adapted to $v$ and $\varphi v\varphi^{-1}$, respectively.
Note that since $x\in\Q_2$ the sdi $I$ is necessarily of the form $[x,x+a)$ and the restriction of $v$ to this interval acts in the following way: $x+b\mapsto x + v'(x)b.$
We have a similar description of $J$ and the restriction of $\varphi v\varphi^{-1}$ to $J$ since $\varphi(x)\in\Q_2,\varphi v\varphi^{-1}\in V$ and $(\varphi v\varphi^{-1})(\varphi(x)) = \varphi(x).$
Up to reducing $J$ we can assume that $\varphi(I)$ contains $J$.
Assume that $v'(x)<1$ and observe that this condition is equivalent to have that $\lim_{n\to\infty} v^n(y)=x$ for all $y\in I$.
Consider $z\in J$ that we can write as $\varphi(y)$ for some $y\in I$.
By continuity of $\varphi$ we obtain that 
$$\lim_{n\to\infty} (\varphi v\varphi^{-1})^n (\varphi(y)) = \varphi(\lim_{n\to\infty}v^n(y))= \varphi(x)$$ implying that $(\varphi v\varphi^{-1})'(\varphi(x))<1.$

We finish the proof using the group of germs $V_x/V_x'$.
Since $x\in \Q_2$ we have by Lemma \ref{lem:slopeone} that $L_x:V_x\to \Z, v\mapsto \log_2(v'(x))$ factorises into an isomorphism $\ov L_x:V_x/V_x'\to \Z.$
Since $\varphi\in \SNQ$ we have that $\varphi(x)\in\Q_2$ and thus an isomorphism $\ov L_{\varphi(x)}:V_{\varphi(x)}/V_{\varphi(x)}'\to\Z$.
Consider the automorphism $\ad_\varphi$ of $V$ and observe that $\ad_\varphi(V_x)=V_{\varphi(x)}$.
Hence, $\ad_\varphi$ factorizes into an isomorphism $\ov\ad_\varphi:V_x/V_x'\to V_{\varphi(x)}/V_{\varphi(x)}$.
We obtain an automorphism 
$$f:=\ov L_{\varphi(x)} \circ \ov\ad_\varphi \circ {\ov L_x}^{-1}\in\Aut(\Z).$$
Therefore, $f=\pm \id_\Z$.
We proved that if $v'(x)<1$, then $(\varphi v\varphi^{-1})'(\varphi(x))<1$.
Therefore, if $n$ is a negative integer, then so is $f(n)$. 
This implies that $f$ is the identity automorphism and thus $v'(x)=(\varphi v\varphi^{-1})'(\varphi(x))$.
\end{proof}

\begin{remark}\label{rem:Shayo}
Recall that Cantor space contains two copies of the dyadic rationals that we denote by $\Q_2$ and $\Q_2^1$.
The last proposition works for $\varphi\in\SNQ$ but we can relax the assumptions and only require that $\varphi$ stabilises $\Q_2\cup \Q_2^1$.
Although, by adapting the proof of above we obtain that, if $vx=x, v'(x)\neq 1,x\in\Q_2$ and $\varphi(\Q_2)\neq \Q_2,\Q_2^1$, then $(\varphi v\varphi^{-1})'(\varphi(x))\neq v'(x)$, see \cite[Proposition 1.2]{Brothier20-2} for a precise statement and a proof.
Feyisayo Olukoya constructed such an exotic example of $\varphi\in\NCV$ satisfying that $\varphi(\Q_2)$ does not intersect $\Q_2\cup \Q_2^1$.
This example was communicated to us by Collin Bleak and we warmly thank both of them for sharing it.
The construction of this exotic homeomorphism uses transducers that were developed in \cite{GNS00} and further used to describe and classify automorphisms of $V$ in \cite{BCMNO19}. 

Note that if $vx\neq x$, then the last proposition does not work for trivial reasons.
Indeed, consider the element $\varphi \in V$ which permutes cyclically the intervals $[0,1/2]$, $[1/2,3/4]$ and $[3/4,1]$ and $v\in V$ that permutes $[0,1/2]$ and $[1/2,1]$.
Note that $v'(x)=1$ for all $x\in\Q_2$ but $\varphi v\varphi^{-1}([0,1/2]) = \varphi v [3/4,1] = \varphi [1/4,1/2] = [5/8,6/8]$.
Hence, $\varphi v\varphi^{-1}$ has slope $1/4$ on the interval $[0,1/2]$ while $v$ has slope $1$ everywhere.
\end{remark}

\section{General properties of fraction groups constructed from forests}\label{sec:characteristic}

In this section we consider \textit{any} monoidal covariant functor $\Phi$ from the category of forests $\cF$ to the category of groups $\Gr$ before restricting into a smaller class of functors in the coming sections.
As we have seen in the previous section $\Phi$ is totally described by the choice of a group $\Ga$ and a morphism 
$$\Ga\to\Ga\oplus\Ga, \ g\mapsto (\al_0(g),\al_1(g))$$
with $\al_0,\al_1 \in \End(\Ga)$.
To emphasize the choice of the pair $(\al_0,\al_1)$ we write $\Phi_\al$ for $\Phi$.
Denote by $K_\al := \varinjlim_{t\in\fT} \Ga_t$ the associated limit group, $\pi_\al: V\act K_\al$ the Jones action and $G_\al:= K_\al\rtimes V$ the associated semidirect product.
Finally write $\cC_\al$ for the larger category of forests with leaves decorated by elements of $\Ga$ and natural numbers satisfying that $G_\al$ is the fraction group of the category $\cC_\al$ at the object $1.$
The aim of this section is to prove the following:

\begin{theorem}\label{theo:KinG}
The normal subgroup $K_\al\lhd G_\al$ is characteristic of $G_\al$, i.e.~any automorphism of $G_\al$ restricts to an automorphism of $K_\al.$

Using obvious notations, consider an isomorphism $\theta:G_\al\to G_\beta$ between two fraction groups. We have that $\theta(K_\al)=K_\beta.$
\end{theorem}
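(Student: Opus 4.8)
The statement is that $K_\al$ is characteristic in $G_\al$, and more generally that any isomorphism $\theta:G_\al\to G_\beta$ carries $K_\al$ onto $K_\beta$. The natural strategy is to give an \emph{abstract group-theoretic characterization} of $K_\al$ inside $G_\al$ that is manifestly preserved under group isomorphisms; then both assertions follow at once (the first being the special case $\beta=\al$). Since $K_\al$ is a (restricted) direct limit of direct sums $\Ga_t\cong\Ga^{|\Leaf(t)|}$, the feature I would try to exploit is that $K_\al$ is, in a suitable sense, a ``large locally decomposable'' normal subgroup, whereas the quotient $V$ is simple (or at least has no such decomposable normal subgroups), and $V$ acts on $K_\al$ without large fixed parts.

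Concretely, here is the sequence of steps I would carry out. \emph{Step 1.} Record the basic structure: $K_\al\lhd G_\al$ is normal, $G_\al/K_\al\cong V$ is simple, and $V$ is generated by torsion elements (indeed every element of $V$ has a power that is trivial on a sub-interval; more to the point $V$ has no nontrivial finite or abelian quotients, and no nontrivial normal subgroups). \emph{Step 2.} Introduce a group-theoretic property $P$ that $K_\al$ satisfies and that no larger normal subgroup of $G_\al$ can satisfy --- for instance: ``$N\lhd G_\al$ is a directed union of subgroups each of which splits as a direct product of two nontrivial normal-in-$N$ subgroups with full support,'' or more simply the property that $N$ is \emph{locally finite-by-(direct sum)} / has ``no simple large section like $V$.'' The cleanest choice, which I expect the author uses, is: $K_\al$ is the unique maximal normal subgroup of $G_\al$ all of whose finitely generated subgroups embed in a group of the form $\Ga^n$ (equivalently, $K_\al$ has a certain local-decomposability property), combined with the fact that any normal subgroup of $G_\al$ strictly containing $K_\al$ surjects onto a nontrivial normal subgroup of $V$, hence onto $V$ itself, which destroys that property. \emph{Step 3.} Verify $K_\al$ has property $P$: this is immediate from $K_\al=\varinjlim_{t\in\fT}\Ga_t$ and the explicit directed system. \emph{Step 4.} Verify maximality: take $N\lhd G_\al$ with $N\supsetneq K_\al$; then $N/K_\al$ is a nontrivial normal subgroup of $V$, so $N/K_\al=V$, i.e. $N=G_\al$; and $G_\al$ itself fails $P$ because it surjects onto the simple nonamenable group $V$, which does not have property $P$ (no group with property $P$ can have $V$ as a quotient). \emph{Step 5.} Conclude: since $P$ is defined purely in terms of the abstract group structure, any isomorphism $\theta:G_\al\to G_\beta$ sends the unique maximal-$P$ normal subgroup $K_\al$ to the unique maximal-$P$ normal subgroup $K_\beta$; taking $\beta=\al$ gives that $K_\al$ is characteristic.

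The main obstacle is isolating the \emph{right} abstract property $P$: it must be (a) genuinely intrinsic (invariant under abstract isomorphism), (b) shared by $K_\al$ for \emph{every} pair $\al=(\al_0,\al_1)$ --- in particular I should not assume $\Ga$ finitely generated, abelian, or anything special --- and (c) strong enough that $V$, and hence any normal subgroup of $G_\al$ projecting onto $V$, provably fails it. Part (c) is where the simplicity/non-amenability and the absence of decomposable normal subgroups of $V$ must be used carefully; a subtlety is that $V$ \emph{does} contain direct-product subgroups (e.g. $\Fix_V(I)\times\Fix_V(J)$ for disjoint $I,J$), so ``locally decomposable'' alone is not enough --- one needs the decompositions to be uniform/cofinal in a way $V$ cannot match, or one phrases $P$ as ``$N$ has no section isomorphic to $V$'' together with local decomposability. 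Once $P$ is pinned down, Steps 3--5 are routine, so essentially all the work is in the formulation of $P$ and in checking that $V$ violates it.
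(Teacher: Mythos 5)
You have correctly identified the paper's \emph{meta-strategy}: exhibit a purely group-theoretic property $P$ that $K_\al$ satisfies, that no normal subgroup of $G_\al$ not contained in $K_\al$ satisfies, and deduce both claims at once. You also correctly invoke simplicity of $V$ to handle normal subgroups strictly containing $K_\al$. However, the proposal stops short of an actual proof at exactly the point you flag as ``the main obstacle,'' and the candidates you sketch for $P$ have concrete defects.

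First, your most explicit candidate --- ``all finitely generated subgroups embed in $\Ga^n$'' --- is not intrinsic to $G_\al$: it refers to $\Ga$, which for an isomorphism $\theta:G_\al\to G_\beta$ may be built from two a priori unrelated input groups, so the property is not a single invariant notion one can transport along $\theta$. Worse, even read inside one $G_\al$, the implication ``groups with $P$ cannot surject onto $V$'' fails in general: if $\Ga$ happens to contain an isomorphic copy of $V$ (which the paper permits), then $V$ itself has all its finitely generated subgroups inside $\Ga^n$, so the candidate $P$ does not block $V$-quotients. Your vaguer candidates (``locally decomposable with full support,'' ``locally finite-by-(direct sum)'') are left at the level of slogans, and you yourself note the subtlety that $V$ contains $\Fix_V(I)\times\Fix_V(J)$, which undercuts any na\"ive decomposability criterion.

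Second, the step you call ``routine once $P$ is pinned down'' is in fact the substantive content of the paper's proof, and it does not proceed by showing $V$ fails $P$. The property actually used is the \emph{decomposability property}: $N\lhd G$ satisfies it if $N=A\oplus B$ and $N$ equals the normal closure in $G$ (not in $N$) of each of $A$ and $B$. This couples the internal direct-sum structure of $N$ to the $G$-conjugation action, which is precisely what lets the argument use the large $V$-part of $G_\al$. The verification that $K_\al$ is the unique maximal normal subgroup with this property is not a one-liner about $V$-quotients; it is Proposition \ref{prop:isom}, whose parts (1)--(3) work directly inside $G_\al$ to show that any normal subgroup $\ti K$ with the decomposability property and not contained in $K_\al$ must contain, in each of its two factors $A$ and $B$, a common explicit nontrivial element $y=(g,g^{-1},e,e,g^{-1},g,e,e)$ for a suitable $g$ --- contradicting $A\cap B=\{e\}$. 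Your Step 4 argument, which only considers $N\supsetneq K_\al$, establishes that $K_\al$ is maximal among $P$-subgroups, but \emph{uniqueness} of the maximal element (needed in your Step 5) requires ruling out $P$-subgroups incomparable to $K_\al$, which your sketch does not do and which the paper handles by the direct containment $\ti K\subset K_\al$. So the proposal is a plausible plan with the correct strategic shape, but the crucial combinatorial lemma --- the formulation of the decomposability property and the proof of its uniqueness --- is absent, and the specific candidates you float for it would not succeed.
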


To prove the theorem we are going to show that $K_\al$ is the unique maximal normal subgroup of $G_\al$ satisfying a certain decomposability property.

\begin{definition}
Consider a normal subgroup $N\lhd G$.
If $X\subset N$ is a subset, then we write $\cN_N(X)$ for the smallest normal subgroup of $N$ containing $X$ and call it the normaliser of $X$ inside $N$.
A normal subgroup $N\lhd G$ satisfies the \textit{decomposability property} if:
\begin{enumerate}
\item $N$ can be decomposed as a direct sum of two groups $N=A\oplus B$;
\item $N=\cN_G(A) = \cN_G(B)$.
\end{enumerate}
\end{definition}

Denote by $t_n, n\geq 1$ the tree with $2^n$ leaves all at distance $n$ from the root.
For example, $t_1=Y$ and $t_2=(Y\ot Y)\circ Y.$
Consider the permutations $\sigma=(21), \tau=(2134)$ (here we write $(a_1\cdots a_n)$ for the permutation $i\mapsto a_i$) and the elements $v_\sigma := \dfrac{\sigma t_1}{t_1}, v_\tau:=\dfrac{\tau t_2}{t_2}$. 
Note that in the first case we permute the two leaves of $t_1$ as in the second case we permute the two first leaves of $t_2$ and let invariant the two others.
The permutations $\sigma,\tau$ induce some permutation $\sigma_n,\tau_n$ on $2^n,n\geq 2$ elements that are 
$$\sigma_n(i) = i+2^{n-1} \mod 2^n$$ 
and 
$$\tau_n(i) = 
\begin{cases}
 i + 2^{n-2} \text{ if } 1\leq i\leq 2^{n-2} \\ 
 i - 2^{n-2}\text{ if } 2^{n-2}+1 \leq i \leq 2^{n-1} \\ 
 i \text{ if } 2^{n-1}+1 \leq i \leq 2^{n} \\ 
\end{cases}.
$$
More generally, for any permutation $\kappa\in S_{2^k},k\geq 1$ on $2^k$ elements and $n\geq k$ we can define a $2^{n-k}$-cable version $\kappa_n\in S_{2^n}$ of $\kappa$ that is:
$$\kappa_n(i + j 2^{n-k}) = i + \kappa(j) 2^{n-k} \text{ for } 1\leq i\leq 2^{n-k} \text{ and } 1\leq j\leq 2^{k}.$$
Identify the permutation $\kappa_n$ with the automorphism of the group $\Ga_{t_n}\simeq \Ga^{\oplus 2^n}$ that is permuting the coordinates.

\begin{definition}
Given any permutation $\kappa\in S_{2^k},k\geq 1$ and $n\geq k$ we consider the set 
$$X_{\kappa,n} = \{ g\kappa_n(g^{-1}) : \ g\in \Ga_{t_n}, \ \supp(g)\cap \kappa(\supp(g))=\emptyset \}$$
where $\supp(g)$ denotes the support of $g$, i.e.~the set of $1\leq i\leq 2^n$ for which the $i$-th component of $g$ is nontrivial.
\end{definition}

\begin{proposition}\label{prop:isom}
Consider a chain of nontrivial normal subgroups $L\lhd K\lhd G_\al$ and assume that $L$ is not contained inside $K_\al$. 
The following assertions are true.
\begin{enumerate}
\item For any permutation $\kappa\in S_{2^k}$ there exists $n_{\kappa,K}\geq 2$ such that if $n\geq n_{\kappa,K}$, then $X_{\kappa,n}$ is contained inside $K.$
\item Consider the following set $Y_n,n\geq 3$ of elements $y=(g, g^{-1} , e , e , g^{-1} , g , e , e )\in G_{t_n}$ for $g\in G_{ t_{n-3}}$ and where we identify $G_{t_n}$ with $G_{t_{n-3}}^8.$
There exists $n_{L,K}\geq 2$ such that if $n\geq n_{L,K}$, then $Y_n$ is contained inside $L.$
\item If $\ti K\lhd G_\al$ is a proper normal subgroup with the decomposability property, then $\ti K$ is contained inside $K_\al.$
\item The subgroup $K_\al\lhd G_\al$ is the unique maximal normal subgroup of $G_\al$ with the decomposability property.
In particular, it is a characteristic subgroup.
\end{enumerate}
\end{proposition}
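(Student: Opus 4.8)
The plan is to prove the four assertions in the order they are stated, since each feeds the next. For (1), I would start with a nontrivial $h$ in $K$; since $K \lhd G_\al$ and $K$ is not contained in... wait, actually (1) only requires $K$ nontrivial normal. I would take $h \in K \setminus \{e\}$ and hit it with conjugation by elements of $V$ and by elements of $K_\al$. A nontrivial element of $G_\al = K_\al \rtimes V$ writes as $a v$ with $a \in K_\al$, $v \in V$. Conjugating by a well-chosen element of $V$ supported on a small sdi, one produces a nontrivial \emph{element of $K_\al$} lying in $K$ (this is the standard commutator trick: $[h, w] \in K_\al$ whenever $w \in K_\al$, and one arranges it to be nontrivial using that the $V$-action moves supports around and $V$ acts highly transitively on sdi's). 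Once some nontrivial $g_0 \in K_\al \cap K$ is in hand, I would use normality of $K$ under $G_\al$ — in particular under $V$ and under diagonal/partial $\Ga$-translations coming from $K_\al$ — together with the high transitivity of the $V$-action on finite sets of sdi's, to conclude that the normal closure in $G_\al$ of $g_0$ contains every element of the form $g \kappa_n(g^{-1})$ with $g, \kappa_n(g)$ of disjoint support, once $n$ is large enough that there is room to embed the configuration; this gives $X_{\kappa,n} \subseteq K$ for $n \geq n_{\kappa,K}$. The role of "$g$ and $\kappa_n(g)$ have disjoint support" is exactly what makes $g\kappa_n(g^{-1})$ a commutator-type element that survives the normal-closure manipulation regardless of the (possibly noncommutative) structure of $\Ga$.

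For (2), I would run the same machine but starting from $L$ and crucially using the hypothesis that $L$ is \emph{not} contained in $K_\al$: picking $\ell \in L$ with nontrivial $V$-component, conjugating and commutating produces nontrivial elements of $L \cap K_\al$ living on arbitrarily deep trees, and then, as in (1), the $G_\al$-normal closure of such an element inside $L$ sweeps out the explicit configurations $y = (g, g^{-1}, e, e, g^{-1}, g, e, e)$ on $t_n$ for all large $n$. The point of this particular "doubled commutator" shape on $8$ blocks is presumably that it is simultaneously of disjoint-support type (so it lies in normal closures cheaply) \emph{and} it will later be recognizable inside any decomposition $N = A \oplus B$: no matter how $A$ and $B$ split the coordinates, such a $y$ has components in both summands, which is what forces a contradiction in (3).

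For (3): let $\ti K \lhd G_\al$ be a \emph{proper} normal subgroup with the decomposability property, $\ti K = A \oplus B$ with $\ti K = \cN_{G_\al}(A) = \cN_{G_\al}(B)$. Suppose for contradiction $\ti K \not\subseteq K_\al$. Apply (2) with $L = \ti K$, $K = \ti K$: for $n$ large, $Y_n \subseteq \ti K = A \oplus B$. Now I would argue that because $A$ and $B$ commute elementwise and $\ti K = \cN_{G_\al}(A)$, the decomposition $A \oplus B$ must be "geographic" — compatible with the coordinate/support structure of $K_\al$ up to the $V$-action — so that each $Y_n$-element, having entangled nontrivial components across the $8$ blocks, cannot lie in $A \oplus B$ unless $A$ (or $B$) already contains a full deep copy $\Ga_{t_m}$; pushing this around by $V$ and taking normal closure then forces $A \supseteq K_\al$ or $B \supseteq K_\al$, hence $\ti K \supseteq K_\al$ and in fact $\ti K = G_\al$ after one more step — contradicting properness. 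The cleanest route is probably: show $A$ and $B$ are themselves normal in $G_\al$ (since $\ti K = \cN_{G_\al}(A)$ and $A \lhd \ti K$... actually one shows $A, B \lhd G_\al$ because $G_\al$ permutes the direct summands and there are only two, with a small argument ruling out the swap), then show any nontrivial normal subgroup of $G_\al$ contained in $K_\al$ already contains some $X_{\kappa,n}$, and any one not contained in $K_\al$ contains some $Y_n$ and hence all of $K_\al$ — and two nontrivial $G_\al$-normal subgroups of $G_\al$ cannot have trivial-support-disjoint generating sets and still be complementary. Finally (4) is immediate from (3) once one checks $K_\al$ itself \emph{has} the decomposability property: take $A = \oplus_{x \in \text{left half}} \Ga$, $B = \oplus_{x \in \text{right half}} \Ga$ inside the inductive limit description of $K_\al$; then $K_\al = A \oplus B$, and since $V$ acts transitively enough on sdi's, $\cN_{G_\al}(A) \supseteq$ everything, giving $\cN_{G_\al}(A) = \cN_{G_\al}(B) = K_\al$. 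Maximality and characteristicness follow: any automorphism sends a maximal normal decomposable subgroup to one, and there is only one.

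The main obstacle I anticipate is (3): carefully extracting, from the purely algebraic data "$N = A \oplus B$ with $N = \cN_{G_\al}(A) = \cN_{G_\al}(B)$", the conclusion that the splitting is forced to be coordinate-wise (modulo $V$), and then using the explicit $Y_n$ to derive the contradiction. The delicate points are (a) ruling out that $G_\al$ conjugation swaps $A$ and $B$ rather than preserving each, and (b) handling the noncommutativity of $\Ga$ throughout — this is precisely why the statements are phrased with disjoint-support elements $g\kappa_n(g^{-1})$ and the doubled pattern $(g,g^{-1},e,e,g^{-1},g,e,e)$ rather than with arbitrary elements, and I would lean on that phrasing rather than fight it. Steps (1) and (2) I expect to be routine transitivity-plus-commutator arguments once the right conjugators in $V$ are written down.
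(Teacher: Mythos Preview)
Your plan has two genuine gaps, one in (1) and a more serious one in (3).

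\textbf{Gap in (1).} Your parenthetical ``actually (1) only requires $K$ nontrivial normal'' is false, and your strategy of first producing some $g_0\in K\cap K_\al$ and then taking its $G_\al$-normal closure cannot yield all of $X_{\kappa,n}$. Indeed, conjugating an element of $K_\al$ by elements of $G_\al$ only produces elements whose $\Ga$-components lie in the normal subgroup of $\Ga$ generated by the components of $g_0$ and their images under the $\al_i$; if that subgroup is proper you never reach $g\kappa_n(g^{-1})$ for arbitrary $g$. The paper instead uses that $K\not\subseteq K_\al$ (which follows from $L\not\subseteq K_\al$) together with the \emph{simplicity of $V$}: the image of $K$ in $V=G_\al/K_\al$ is a nontrivial normal subgroup, hence all of $V$, so for \emph{every} $w\in V$ there exists $b\in K_\al$ with $bw\in K$. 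Taking $w=v_\kappa$ gives $av_\kappa\in K$; a commutator $[y, \tilde a v_\kappa]$ with an \emph{arbitrary} $y\in\Ga_{t_n}$ of suitable support then equals $y\kappa_n(y)^{-1}$. The freedom in $y$ is what sweeps out all of $X_{\kappa,n}$, and it comes from the $V$-part of $av_\kappa$, not from any element of $K\cap K_\al$.

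\textbf{Gap in (3).} You propose to show $A,B\lhd G_\al$, but this is false: the decomposability condition says $\cN_{G_\al}(A)=\ti K$, i.e.\ the $G_\al$-normal closure of $A$ is $\ti K$, which precisely means $A$ is \emph{not} normal in $G_\al$ unless $A=\ti K$. Your subsequent ``geographic'' argument then has no foundation. The paper's argument is much shorter and is exactly why the hypothesis of (2) is a chain $L\lhd K\lhd G_\al$ rather than just $L\lhd G_\al$: since $A$ and $B$ commute, each is normal in $\ti K$; if $\ti K\not\subseteq K_\al$ then neither $A$ nor $B$ lies in $K_\al$ (otherwise $\ti K=\cN_{G_\al}(A)\subseteq K_\al$). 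Now apply (2) twice, once with $(L,K)=(A,\ti K)$ and once with $(L,K)=(B,\ti K)$: for large $n$ one gets $Y_n\subseteq A$ and $Y_n\subseteq B$, contradicting $A\cap B=\{e\}$. You applied (2) only with $L=\ti K$, which gives $Y_n\subseteq \ti K=A\oplus B$ and leads you into the unnecessary ``geographic'' detour. Part (4) is fine.
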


\begin{proof}
Proof of (1). 
Since $K$ is nontrivial and not contained inside $K_\al$ there exists $a\in K_\al$ and a nontrivial element $v\in V$ such that $av\in K$.
Since $V$ is simple this implies that for any $w\in V$ there exists $b\in K_\al$ such that $bw\in \cN_{G_\al}(av)$ and thus $bw$ is in $K$ since it is a normal subgroup.
Consider a permutation $\kappa$ on $2^k$ elements with $k\geq 1$ and the associated element $v_\kappa\in V.$
For $n$ large enough there exists $a\in \Ga_{t_n}$ such that $av_\kappa$ belongs to $K.$
Given $x\in G_{t_n}$ we consider the element $xav_\kappa x^{-1}= xa\kappa_n (x)^{-1} v_\kappa$.
We obtain $\tilde a = xa\kappa_n (x)^{-1}$ with coordinate $\tilde a_i = x_i a_i x_{\kappa(i)}^{-1}.$
Choose $x$ such that $x_i=a_i^{-1}$ on a set of $i\in A$ such that $\kappa(A)\cap A=\emptyset$ and put $x_i=e$ outside of $A$.
We then obtain that $\tilde a_i=e$ for any $i\in A.$
Consider now $y\in G_{t_n}$ supported on such a set $A$ and observe that the commutator $[y,\tilde a v_\kappa] = [y,xav_\kappa x^{-1}]$ is equal to $x\kappa_n(x)^{-1}$ that is by definition in $K$.

Proof of (2).
A similar argument as above tells us that for any $v\in V$ there exists $a\in K_\al$ such that $av\in L.$
In particular, if $v=v_\sigma$ for the permutation $\sigma = (21)\in S_2$,  then there exists a large enough $n\geq 2$ such that $av_\sigma\in L$ and $a\in \Ga_{t_n}.$
By (1), we can choose a large enough $n$ such that $X_{\tau,n},X_{\kappa,n}\subset K$ where $\tau= (2134)\in S_4$ and $\kappa=(12346578)\in S_8.$
Write $a=(a_1,a_2,a_3,a_4)$ for an element of the group $G_{t_n}$ that we identify with the group $G_{t_{n-2}}^4$. 
Consider $x:=(a_1^{-1},a_1,e,e)$ that is in an element of $X_{\tau,n}$ and thus is in $K$.
Since $L$ is a normal subgroup of $K$ we have that $x av_\sigma x^{-1}$ is in $K$ that is $\tilde a v_\sigma$ with $\tilde a= xa\sigma_n(x)^{-1}.$
Note that $\tilde a$ is of the form $(e,\tilde a_2,\tilde a_3,\tilde a_4).$
Hence, we can assume that the first coordinate of $a\in G_{t_{n-2}}^4$ is trivial.
Now identify $G_{t_n}$ with $G_{t_{n-3}}^8$ and fix $g\in G_{t_{n-3}}$.
Define the element $x:= ( e , e , e , e , g^{-1} , g , e , e )$ and observe that it is in $X_{\kappa,n}$ and thus in $K$ by hypothesis on $n$.
Consider the commutator $[x,av_\sigma] = x a \sigma_n(x)^{-1} a^{-1}$ and observe that it is equal to 
$$( g , g^{-1} , e ,e , g^{-1}, g , e, e ).$$
This finishes the proof of (2).

Proof of (3).
Consider $\ti K\lhd G_\al$ such that $\ti K=A\oplus B$ and $\cN_{G_\al}(A)=\cN_{G_\al}(B)=\ti K.$
Assume that $\ti K$ is not contained inside $K_\al$. 
If $A$ or $B$ is contained inside $K_\al$, then so is its normaliser implying that $\ti K\subset K_\al$, a contradiction. 
Therefore, both $A$ and $B$ are not contained inside $K_\al.$
Since $A$ commutes with $B$ we obtain that $A$ is a normal subgroup of $\ti K$ and likewise for $B$.
By (2), applied to $A\lhd \ti K\lhd G_\al$ and $B\lhd \ti K\lhd G_\al$, there exists a large enough $n$ such that $Y_n\subset A$ and $Y_n\subset B$, a contradiction since $A\cap B=\{e\}.$

Proof of (4).
Consider $A_n\subset G_{t_n}$ (resp. $B_n$) the set of elements with support contained in the first (resp. the last) $2^{n-1}$ coordinates.
Note that $G_{t_n}=A_n\oplus B_n$ and that $\Phi_\al(f_n^{n+1})(A_n)\subset A_{n+1},  \Phi_\al(f_n^{n+1})(B_n)\subset B_{n+1}$ where $f_{n}^{n+1}$ is the forest with $2^n$ roots whose each tree is $Y$.
This implies that the set of fractions $A:=\{\dfrac{a}{t_n}: \ n\geq 1, a\in A_n\}$ forms a subgroup of $K_\al$ and that $K_\al=A\oplus B$.
Note that $v_\sigma A v_\sigma^{-1} =B$ where $\sigma=(21)\in S_2$, implying that the normaliser of $A$ inside $G_\al$ contains $K_\al$ and likewise for $B$.
Since $K_\al\lhd G_\al$ is a normal subgroup we obtain that $\cN_{G_\al}(A)=K_\al = \cN_{G_\al}(B).$
We obtain that $K_\al\lhd G_\al$ has the decomposability property.
By (3), any proper normal subgroup with this later property is contained inside $K_\al$ making it maximal.
The rest of the proposition is obvious. 
\end{proof}

Theorem \ref{theo:KinG} follows from the last point of the proposition.

\section{Description and classification of fraction groups}\label{sec:classification}

In this section we restrict the class of functors $\Phi$ considered.
We now assume that $\Phi:\cF\to \Gr$ is a covariant monoidal functor built from the data of a group $\Ga$ and a morphism of the following form:
$$\Phi(Y)(g):\Ga\to \Ga\oplus \Ga, \ g\mapsto (\al_0(g),e_\Ga).$$
We write $\al$ rather than $\al_0$ which can be any endomorphism of $\Ga.$
Recall that $e=e_\Ga$ is the neutral element of the group $\Ga$ (we may remove the subscript $\Ga$ if the context is clear).
The associated functors, limit group, Jones' action, fraction group and category are denoted as in the previous section by $\Phi_\al, K_\al, \pi_\al, G_\al, \cC_\al$, respectively.

We are going to fully classify the class of all such fraction groups $G_\al$ up to isomorphism.

\subsection{From an endomorphism to an automorphism}\label{sec:End}

In this subsection we reduce the study to $\al$ an automorphism. 
The idea is to build a group denoted by $\lim\Ga$ and extending $\al$ on $\lim\Ga$ as an automorphism $\lim\al\in\Aut(\lim\Ga)$. 
We will then show that the induced fraction groups $G_\al$ and $G_{\lim\al}$ are isomorphic justifying this reduction of study.

\begin{definition}
Consider a group $\Ga$ and any endomorphism $\al\in\End(\Ga)$.
We define the directed system of groups $(\Ga_n,n\geq 0)$ with the family of group morphisms $(\iota_n^{m}:\Ga_n\to\Ga_m, m\geq n)$ where 
$$\Ga_n:=\{(g,n):\ g\in\Ga\}$$ is a copy of $\Ga$ and 
$$\iota_n^{n+p}(g,n) := (\alpha^p(g),n+p) , \ n,p\geq 0, g\in\Ga.$$ 
We write $\lim\Ga:=\varinjlim_\alpha \Ga_n$ for the inductive limit of this directed system. 
Denote by $\sim$ the equivalence relation generated by $(g,n)\sim (\al(g),n+1),g\in\Ga,n\geq 0$ and write $[g,n]$ for the equivalence class of $(g,n)$ which is by definition an element of $\lim\Ga.$
We extend the endomorphism $\alpha$ of $\Ga$ into an endomorphism $\lim\al$ of $\lim\Ga$ as follows: 
$$(\lim\alpha)[g,n] := [\alpha(g),n] \text{ for all } n\geq 0, g\in \Ga.$$
We obtain two groups of fractions $G_\al=K_\al\rtimes V$ and $G_{\lim\al}=K_{\lim\al}\rtimes V$.
\end{definition}

\begin{lemma}
The map $\lim\al$ is a group automorphism of $\lim\Ga.$
\end{lemma}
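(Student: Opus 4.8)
The plan is to exhibit an explicit two‑sided inverse for $\lim\al$ on $\lim\Ga$. First I would check that $\lim\al$ is a well‑defined group homomorphism: the formula $(\lim\al)[g,n]:=[\al(g),n]$ respects the generating relation $(g,n)\sim(\al(g),n+1)$ because $(\al(g),n)\sim(\al(\al(g)),n+1)$, i.e.\ $[\al(g),n]=[\al(\al(g)),n+1]$, which is exactly $(\lim\al)[\al(g),n+1]$; and it is multiplicative since the group law on $\lim\Ga$ is induced componentwise from that of $\Ga$ and $\al$ is an endomorphism.

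The key observation is that on the directed limit the ``shift'' $(g,n)\mapsto[g,n+1]$ is the identity composed with $\al$, so $\al$ becomes invertible after passing to the limit. Concretely, define $\beta:\lim\Ga\to\lim\Ga$ by $\beta[g,n]:=[g,n+1]$. I would verify $\beta$ is well defined (it sends the relation $(g,n)\sim(\al(g),n+1)$ to $[g,n+1]=[\al(g),n+2]$, which holds) and is a homomorphism for the same reason as above. Then I compute both composites: $(\lim\al\circ\beta)[g,n]=\lim\al[g,n+1]=[\al(g),n+1]=[g,n]$ using the defining relation, and $(\beta\circ\lim\al)[g,n]=\beta[\al(g),n]=[\al(g),n+1]=[g,n]$ again by the relation. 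Hence $\beta=(\lim\al)^{-1}$ and $\lim\al\in\Aut(\lim\Ga)$.

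There is essentially no hard part here: the statement is a soft consequence of the construction of the directed limit, and the only thing requiring a small amount of care is checking well‑definedness of the maps on equivalence classes (that every instance of the generating relation, and hence the equivalence relation it generates, is preserved). Everything else is a direct computation with the relation $[g,n]=[\al(g),n+1]$.
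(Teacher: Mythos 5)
Your proof is correct, and it takes a genuinely different route from the paper's. The paper defines $\lim\al$ via the compatible family $\beta_n(g,n)=(\al(g),n)$ and the universal property of the directed limit, then proves injectivity and surjectivity separately: injectivity by unpacking when a class $[g,n]$ is trivial (iff some $\al^p(g)=e$), surjectivity by noting $[g,n]=[\al(g),n+1]=(\lim\al)[g,n+1]$. You instead exhibit an explicit two-sided inverse $\beta[g,n]:=[g,n+1]$ and verify both composites equal the identity using only the generating relation $[g,n]=[\al(g),n+1]$. Your approach is cleaner: it avoids the characterization of the trivial class needed for the paper's injectivity argument, and it makes manifest the conceptual point (which the paper only gestures at in the surjectivity step) that passing to the colimit formally inverts $\al$ by turning the shift into its inverse. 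The paper's approach, on the other hand, is the one that generalizes straightforwardly to checking properties of limits of arbitrary compatible families of maps, which is a pattern reused later (e.g., Proposition \ref{prop:isomaut}). Both handle well-definedness adequately — you by checking the generating relation is respected, the paper by checking compatibility with the $\iota_n^{n+p}$.
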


\begin{proof}
Write $\beta_n:\Ga_n\to \Ga_{n}$ for all $n\geq 0$ defined as $\beta_n(g,n) = (\al(g),n).$
Consider $n,p\geq 0, g\in \Ga$ and note that 
\begin{align*}
\iota^{n+p}_n\circ \beta_n(g,n) & = \iota_n^{n+p}(\al(g),n) = (\al^{p+1}(g),n+p)\\
& = \beta_{n+p}(\al^p(g), n+p) = \beta_{n+p}\circ \iota_{n}^{n+p}(g,n).
\end{align*}
Therefore, the family $(\beta_n:\ n\geq 0)$ defines a map $\lim\al$ from $\lim\Ga$ to itself.
Moreover, $\lim\al$ is a group morphism because each $\beta_n,n\geq 0$ is.

Assume that $\lim\al[g,n]=e$ where $e$ is the neutral element of $\lim\Ga.$
Note that $[h,j]=e$ if and only if there exists $p$ large enough satisfying $\al^p(h)=e$ for $h\in\Ga,j\geq 0.$
Since $\lim\al[g,n]=[\al(g),n]$ we obtain that $\al^p(g)=e$ for $p$ large enough and thus $[g,n]=e$ implying that $\lim\al$ is injective.

Consider $[g,n]\in \lim\Ga$ with $n\geq 0$ that is equal to $[\al(g),n+1] = (\lim\al)[g,n+1]$ and thus belongs to the range of $\lim\al.$
This implies that $\lim\al$ is surjective and all together $\lim\al$ is an automorphism of the group $\lim\Ga.$
\end{proof}

\begin{proposition}\label{prop:isomaut}
Consider the morphism $\theta_0:\Ga\to\lim\Ga, g\mapsto [g,0]$. 
This induces the morphism
$$\theta_t:\Ga_t\to(\lim\Ga)_t, g=(g_\ell)_{\ell\in\Leaf(t)} \mapsto (\theta_0(g_\ell))_{\ell\in\Leaf(t)}$$
for any tree $t\in\fT.$
These maps are compatible with the two directed structures inducing a group isomorphism $$\theta:\varinjlim_{t\in\fT} \Ga_t\to \varinjlim_{t\in\fT}(\lim\Ga)_t.$$
This isomorphism is $V$-equivariant for the two Jones' actions and extends uniquely into a group isomorphism between the fraction groups $G_\al$ and $G_{\lim\al}.$
\end{proposition}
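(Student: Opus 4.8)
The first thing to check is the single identity $\theta_0\circ\al=(\lim\al)\circ\theta_0$ on $\Ga$, which is immediate since both sides send $g$ to $[\al(g),0]$. From this I would deduce that, writing $\theta_k:=\theta_0^{\times k}\colon\Ga^k\to(\lim\Ga)^k$, one has $\theta_m\circ\Phi_\al(f)=\Phi_{\lim\al}(f)\circ\theta_n$ for every forest $f\in\cF(n,m)$. By the presentation of $\cF$ and monoidality of $\Phi_\al,\Phi_{\lim\al}$ it suffices to verify this for $f=Y$, where it reads $(\theta_0(\al(g)),e_\Ga)=((\lim\al)\theta_0(g),e_\Ga)$, i.e.\ exactly the identity above. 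Since the connecting maps $\iota_{ft,t}$ and their analogues $\iota^{\lim}_{ft,t}$ are, after the identifications $\Ga_t\simeq\Ga^{|\Leaf(t)|}$ and $(\lim\Ga)_t\simeq(\lim\Ga)^{|\Leaf(t)|}$, precisely $\Phi_\al(f)$ and $\Phi_{\lim\al}(f)$, this says the $\theta_t$ are compatible with the two directed systems and so induce a group morphism $\theta\colon K_\al\to K_{\lim\al}$.

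The core of the argument is that $\theta$ is bijective, and the point to bear in mind is that neither $\theta_0$ nor the $\theta_t$ are individually bijective in general (take $\Ga=\Z$, $\al=\times 2$, where $\theta_0$ is the inclusion $\Z\hookrightarrow\Z[1/2]$): it is the passage to the limit over trees that repairs this. The combinatorial fact I would use throughout is that for any forest $f$, $\Phi_\al(f)$ sends a decoration $g$ at a leaf $\ell$ to $\al^d(g)$ at the leaf reached from $\ell$ by following $d$ left edges, and to $e_\Ga$ at every leaf whose ancestral path from $\ell$ contains a right edge (because $\Phi_\al(Y)(g)=(\al(g),e_\Ga)$), and similarly for $\Phi_{\lim\al}$ with $\lim\al$ in place of $\al$. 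For injectivity, let $x\in K_\al$ be represented by $(g,t)$; then $\theta(x)=e$ means $\Phi_{\lim\al}(f)(\theta_t(g))=e$ for some forest $f$ on $t$, which by the combinatorial fact amounts to $[\al^{d_\ell}(g_\ell),0]=e$ in $\lim\Ga$ for every leaf $\ell$ of $t$ ($d_\ell$ the length of the all-left descent below $\ell$ in $f$), equivalently $\al^{q_\ell}(g_\ell)=e_\Ga$ for some $q_\ell\geq 0$ for each $\ell$; but this last condition is exactly the condition for $(g,t)$ to already represent the identity of $K_\al$ (use a left comb of depth $\max_\ell q_\ell$ below each leaf). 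Hence $x=e$.

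For surjectivity, take $y\in K_{\lim\al}$ represented by $(h,t)$ with $h_\ell=[g_\ell,n_\ell]$; using $[g,n]=[\al^{N-n}(g),N]$ we may assume all $n_\ell$ equal to a common $N$. Now graft a left comb of depth $N$ below every leaf of $t$: the resulting representative of $y$ has value $(\lim\al)^N[g_\ell,N]=[\al^N(g_\ell),N]=[g_\ell,0]=\theta_0(g_\ell)$ at the leaf reached from $\ell$ by $N$ left edges, and $e$ at every other leaf, so it is of the form $(\theta_{t'}(g'),t')$ and $y$ lies in the image of $\theta$. Thus $\theta\colon K_\al\to K_{\lim\al}$ is an isomorphism.

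Finally, for $V$-equivariance I would use the second description of the Jones actions: after enlarging the representative $(h,s)$ of $g$ if necessary, $\pi_\al(v)$ carries $(h,s)$ to $(h\circ b,t)$, where $v=\tfrac{\sigma t}{s}$ and $b\colon\Leaf(t)\to\Leaf(s)$ is the bijection induced by $\sigma$; since $\theta_t$ acts leafwise and identically at each leaf it commutes with this relabelling-and-permuting of leaves, whence $\theta\circ\pi_\al(v)=\pi_{\lim\al}(v)\circ\theta$ for all $v\in V$. An isomorphism $K_\al\to K_{\lim\al}$ intertwining the two actions extends to the semidirect products by $av\mapsto\theta(a)v$ ($a\in K_\al,\ v\in V$), which is the unique homomorphism $G_\al\to G_{\lim\al}$ restricting to $\theta$ on $K_\al$ and to the identity on the canonical copy of $V$, and it is visibly an isomorphism. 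I expect the surjectivity of $\theta$ — specifically the unfolding trick turning level-$N$ decorations into level-$0$ ones inside $K_{\lim\al}$ — to be the only non-formal step; the injectivity and the semidirect-product bookkeeping are routine.
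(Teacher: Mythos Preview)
Your proof is correct and follows essentially the same route as the paper's: the compatibility reduces to $\theta_0\circ\al=(\lim\al)\circ\theta_0$, injectivity comes from the fact that $[g_\ell,0]=e$ in $\lim\Ga$ forces $\al^{q_\ell}(g_\ell)=e_\Ga$ for some $q_\ell$, and surjectivity uses the identity $(\lim\al)^N[g_\ell,N]=[g_\ell,0]$ after grafting depth-$N$ trees. The only cosmetic differences are that the paper grafts complete binary trees of depth $N$ where you graft left combs (either works, since $\Phi_\al$ kills everything off the all-left path), and that you spell out the $V$-equivariance via the leaf-permutation description while the paper leaves it implicit.
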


\begin{proof}
Let $\Phi:\cF\to\Gr$ and $\lim\Phi:\cF\to\Gr$ be the monoidal functors induced by $\al$ and $\lim\al$, respectively.

To prove that we have a directed system of maps $\theta_t$ it is sufficient to check that:
$$\theta_{ft}\circ \Phi(f) = (\lim\Phi)(f)\circ \theta_{t} \text{ for all } t\in\fT, f\in\Hom(\cF).$$
This later equality is a consequence of the following: $\lim\al\circ \theta_0 = \theta_0\circ \al.$
Therefore, $\theta$ is well-defined and is a group morphism as a limit of group morphisms.

Consider $(g,t)\in \varinjlim_{t\in\fT} \Ga_t$ such that $\theta_t(g,t)=e.$
We have that $g=(g_\ell)_{\ell\in\Leaf(t)}$ with $g_\ell\in\Ga.$
If $\theta_t(g,t)=e$, then $\theta_0(g_\ell)=e$ for any $\ell\in \Leaf(t)$. 
That is, there exists a power $N_\ell\geq 1$ such that $\al^{N_\ell}(g_\ell)=e$ by definition of the limit group $\lim\Ga.$
Put $N:=\max(N_\ell : \ell\in\Leaf(t))$ and consider a forest $f$ that is composable with $t$ and satisfying that each of its leaves is at distance $N$ from the root in its connected component.
We get that $\Phi(f)(g)$ has all of its components equal to $e$ or $\al^{N}(g_\ell)=e$ since $N\geq N_\ell$.
Since $(g,t)\sim (\Phi(f)(g),ft) =(e,ft)$ inside the limit group $\varinjlim_{t\in\fT}\Ga_t$ we obtain that $(g,t)$ is trivial and thus $\theta$ is injective.

Consider $g\in \varinjlim_{t\in\fT} (\lim\Ga)_t$.
We can assume, up to identifying classes with representatives, that $g=(g,t)\in (\lim\Ga)_t$ for a certain tree $t$ and thus can be written as $g=(g_\ell)_{\ell\in\Leaf(t)}$ with $g_\ell\in\lim\Ga.$
Since $\lim\Ga$ is the direct limit of the family of groups $(\Ga_n:\ n\geq 0)$ and $\Leaf(t)$ is finite we can assume that there exists a large enough $n\geq 0$ such that $g_\ell=[x_\ell,n]$ with $x_\ell\in\Ga$ for all $\ell\in\Leaf(t).$
Consider a forest $f$ composable with $t$ for which each of its leaves is at distance $n$ from the root in its connected component.
We obtain that $(g,t)\sim ( (\lim\Phi)(f)(g),ft)$ and note that every component of $(\lim\Phi)(f)(g)$ is either trivial or of the form $[\al^n(x_\ell),n]\in\lim\Ga$.
Observe that $[\al^n(x_\ell),n] = [x_\ell,0]$ inside $\lim\Ga$ and thus belongs to the range of $\theta_0$.
We obtain that $g$ is in the range of $\theta_{ft}$ implying that $\theta$ is onto.
\end{proof}

Here is a simple and explicit example which describes well the construction of above. For pedagogical reason we have chosen an example where the endomorphism $\al$ is injective.

\begin{example}
Consider the group $\Ga:=\Z$, a natural number $q\geq 2$ and the non-surjective endomorphism 
$$\al:\Z\to\Z, z\mapsto qz.$$ 
We are going to construct $(\lim \Ga, \lim\al)$ and construct an explicit isomorphism $j:\lim\Ga\to \Z[1/q]$ conjugating $\lim\al$ with $x\in\Z[1/q]\mapsto qx\in\Z[1/q].$
To do this we define a family of groups $\Ga_n$ and injective morphisms $j_n:\Ga_n\to \Z[1/q]$ and take the limit in $n$.
Note that $j$ and $j_n$ have no equivalent in the proof of above. We used them in order to exhibit a particularly nice description of the pair $(\lim\Ga,\lim\al)$.

For each $n\geq 0$ define $\Ga_n:=\{(z,n):\ z\in \Z\}$ which is a copy of the group $\Z$.
Consider the morphism 
$$j_n:\Ga_n\to \Z[1/q], \ (z,n)\mapsto \dfrac{z}{q^n} \text{ for all } n\geq 0.$$
Observe that $$j_{n+p}\circ \iota_n^{n+p}(z,n) = j_{n+p}( q^pz, n+p) = \dfrac{q^p z}{q^{n+p}} = \dfrac{z}{q^n} = j_n(z,n) \text{ for all } z\in \Z, n,p\geq 0.$$
Therefore, the family of morphisms $(j_n:\ n\geq 0)$ defines a morphism
$$j: \lim\Ga\to \Z[1/q],\ [z,n]\mapsto \dfrac{z}{q^n}.$$ 
Since all the $j_n,\ n\geq 0$ are injective so is $j$. 
It is not hard to see that $j$ is surjective implying that $j$ is a group isomorphism from $\lim \Ga$ to $\Z[1/q].$
Now, define the morphisms
$$\beta_n:\Ga_n\to\Ga_n,\ (z,n)\mapsto (qz,n), \ n\geq 0.$$
We have that 
\begin{align*}
\iota_n^{n+p}\circ \beta_n(z,n) & = \iota_n^{n+p}(qz,n) = (\dfrac{qz}{q^{n+p}}, n+p) = ( q \cdot \dfrac{z}{q^{n+p}}, n+p)\\
& = \beta_{n+p}(\dfrac{z}{q^{n+p}}, n+p ) =\beta_{n+p}\circ \iota_{n}^{n+p}(z,n)
\end{align*}
for all $z\in\Z,n,p\geq 0.$
Hence, we can define a limit endomorphism 
$$\lim\al: \lim\Ga\to\lim\Ga,\ [z,n]\mapsto [\al(z),n]=[qz,n].$$
Moreover, we observe that:
$$(j\circ \lim\al) [z,n] = j([qz,n]) = \dfrac{qz}{q^n} = q\cdot \dfrac{z}{q^n} = q\cdot j( [z,n]) \text{ for all } z\in\Z,n\geq 0.$$
All together, we deduce that $\lim\Ga$ is isomorphic to $\Z[1/q]$ and that $\lim\al$ is conjugated with the automorphism:
$$\Z[1/q]\to\Z[1/q], \ x\mapsto qx.$$ 
\end{example}

\subsection{Description of the fraction groups}\label{sec:description}
In this subsection we provide a description of $G_\al$ in terms of a twisted permutational restricted wreath product.

\begin{proposition}\label{prop:twistWP}
Consider a group $\Ga$, an \textit{automorphism} $\alpha\in\Aut(\Ga)$ and the associated fraction group $G_\al=K_\al\rtimes V$.
Let $\oplus_{\Q_2}\Ga$ be the direct sum of $\Ga$ over the dyadic rationals $\Q_2$ and define the action 
$$\beta:V\act \oplus_{\Q_2}\Ga, \
\beta_v(a)(x):= \al^{\log_2(v'(v^{-1}x)) } \left( a(v^{-1} x) \right) , \ v\in V, a\in \oplus_{\Q_2}\Ga, x\in \Q_2.$$
Write $\oplus_{\Q_2}\Ga\rtimes_\alpha V$ for the associated wreath product.
\begin{itemize}
\item The groups $G_\al$ and $\oplus_{\Q_2}\Ga\rtimes_\alpha V$ are isomorphic.

\item Here is an explicit isomorphism: For $t\in\fT$ and $\ell\in\Leaf(t)$ we write $N_\ell^t$ and $I_\ell^t:=[r_\ell^t, s_\ell^t)$ for the distance between the root of $t$ and its leaf $\ell$ and for the sdi associated to $\ell$, respectively.
The map $$\theta_t:\Ga_t\to \oplus_{\Q_2}\Ga$$
defined as
$$\theta_t(g):\Q_2\to \Ga,\ \theta_t(g)(x)=\begin{cases}
\al^{-N_\ell^t} (g_\ell) \text{ if } x=r_\ell^t, \ \ell\in\Leaf(t) \\
0 \text{ otherwise}
\end{cases}$$
for $g=(g_\ell:\ \ell\in \Leaf(t))\in \Ga_t$ and $x\in\Q_2$ is an injective morphism.
The family $(\theta_t:\ t\in\fT)$ defines a group isomorphism from $K_\al=\varinjlim_{t\in\fT}\Ga_t$ to $\oplus_{\Q_2}\Ga$ which is $V$-equivariant and thus extends uniquely into an isomorphism from $G_\al$ to $\oplus_{\Q_2}\Ga\rtimes_\al V.$
\end{itemize}
\end{proposition}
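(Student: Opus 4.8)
The plan is to prove the second, more precise assertion of the proposition, from which the first is immediate: I will show that each $\theta_t$ is a well-defined injective morphism, that the family $(\theta_t:\ t\in\fT)$ intertwines the directed system $(\Ga_t,\iota_{s,t})$ with the constant system $\oplus_{\Q_2}\Ga$ and hence descends to a morphism $\theta:K_\al\to\oplus_{\Q_2}\Ga$, that $\theta$ is bijective and $V$-equivariant, and finally that such a $\theta$ extends canonically to an isomorphism of the two semidirect products.

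That each $\theta_t$ is a morphism is clear once one notes that the left endpoints $r_\ell^t$, $\ell\in\Leaf(t)$, are pairwise distinct (the sdi $I_\ell^t$ being pairwise disjoint), so $\theta_t(g)$ is simply the finitely supported function carrying $\al^{-N_\ell^t}(g_\ell)$ at $r_\ell^t$ and $0$ elsewhere; injectivity then follows from $\al^{-N_\ell^t}$ being injective, which is where the standing hypothesis that $\al$ is an \emph{automorphism} enters. For the intertwining property it suffices to treat one caret: if $f$ grafts a $Y$ onto the leaf $\ell$ of $t$, its two children $\ell_0$ (left), $\ell_1$ (right) in $ft$ satisfy $N_{\ell_0}^{ft}=N_{\ell_1}^{ft}=N_\ell^t+1$ and $r_{\ell_0}^{ft}=r_\ell^t$, while $\iota_{ft,t}(g)=\Phi_\al(f)(g)$ carries $\al(g_\ell)$ at $\ell_0$ and $e$ at $\ell_1$; hence $\theta_{ft}(\iota_{ft,t}(g))(r_\ell^t)=\al^{-(N_\ell^t+1)}(\al(g_\ell))=\al^{-N_\ell^t}(g_\ell)=\theta_t(g)(r_\ell^t)$, nothing changes at the untouched leaves, and both sides vanish elsewhere, so $\theta_{ft}\circ\iota_{ft,t}=\theta_t$. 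The normalisation by $\al^{-N_\ell^t}$ is designed precisely to make refinements invisible. Thus $(\theta_t)$ defines $\theta:K_\al=\varinjlim_{t\in\fT}\Ga_t\to\oplus_{\Q_2}\Ga$; it is injective because the target is fixed and each $\theta_t$ is, and it is surjective because, given $a\in\oplus_{\Q_2}\Ga$, one chooses $n$ larger than the dyadic valuations of the finitely many points of $\supp(a)$, so that each such point is a left endpoint $r_\ell^{t_n}$ of $P_{t_n}$, and the element $g:=(\al^{N_\ell^{t_n}}(a(r_\ell^{t_n})))_{\ell\in\Leaf(t_n)}\in\Ga_{t_n}$ satisfies $\theta_{t_n}(g)=a$.

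Next I would check $V$-equivariance, i.e. $\theta\circ\pi_\al(v)=\beta_v\circ\theta$ for all $v\in V$. First, the chain rule for slopes from Section~\ref{sec:preliminary} shows that $\beta$ is indeed an action by endomorphisms of $\oplus_{\Q_2}\Ga$ (the exponents add up, and $\supp\beta_v(a)=v(\supp a)$ stays finite). Fix $v\in V$ and $g\in K_\al$; using the intertwining property just established we may represent $g$ in $\Ga_s$ for a tree $s$ whose partition $P_s$ is adapted to $v$, and let $t$ be the tree with $v(P_s)=P_t$. By the spatial description of the Jones action, $\pi_\al(v)(g)\in\Ga_t$ carries at the leaf $\lambda$ of $t$ the element $g_\ell$, where $\ell$ is the leaf of $s$ with $v(I_\ell^s)=I_\lambda^t$; since $v$ restricts to an increasing affine bijection on $I_\ell^s$, this $\ell$ is characterised by $v(r_\ell^s)=r_\lambda^t$. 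Evaluating both sides at $x=r_\lambda^t$: the left side equals $\al^{-N_\lambda^t}(g_\ell)$, while for the right side $v^{-1}(r_\lambda^t)=r_\ell^s$ and $v'(r_\ell^s)=|I_\lambda^t|/|I_\ell^s|=2^{N_\ell^s-N_\lambda^t}$, so $\beta_v(\theta_s(g))(r_\lambda^t)=\al^{N_\ell^s-N_\lambda^t}\bigl(\al^{-N_\ell^s}(g_\ell)\bigr)=\al^{-N_\lambda^t}(g_\ell)$, the exponent of the twist cancelling exactly the $\al^{-N_\ell^s}$ coming from $\theta_s$. At any $x\in\Q_2$ that is not a left endpoint of $P_t$, $v^{-1}(x)$ is not a left endpoint of $P_s$, hence $\theta_s(g)(v^{-1}x)=0$ and both sides vanish. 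This exponent bookkeeping is the heart of the argument and the step I expect to cause the most trouble.

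Finally, a $V$-equivariant isomorphism $\theta:K_\al\to\oplus_{\Q_2}\Ga$ extends to the map $G_\al\to\oplus_{\Q_2}\Ga\rtimes_\al V$, $av\mapsto\theta(a)v$ for $a\in K_\al$, $v\in V$; it is visibly a bijection, and it is a homomorphism precisely because of equivariance, since $(av)(a'v')=a\,\pi_\al(v)(a')\,vv'$ is sent to $\theta(a)\,\beta_v(\theta(a'))\,vv'=(\theta(a)v)(\theta(a')v')$ in $\oplus_{\Q_2}\Ga\rtimes_\al V$. Any extension of $\theta$ restricting to the identity on the $V$-factor must be this map, so it is unique, and we conclude $G_\al\simeq\oplus_{\Q_2}\Ga\rtimes_\al V$, which in particular yields the first assertion.
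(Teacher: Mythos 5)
Your proof is correct and follows essentially the same route as the paper's: define $\theta_t$, verify the intertwining with the directed system (you reduce to a single caret where the paper tracks the full leftmost path through a forest at once — same bookkeeping), note bijectivity via the left endpoints $r_\ell^t$ exhausting $\Q_2$, and check $V$-equivariance by evaluating at left endpoints and matching the exponent of $\al$ against the slope; the final extension to the semidirect products is the standard conclusion the paper leaves implicit. No gaps.
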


\begin{proof}
This proposition was already observed in \cite[Section 2.3]{Brothier19WP}.
We provide a proof for the convenience of the reader.
Let $(\Ga_t, \iota_{s,t}:\ s,t\in \fT, s\geq t)$ be the directed system of groups associated to the functor $\Phi_\al$ built from $\al.$
Consider a tree $t$ and $\theta_t$ as described above.
It is clearly valued in $\oplus_{\Q_2}\Ga$ since $|\supp(\theta_t(g))|\leq |\Leaf(t)|$ for all $g\in \Ga_t.$
It defines a group morphism from $\Ga_t$ to $\oplus_{\Q_2}\Ga$ and in fact an isomorphism from $\Ga_t$ to $\oplus_{L_t}\Ga$ where $L_t:=\{ r^t_\ell:\ \ell\in \Leaf(t)\}.$
Consider a forest $f$ that is composable with $t$.
For each leaf $\ell$ of $t$ is associated a root $R_\ell$ of $f$ and a geodesic path starting at $R_\ell$ going upward with only left edges ending at $S_\ell$ which is a leaf of $f.$
If $N^f_\ell$ is the length of this path we obtain that $\iota_{ft,t}(g)$ is supported on $\{R_\ell:\ \ell\in\Leaf(t)\}$ such that 
$$\iota_{ft,t}(g)(S_\ell)= \al^{N_\ell^f}(g_\ell), \ell\in \Leaf(t).$$
We deduce that $\theta_{ft}\circ\iota_{ft,t} = \theta_t$ and thus there exists a unique group morphism $\theta:\varinjlim_{t\in\fT}\Ga_t\to \oplus_{\Q_2}\Ga$ satisfying that $\theta$ restricts to $\theta_t$ on $\Ga_t$ for all $t\in\fT.$

It remains to show that $\theta$ is an isomorphism and is $V$-equivariant.
For each $t\in\fT$ the morphism $\theta$ restricts into an isomorphism from $\Ga_t$ onto $\oplus_{L_t}\Ga_t$.
In particular, $\theta$ is injective.
Moreover, since any dyadic rational of $\Q_2$ is the first point of a sdi we have that $\cup_{t\in\fT} L_t=\Q_2$ and thus $\cup_{t\in\fT} \oplus_{L_t}\Ga = \oplus_{\Q_2}\Ga$ implying that $\theta$ is surjective.
We have proven that $\theta$ is an isomorphism.

Let us show that $\theta$ is $V$-equivariant.
Consider $g\in K_\al$ and $v\in V$.
We can assume that $g=(g_\ell)_{\ell\in\Leaf(t)}$ is in $\Ga_t$ for some tree $t\in\fT$. 
Moreover, taking $t$ large enough we can assume that $v$ is adapted to the sdp $(I_\ell^t:\ \ell\in\Leaf(t))$ of $t$ sending this sdp to the sdp $(J_\ell^s:\ \ell\in\Leaf(s))$ (here we implicitly identify the leaves of $t$ and $s$).
Consider the Jones action $\pi:V\act K_\al$ and note that $\pi_v(g)$ has for representative $(g_\ell)_{\ell\in\Leaf(s)}$ but as an element of $\Ga_s$ inside $K_\al.$
We obtain that $\theta(\pi_v(g))$ is supported in $\{r_\ell^s:\ell\in\Leaf(s)\}$ satisfying 
$$\theta(\pi_v(g))(v r_\ell^t) = \theta(\pi_v(g))(r_\ell^s) = \al^{-N_\ell^s}(g_\ell) = \al^{N_\ell^t-N_\ell^s}(\theta(g)(r_\ell^t)).$$
We conclude by observing that the slope of $v$ restricted to $I_\ell^t$ is equal to $\dfrac{|I_\ell^s|}{|I_\ell^t|} = 2^{N_\ell^t-N_\ell^s}$ implying that $\beta_v(\theta(g))(vr_\ell^t)=\theta(\pi_v(g))(vr_\ell^t)$ for all $\ell$ and thus $\beta_v(\theta(g))=\theta(\pi_v(g)).$
\end{proof}

\begin{remark}
The isomorphism defined in the previous proposition is very convenient to work with. 
Although, it may not be the most obvious one.
Perhaps, the most natural way to consider $G_\al$ as a wreath product is to replace $N_\ell^t$ by the number $M_\ell^t$ of \textit{left} edges between the root and the leaf $\ell$ rather than the number of all edges.
This provides an isomorphism from $K_\al$ to $\oplus_{\Q_2}\Ga$.
However, to make it $V$-equivariant we then consider the action 
$$\ga_v(a)(x) = \al^{M_x^v}(a(v^{-1}x))$$
rather than $\beta_v$ where $M_x^v:=M_{v^{-1}I}-M_{I}$ for the choice of a sdi $I$ adapted to $v^{-1}$ and containing $x$ and where $M_I$ is equal to the number of \textit{left} edges to go from $[0,1)$ to $I$ inside the infinite rooted binary $t_\infty$ as described in \eqref{sec:forest}. 
Note that this formula does not depend on the choice of $I$.
\end{remark}

\subsection{Thin classification of fraction groups}

In this section we will decide when two fraction groups are isomorphic or not.
We consider some groups $\Ga,\ti\Ga$, automorphisms $\al\in\Aut(\Ga),\ti\al\in \Aut(\ti\Ga)$ and the associated fraction groups $G_\al=K_\al\rtimes V, G_{\ti\al}=K_{\ti\al}\rtimes V$ that we often write $G=K\rtimes V$ and $\ti G=\ti K\rtimes V$, respectively.
Moreover, we identify the fraction groups with the corresponding wreath products described in the previous subsection.
We start by constructing some elementary isomorphisms.

\begin{lemma}\label{lem:isomone}
Consider two isomorphic group $\Ga,\ti\Ga$ and $\beta\in\Isom(\Ga,\ti\Ga), \al\in\Aut(\Ga).$
Then the fraction groups $G=K\rtimes V$ and $\ti G = \ti K \rtimes V$ constructed from $\al$ and $\ti\al:= \beta \al \beta^{-1}$ are isomorphic.
\end{lemma}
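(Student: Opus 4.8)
The plan is to use the wreath product descriptions furnished by Proposition \ref{prop:twistWP}. Via the isomorphism of that proposition I identify $G=G_\al$ with the twisted permutational wreath product $\oplus_{\Q_2}\Ga\rtimes V$, where $v\in V$ acts on $b\in\oplus_{\Q_2}\Ga$ by $\beta_v(b)(x)=\al^{\log_2(v'(v^{-1}x))}\bigl(b(v^{-1}x)\bigr)$, and likewise I identify $\ti G=G_{\ti\al}$ with $\oplus_{\Q_2}\ti\Ga\rtimes V$ with the analogous action $\ti\beta_v(b)(x)=\ti\al^{\log_2(v'(v^{-1}x))}\bigl(b(v^{-1}x)\bigr)$. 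I would then define
$$\Theta\colon \oplus_{\Q_2}\Ga\rtimes V\longrightarrow \oplus_{\Q_2}\ti\Ga\rtimes V,\qquad \Theta(av):=(\beta\circ a)\,v,\quad a\in\oplus_{\Q_2}\Ga,\ v\in V,$$
where $\beta\circ a$ denotes the finitely supported map $x\mapsto\beta(a(x))$. Since $\beta$ is an isomorphism $\Ga\to\ti\Ga$, applying it coordinatewise is an isomorphism $\oplus_{\Q_2}\Ga\to\oplus_{\Q_2}\ti\Ga$, so $\Theta$ is a bijection whose inverse applies $\beta^{-1}$ coordinatewise and fixes $V$.

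The only point to verify is that $\Theta$ is a group homomorphism, and this reduces to the compatibility of the coordinatewise $\beta$ with the two twisted $V$-actions, namely $\beta\circ\beta_v(b)=\ti\beta_v(\beta\circ b)$ for all $v\in V$ and $b\in\oplus_{\Q_2}\Ga$. Evaluating both sides at $x\in\Q_2$ and setting $n:=\log_2(v'(v^{-1}x))\in\Z$, the left side is $\beta\bigl(\al^n(b(v^{-1}x))\bigr)$ and the right side is $\ti\al^{\,n}\bigl(\beta(b(v^{-1}x))\bigr)$. As $\ti\al=\beta\al\beta^{-1}$ we have $\ti\al^{\,n}=\beta\al^{n}\beta^{-1}$ for every $n\in\Z$ — it is here (and only here) that $\al\in\Aut(\Ga)$ is used, so that negative powers make sense — whence $\ti\al^{\,n}\bigl(\beta(b(v^{-1}x))\bigr)=\beta\bigl(\al^n(b(v^{-1}x))\bigr)$, as required. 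Plugging this into the wreath-product multiplication $(av)(bw)=a\,\beta_v(b)\,vw$ gives $\Theta\bigl((av)(bw)\bigr)=\Theta(av)\Theta(bw)$, so $\Theta$ is the desired isomorphism $G\to\ti G$.

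Essentially equivalently, one can argue directly at the level of the defining data: $\beta\oplus\beta$ intertwines $\Phi_\al(Y)\colon g\mapsto(\al(g),e)$ with $\Phi_{\ti\al}(Y)\colon g\mapsto(\ti\al(g),e)$, hence $\beta$ applied leafwise gives a compatible family of isomorphisms $\Ga_t\to\ti\Ga_t$, $t\in\fT$, which is $V$-equivariant for the two Jones' actions (the action only relabels leaves) and so induces an isomorphism $K_\al\to K_{\ti\al}$ extending uniquely to the fraction groups, exactly as in the proof of Proposition \ref{prop:isomaut}. I do not expect a genuine obstacle in either route; the sole delicate bookkeeping is the slope exponent in the twist together with the identity $\ti\al^{\,n}=\beta\al^n\beta^{-1}$, which is why assuming $\al$ an automorphism is convenient and consistent with the reduction carried out in Section \ref{sec:End}.
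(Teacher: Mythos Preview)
Your proof is correct and follows essentially the same route as the paper: both apply $\beta$ coordinatewise to $\oplus_{\Q_2}\Ga$ and verify $V$-equivariance via the identity $\ti\al^{\,n}=\beta\al^n\beta^{-1}$ applied to the slope exponent. Your alternative remark via the defining functorial data is also sound and amounts to the same computation at the level of trees.
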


\begin{proof}
Consider the isomorphism $\kappa$ from $\prod_{\Q_2}\Ga$ to $\prod_{\Q_2}\ti \Ga$ defined as $\kappa(a)(x) = \beta(a(x))$ for all $a\in \prod_{\Q_2}\Ga$ and $x\in\Q_2.$
Observe that $\supp(\kappa(a)) = \supp(a)$ for any maps $a$ implying that $\kappa$ sends finitely supported maps to finitely supported ones and thus $\kappa$ restricts to an isomorphism from $K$ to $\ti K$.
Let us check that $\kappa$ is $V$-equivariant. Denote by $\pi:V\act K$ and $\ti\pi:V\act \ti K$ the Jones actions.
We have that 
\begin{align*}
\kappa(\pi_v(a))(vx) & = \beta(\pi_v(a)(vx)) = \beta(\al^{\log_2(v'(x))}(a(x)))\\
& = (\beta\al \beta^{-1})^{\log_2(v'(x))}(\beta(a(x)) \\
& = {\ti \al}^{\log_2(v'(x))}(\beta(a(x))\\
& = \ti\pi_v(\kappa(a))(vx), \ a\in K, v\in V, x\in \Q_2.
\end{align*}
Therefore, the isomorphism $\kappa:K\to\ti K$ extends to an isomorphism $\theta:G\to\ti G$ such that $\theta(av)=\kappa(a) v$ for any $a\in K, v\in V.$
\end{proof}

The construction of the following isomorphism is less trivial than the last one and uses the dyadic valuation. 

\begin{notation}\label{not:valuation}
Let $\nu:\Q\to \Z$ be the dyadic valuation such that $\nu(0)=0$ and $\nu(\prod_{p: \text{ prime }} p^{n_p}) = n_2$ for any finitely supported maps $p\mapsto n_p\in\Z$.
\end{notation}

\begin{lemma}\label{lem:isomtwo}
Consider a group $\Ga$, an automorphism $\al\in\Aut(\Ga)$ and the associated fraction group that we denote here by $G=K\rtimes_\al V$.
Given $k\in \Ga$ we consider the automorphism $\ti\al:=\ad(k)\circ\al\in\Aut(\Ga)$ and the associated fraction group that we denote by $\ti G=K\rtimes_{\ti\al}V$.

We have that $G\simeq \ti G.$
\end{lemma}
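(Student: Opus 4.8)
The plan is to realize $\theta$ as the restriction of an almost trivial isomorphism between the \emph{unrestricted} wreath products, corrected by one inner conjugation that forces everything back into the restricted ones.

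\textbf{Step 1: the crossed homomorphism attached to $(\al,k)$.} Since $\al\in\Aut(\Ga)$, the group $\Z$ acts on $\Ga$ by $n\cdot g:=\al^n(g)$. I would first record the unique crossed homomorphism $c_{\bullet}:\Z\to\Ga$ with $c_1=k$, i.e.~$c_0=e$, $c_{n+1}=c_n\al^n(k)$ and $c_{-n}=\al^{-n}(c_n^{-1})$ for $n\ge 0$, so that $c_{m+n}=c_m\,\al^m(c_n)$ for all $m,n\in\Z$. A one line induction then gives $\ti\al^{\,n}=\ad(c_n)\circ\al^n$ for all $n\in\Z$, hence the two identities I will use repeatedly, $c_m^{-1}\ti\al^{\,m}(g)c_m=\al^m(g)$ and $\ti\al^{\,m}(c_n)=c_{m+n}c_m^{-1}$, together with the symmetric relation $\al=\ad(k^{-1})\circ\ti\al$ (which later hands me bijectivity for free).

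\textbf{Step 2: an isomorphism of the unrestricted wreath products.} Using the description $G\simeq\oplus_{\Q_2}\Ga\rtimes_\al V$ from Proposition~\ref{prop:twistWP}, with action $\beta_v(a)(x)=\al^{\ell_v(x)}(a(v^{-1}x))$, I would pass to the unrestricted groups $\widehat G:=\prod_{\Q_2}\Ga\rtimes_\al V$ and $\widehat{\ti G}:=\prod_{\Q_2}\Ga\rtimes_{\ti\al}V$, defined by the same formulas (so the action on $\widehat{\ti G}$ is $\ti\beta_v(a)(x)=\ti\al^{\ell_v(x)}(a(v^{-1}x))$). Define $d^0:V\to\prod_{\Q_2}\Ga$ by $d^0_v(x):=c_{\ell_v(x)}^{-1}$. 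From the cocycle identity $\ell_{vw}=\ell_v+\ell_w^v$ (Section~\ref{sec:preliminary}) and the identities of Step~1 one checks directly that $\beta_v=\ad(d^0_v)\circ\ti\beta_v$ on $\prod_{\Q_2}\Ga$ and that $d^0_{vw}=d^0_v\cdot\ti\beta_v(d^0_w)$; these are exactly what is needed for $\theta_0:av\mapsto a\cdot d^0_v\cdot v$ to be a group homomorphism $\widehat G\to\widehat{\ti G}$. It is bijective, the set-theoretic inverse being $bv\mapsto b\,(d^0_v)^{-1}v$, hence an isomorphism.

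\textbf{Step 3: conjugating into the fraction groups.} The obstruction is that $d^0_v$ is in general not finitely supported, so $\theta_0$ does not send $G$ into $\ti G$. I would repair this by conjugating with $h:=(c_{\nu(x)})_{x\in\Q_2}\in\prod_{\Q_2}\Ga$, where $\nu$ is the dyadic valuation of Notation~\ref{not:valuation}: set $\theta:=\ad(h)\circ\theta_0$, still an isomorphism of the unrestricted wreath products. Expanding and using $\ti\al^{\,m}(c_n)=c_{m+n}c_m^{-1}$ one obtains $\theta(av)=\kappa(a)\cdot d_v\cdot v$ with $\kappa(a)(x)=c_{\nu(x)}\,a(x)\,c_{\nu(x)}^{-1}$ and
$$d_v(x)\;=\;c_{\nu(x)}\,c_{\,\ell_v(x)+\nu(v^{-1}x)}^{-1}.$$
Since $\kappa$ preserves supports it restricts to an automorphism of $K=\oplus_{\Q_2}\Ga$; and $d_v(x)=e$ precisely when $\nu(x)=\ell_v(x)+\nu(v^{-1}x)$, that is (setting $y=v^{-1}x$) when $\nu(vy)-\nu(y)=\log_2(v'(y))$. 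So the last thing to verify is that this holds for all but finitely many $y$.

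\textbf{Step 4: the valuation identity — the only real point.} The heart of the proof is that, for fixed $v\in V$, one has $\nu(vy)-\nu(y)=\log_2(v'(y))$ for all $y\in\Q_2$ outside the (finite) set of left endpoints of the standard dyadic intervals of a standard dyadic partition to which $v$ is adapted. I expect this to be the main obstacle, though it is a direct computation: on such a standard dyadic interval $v$ carries an sdi $I$ of depth $a$ affinely onto an sdi $J$ of depth $b$, so $v'\equiv 2^{a-b}$ there; every interior dyadic point of $I$ has the form $S(m_I\cdot z)\mapsto S(m_J\cdot z)$ for some $z\in\fs$ not identically $0$, and $\nu$ of these two points equals $-(a+\mathrm{len})$ and $-(b+\mathrm{len})$ respectively, where $\mathrm{len}$ is the position of the last nonzero digit of $z$; the difference is $a-b=\log_2 v'(y)$. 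Hence $d_v$ is finitely supported, so $\theta$ restricts to a homomorphism $G\to\ti G$; being the restriction of the bijection $\theta$, with $\theta^{-1}$ obtained the same way from $\al=\ad(k^{-1})\circ\ti\al$, it is an isomorphism. Everything outside Step~4 is bookkeeping with the crossed homomorphism $c_\bullet$: without the valuation identity one gets only an isomorphism of the unrestricted wreath products, and the perturbation of $\ell_v$ by $\nu$ — here packaged as conjugation by $h$ — is exactly what lands the isomorphism inside the fraction groups.
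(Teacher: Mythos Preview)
Your proposal is correct and follows essentially the same route as the paper's proof: you build the sequence $c_n$ (the paper's $k_n$) satisfying $\ti\al^{\,n}=\ad(c_n)\circ\al^n$ and the cocycle relation $c_{m+n}=c_m\,\al^m(c_n)$, define the same map $av\mapsto a\cdot d^0_v\cdot v$ between the unrestricted wreath products, and then conjugate by $h(x)=c_{\nu(x)}$ (the paper's $f(x)=k_{\nu(x)}$) to force finite support. The only cosmetic difference is in Step~4, where you verify the valuation identity $\nu(vy)-\nu(y)=\log_2(v'(y))$ off a finite set by a direct computation on a single sdi, whereas the paper checks it on the building blocks $x\mapsto 2x$ and $x\mapsto x+2^{-m}$ and invokes closure under composition; both arguments are equally short and yield the same conclusion.
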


\begin{proof}
The idea of the proof is to construct two isomorphisms $\ad(f)$ and $av\mapsto a\cdot c_v\cdot v$ of the \textit{unrestricted} wreath products. 
Then we show that that the combined isomorphism $av\mapsto \ad(f)(a\cdot c_v\cdot v)$ restricts into an isomorphism between the \textit{restricted} wreath products.

We start by constructing a family $(k_n,n\in\Z)$ of $\Ga$ satisfying $\ti\al^n=\ad(k_n)\al^n$ for all $n\in\Z.$
Such a family is defined by induction as follows:
$$\begin{cases} k_0=e\\ 
k_{n+1} = k_n\al^n(k) \text{ for } n\geq 0\\
k_{-(m+1)} = k_{-m} \al^{-(m+1)}(k^{-1}) \text{ for } m\geq 0\end{cases}.$$
We will be using the following equation:
\begin{equation}\label{eq:k}
k_n \al^n(k_m) = k_{n+m} \text{ for any } n,m\in\Z.\end{equation}

Consider the dyadic valuation $\nu:\Q\to\Z$ defined in \eqref{not:valuation}.

{\bf Claim:} For any $v\in V$ there exists a finite subset $F_v\subset \Q_2$ such that 
$$\log_2(v'(x)) = \nu(vx) - \nu(x) \text{ for any } x\in \Q_2\setminus F_v.$$

Fix $v\in V$ and note that there exists a sdp $P$ (finite by definition) such that $v$ restricted to each sdi of $P$ is the composition of an affine map with slope a power of $2$ and a translation by a dyadic rational.
Moreover, the property of the claim is closed under composition and taking inverses. 
Therefore, it is sufficient to check this property individually for the functions $f:x\mapsto 2 x$ and $\tau:x\mapsto x+\dfrac{1}{2^m}$ with $m\geq 1$ with domain $[0,1]$.
The map $x\mapsto \nu(2x)-\nu(x)$ is constant equal to $1$ on $(0,1]$ and $x\mapsto \nu(x+1/2^m) - \nu(x)$ has its support contained in $\{n/2^m: 0\leq n\leq 2^m\}$ if $x$ is restricted to $[0,1]$.
In both cases we obtain the equality $\log_2(f'(x)) = \nu(fx)-\nu(x)$ and $\log_2(\tau'(x))=\nu(\tau x)-\nu(x)$ for all but finitely many $x\in \Q_2.$

Consider $\ov K:=\prod_{\Q_2} \Ga$ the group of all maps from $\Q_2$ to $\Ga.$
Using the description of $G,\ti G$ as wreath products we can easily extend the Jones actions $\pi:V\act K$ and $\ti\pi:V\act K$ induced by $\al$ and $\ti\al$ respectively into actions of $V$ on $\ov K$.
We continue to denote by $\pi,\ti\pi:V\act \ov K$ these extensions and write $\ov K\rtimes_\al V$ and $\ov K\rtimes_{\ti\al} V$ for the corresponding semidirect products.

{\bf Claim:} We have an isomorphism $$\theta:\ov K\rtimes_\al V\to \ov K\rtimes_{\ti\al} V$$ defined as 
$$\theta(av) = \ad(f) (a\cdot c_v \cdot v) = f\cdot a \cdot c_v \cdot v \cdot f^{-1}, \ a\in \ov K , v\in V$$
where we define
$$f(x):=k_{\nu(x)}, \ c_v(vx) :=(k_{\log_2(v'(x))})^{-1} \text{ for all } x\in \Q_2, v\in V.$$

Since $\ad(f)$ is an automorphism of $\ov K\rtimes_{\ti\al} V$ it is sufficient to show that
$$\rho:av\mapsto a\cdot c_v \cdot v$$ defines an isomorphism from $\ov K\rtimes_\al V$ to $\ov K\rtimes_{\ti\al} V.$
The map $\rho$ is multiplicative if and only if 
$$\pi_v(b)c_{vw} = c_v \ti\pi_v(b c_w) \text{ for all } b\in \ov K, v,w\in W.$$

Fix $x\in\Q_2$ and write $n:=\log_2(v'(wx))$ and $m:=\log_2(w'(x)).$
We have that
$$[\pi_v(b)c_{vw}] (vwx) = \al^n(b(wx)) \cdot k_{\log_2((vw)'(x))}^{-1} =  \al^n(b(wx)) \cdot k_{n+m}^{-1}$$
using the chain rule $(vw)'(x) = v'(wx)\cdot w(x)$ which is valid for elements of $V$.
Now,
\begin{align*}
[c_v \ti\pi_v(b c_w)](vwx) & = k_n^{-1} \cdot {\ti\al}^n( b(wx) c_w(wx)) = k_n^{-1}\ad(k_n)\al^n\left( b(wx) \cdot k_m^{-1}\right)\\
& = \al^n(b(wx)) \cdot \al^n(k_m^{-1}) k_n^{-1}\\
& = \al^n(b(wx))\cdot k_{m+n}^{-1}
\end{align*}
by Equation \eqref{eq:k}.
This proves that $\rho$ is multiplicative.

It is then a group morphism.
A similar proof shows that the formula $av\mapsto a\cdot c_v^{-1}\cdot  v$ defines a group morphism from $\ov K\rtimes_{\ti\al} V$ to $\ov K\rtimes_\al V$ which is an inverse of $\rho$ implying that $\rho$ is an isomorphism.

{\bf Claim:} The isomorphism $\theta$ restricts to an isomorphism from $K\rtimes_\al V$ onto $K\rtimes_{\ti\al} V.$

Observe that 
$$\theta(av) = (faf^{-1})\cdot (f c_v \ti\pi_v(f^{-1}))\cdot v, a\in \ov K,v\in V$$ and that $\supp(faf^{-1})\subset \supp(a).$
Therefore, it is sufficient to check that $f c_v \ti\pi_v(f^{-1})$ is finitely supported for any $v\in V$.
Fix $v\in V,x\in \Q_2\setminus F_v$ and write $n:=\log_2(v'(x)).$
We have that:
\begin{align*}
(f c_v \ti\pi_v(f^{-1}))(vx) & = f(vx) c_v(vx) k_n \al^n( f(x)^{-1}) k_n^{-1}\\
& = k_{\nu(vx)} \al^n(k_{\nu(x)}^{-1}) k_n^{-1}\\
& = k_{\nu(vx)} [k_n\al^n(k_{\nu(x)}) ]^{-1} \\
& = k_{\nu(vx)} (k_{n+\nu(x)})^{-1}\\
& = e,
\end{align*}
by definition of $F_v$ and Equation \ref{eq:k}.
Therefore, the support of $(f c_v \ti\pi_v(f^{-1}))$ is contained in the finite set $F_v.$
This implies that $\theta$ maps $K\rtimes_\al V$ inside $K\rtimes_{\ti\al} V$. 
A similar proof shows that $av\mapsto \ad(f^{-1})( a [f^v c_v^{-1} (f^v)^{-1}] v)$ defines a group morphism from $K\rtimes_{\ti\al}V$ to $K\rtimes_\al V$ which is an inverse to $\theta$ implying that $\theta$ restricts into an isomorphism from $G$ to $\ti G$.
\end{proof}

We now prove that an isomorphism $\kappa:K\to \ti K$ that is spatial can be decomposed as a product of isomorphisms. Recall that $K=\oplus_{\Q_2}\Ga$ and $\ti K=\oplus_{\Q_2}\ti\Ga$ where $\Ga,\ti\Ga$ are groups.

\begin{lemma}\label{lem:decomposition}
Let $\kappa:K\to \ti K$ be an isomorphism satisfying that $\supp(\kappa(a))=\varphi(\supp(a))$ for a certain $\varphi \in\SNQ)$. 
Then there exists a unique family of isomorphisms $(\kappa_x:K\to \ti K, x\in \Q_2)$ satisfying that
$$\kappa(a)(\varphi(x)) = \kappa_x(a(x)) \text{ for all } a\in K , x\in\Q_2.$$
\end{lemma}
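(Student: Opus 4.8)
The plan is to define, for each $x\in\Q_2$, the ``$x$-component'' $\kappa_x\in\Isom(\Ga,\ti\Ga)$ of $\kappa$ (note that the displayed identity forces $\kappa_x$ to go from $\Ga$ to $\ti\Ga$), and then to recover the identity and uniqueness by evaluating $\kappa$ on elementary functions. For $x\in\Q_2$ let $\delta_x:\Ga\to K$ send $g$ to the map supported on $\{x\}$ with value $g$ there, and let $\ti\delta_y:\ti\Ga\to\ti K$ be the analogous embedding; these are injective group morphisms, and images with distinct indices commute inside the restricted direct product. Since $\supp(\delta_x(g))\subset\{x\}$, the hypothesis gives $\supp(\kappa(\delta_x(g)))\subset\varphi(\{x\})=\{\varphi(x)\}$, so $\kappa(\delta_x(g))$ lies in the image of $\ti\delta_{\varphi(x)}$, and I would set $\kappa_x:=\ti\delta_{\varphi(x)}^{-1}\circ\kappa\circ\delta_x$, i.e.\ $\kappa_x(g)=\kappa(\delta_x(g))(\varphi(x))$.

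First I would check $\kappa_x\in\Isom(\Ga,\ti\Ga)$: it is a composition of group morphisms, hence a morphism; it is injective since $\kappa_x(g)=e$ forces $\kappa(\delta_x(g))=e$, hence $\delta_x(g)=e$ and $g=e$; and it is surjective because, given $\ti g\in\ti\Ga$, one picks $a\in K$ with $\kappa(a)=\ti\delta_{\varphi(x)}(\ti g)$ (possible as $\kappa$ is onto), notes $\varphi(\supp(a))=\supp(\kappa(a))\subset\{\varphi(x)\}$, and concludes $\supp(a)\subset\{x\}$ because $\varphi$ restricts to a \emph{bijection} of $\Q_2$ (this is the point where $\varphi\in\SNQ$ is used), so $a=\delta_x(g)$ for some $g$ with $\kappa_x(g)=\ti g$.

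Next, to get $\kappa(a)(\varphi(x))=\kappa_x(a(x))$ for arbitrary $a\in K$, I would write $a=\prod_{y\in\supp(a)}\delta_y(a(y))$ as a finite product of pairwise commuting elements, apply $\kappa$ to obtain $\kappa(a)=\prod_{y}\ti\delta_{\varphi(y)}(\kappa_y(a(y)))$, and evaluate at $\varphi(x)$; since $\varphi$ is injective on $\Q_2$ the factors are supported at distinct points, so all but the $y=x$ term vanish at $\varphi(x)$, leaving $\kappa_x(a(x))$ (consistently with the convention $a(x)=e$, i.e.\ $\kappa_x(a(x))=e$, when $x\notin\supp(a)$). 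Uniqueness is then immediate: any family $(\kappa_x')$ satisfying the identity must, upon specializing to $a=\delta_x(g)$, satisfy $\kappa_x'(g)=\kappa(\delta_x(g))(\varphi(x))=\kappa_x(g)$ for all $g$, so $\kappa_x'=\kappa_x$.

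There is no genuine obstacle in this lemma; it is a bookkeeping argument. The only steps that require any care are the two appeals to the hypotheses: using $\supp(\kappa(a))=\varphi(\supp(a))$ to see that $\kappa(\delta_x(g))$ is supported at the single point $\varphi(x)$ (well-definedness of $\kappa_x$), and using that $\varphi|_{\Q_2}$ is a bijection (which is guaranteed by $\varphi\in\SNQ$) both for the surjectivity of $\kappa_x$ and for the disjointness of supports when evaluating the product decomposition of $\kappa(a)$.
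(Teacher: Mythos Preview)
Your proof is correct and follows essentially the same approach as the paper: both define $\kappa_x$ via the action of $\kappa$ on the elementary functions $\delta_x(g)$ (the paper writes them $g_x$), verify the morphism property, and extend to general $a\in K$ using that these elements generate $K$. The only cosmetic difference is that you prove bijectivity of each $\kappa_x$ directly (your surjectivity argument using $\varphi|_{\Q_2}$ bijective is a nice touch), whereas the paper first identifies $\kappa$ with the product map $\prod_x\kappa_x$ and then remarks that such a product is an isomorphism if and only if each factor is.
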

\begin{proof}
Consider $g\in \Ga,x\in \Q_2$ and $g_x\in K$ the element supported in $\{x\}$ taking the value $g$ at $x$.
We have that $\kappa(g_x)$ has support equal to $\{\varphi(x)\}$ if $g\neq e$ and thus there exists $\kappa_x(g)\in \ti\Ga$ satisfying that $\kappa(g_x) = [\kappa_x(g)]_{\varphi(x)}$, the map supported in the singleton $\{\varphi(x)\}$ taking the value $\kappa_x(g)$ at $\varphi(x).$
Using that $\kappa$ is a group morphism we obtain that 
\begin{align*}
[\kappa_x(gh)]_{\varphi(x)} & =\kappa((gh)_x) = \kappa(g_x \cdot h_x) = \kappa(g_x)\cdot \kappa(h_x)\\
& = [\kappa_x(g)]_{\varphi(x)}\cdot [\kappa_x(h)]_{\varphi(x)} = [\kappa_x(g)\cdot\kappa_x(h)]_{\varphi(x)}
\end{align*}
for $g,h\in \Ga$ implying that $\kappa_x$ is a group morphism.
Therefore, $(\kappa_x,\ x\in \Q_2)$ is a family of group morphisms implying that 
$$\prod_{x\in\Q_2}\kappa_x: K\to \ti K, a\mapsto (\varphi(x)\mapsto \kappa_x(a(x)))$$ is a group morphism.
This latter morphism coincide with $\kappa$ on  $\{g_x:\ g\in\Ga,x\in\Q_2\}$ but since this set is generating $K$ we obtain that $\kappa=\prod_{x\in\Q_2}\kappa_x$.
It is rather obvious to see that $a\mapsto (\varphi(x)\mapsto \kappa_x(a(x)))$ is an isomorphism if and only if each $\kappa_x$ is an isomorphism which finishes the proof.
\end{proof}
 
Before proving the main theorem of this section we prove the following surprising rigidity fact: any isomorphism between fraction groups of the class considered is \textit{spatial} in the sense described below.

\begin{proposition}\label{prop:supportone}
Consider two groups $\Ga,\ti\Ga$ with $\Ga$ nontrivial and the associated fraction groups $G:=K\rtimes V, \ti G:=\ti K\rtimes V$ where $K=\oplus_{\Q_2}\Ga, \ti K=\oplus_{\Q_2}\ti\Ga$.
Assume we have an isomorphism $\theta:G\to \ti G.$

Then $\theta$ restricts to an isomorphism $\kappa: K\to \ti K$.
Moreover, there exists a unique homeomorphism $\varphi$ of Cantor space, normalising $V$ and stabilising $\Q_2$ (i.e.~$\varphi\in\SNQ$) satisfying
$$\supp(\kappa(a)) =\varphi(\supp(a)) \text{ for all } a\in K.$$
In particular, there exists a unique family of isomorphisms $(\kappa_x:\Ga\to \ti\Ga,\ x\in \Q_2)$ satisfying that 
$$\kappa(a)(\varphi(x)) = \kappa_x(a(x)) \text{ for all } a\in K, x\in \Q_2.$$
\end{proposition}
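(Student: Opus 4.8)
The plan is to proceed in three stages: first show $\theta(K)=\ti K$, then construct the homeomorphism $\varphi$ from the induced automorphism of $V$, and finally establish the support identity, which is the real heart of the matter. The first stage is immediate from Theorem \ref{theo:KinG} (applied with $\al=\id_\Ga$ and the analogous endomorphism for $\ti\Ga$): since $K\lhd G$ and $\ti K\lhd \ti G$ are characteristic, any isomorphism $\theta:G\to\ti G$ restricts to an isomorphism $\kappa:=\theta|_K:K\to\ti K$. Passing to the quotient, $\theta$ induces an automorphism $\phi\in\Aut(V)$, so that $\theta(av)=\kappa(a)\cdot c_v\cdot\phi(v)$ for all $a\in K$, $v\in V$, where $v\mapsto c_v\in\ti K$ is a map satisfying the cocycle identity $c_{vw}=c_v\cdot\ti\pi_{\phi(v)}(c_w)$ forced by multiplicativity.

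For the second stage I would invoke the reconstruction theorem for $\Aut(V)$ recalled in Section \ref{sec:general-fram} (the isomorphism $\ad:\NCV\to\Aut(V)$, valid by Rubin's theorem \cite{Rubin96,BCMNO19}): there is a unique homeomorphism $\varphi\in\NCV$ of Cantor space with $\phi(v)=\ad_\varphi(v)=\varphi v\varphi^{-1}$ for all $v\in V$. The content to be extracted next is that $\varphi$ actually lies in $\SNQ$, i.e.~$\varphi(\Q_2)=\Q_2$; this will drop out of the support analysis below rather than being proved separately, since the support identity $\supp(\kappa(a))=\varphi(\supp(a))$ with $\supp(a)\subset\Q_2$ forces $\varphi(\Q_2)\supseteq\bigcup_{a\in K}\supp(\kappa(a))=\Q_2$ (using $\ti\Ga$ nontrivial) and symmetrically for $\varphi^{-1}$.

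The main obstacle, and the crux of the argument, is the support identity. Here is the approach I expect to work. Fix $x\in\Q_2$ and a nontrivial $g\in\Ga$, and let $g_x\in K$ be the element supported at $\{x\}$ with value $g$. The goal is to show $\supp(\kappa(g_x))$ is a single point of $\Q_2$, namely $\varphi(x)$. The key is to characterise, purely group-theoretically inside $G$, the singleton-supported elements of $K$: these should be exactly the nontrivial $a\in K$ whose centraliser $C_G(a)$ is as large as possible, or — more robustly — those for which the normal closure data behaves correctly under the $V$-action. Concretely, I would use that $v\in V_x'$ (the derived subgroup of the stabiliser $V_x$, described by Lemma \ref{lem:slopeone} as those $v$ fixing a neighbourhood of $x$) centralises $g_x$, whereas for any $a\in K$ whose support meets two distinct points $x\neq y$ one can find $v\in V$ fixing a neighbourhood of $x$ but moving $y$, hence not centralising $a$; iterating this separates points. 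Transporting this characterisation through $\theta$, together with $\phi=\ad_\varphi$, shows $\supp(\kappa(g_x))$ is a singleton and that the assignment $x\mapsto\supp(\kappa(g_x))$ intertwines the $V$-actions on $\Q_2$ via $\varphi$, forcing $\supp(\kappa(g_x))=\{\varphi(x)\}$. The delicate point to watch is ruling out that $\supp(\kappa(g_x))$ could be a point of the \emph{other} copy of $\Q_2$ in $\fC$ or an accumulation point — but since $\kappa(g_x)\in\ti K=\oplus_{\Q_2}\ti\Ga$ by the first stage, its support is automatically a finite subset of $\Q_2$, so only finiteness (i.e.~singleton-ness) is at issue. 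Once the support identity is in hand, the final clause — existence and uniqueness of the family $(\kappa_x:\Ga\to\ti\Ga)$ with $\kappa(a)(\varphi(x))=\kappa_x(a(x))$ — is exactly Lemma \ref{lem:decomposition} applied to $\kappa$ with this $\varphi$.
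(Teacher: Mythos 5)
Your outline reproduces the paper's overall route: Theorem \ref{theo:KinG} gives $\theta(K)=\ti K$ and the decomposition $\theta(av)=\kappa(a)\cdot c_v\cdot\phi_v$, Rubin's theorem gives $\phi=\ad_\varphi$ with $\varphi\in\NCV$, and one then analyses supports through the subgroups of $V$ that fix a neighbourhood of a point. However, framing the last step as a purely group-theoretic \emph{characterisation} of singleton-supported elements (``there exists $z$ with $V_z'\subset C_G(a)$'') to be ``transported through $\theta$'' hides a genuine gap: $\theta$ does not carry $V_x'$ into any $V_z'\subset\ti G$. For $v\in V_x'$ one has $\theta(v)=c_v\,\phi_v$ with $c_v\in\ti K$ generically nontrivial, so $\theta(V_x')=\{c_v\phi_v:\ v\in V_x'\}$ is a twisted copy; and $\phi_v$ alone need not centralise $\kappa(g_x)$ --- only $c_v\phi_v$ does. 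From $vg_xv^{-1}=g_x$ one obtains $\kappa(g_x)=\ad(c_v)\,\ti\pi_{\phi_v}(\kappa(g_x))$, and the factor $\ad(c_v)$ cannot simply be deleted, so the centraliser criterion does not survive passage through $\theta$ as you state it.

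What does survive, and what the paper's proof actually uses, is the weaker support-level consequence: conjugation by any element of $\ti K$ preserves supports, so the identity above yields $\phi_v\bigl(\supp(\kappa(g_x))\bigr)=\supp(\kappa(g_x))$ for all $v\in V_x'$, i.e.~the subgroup $W_{\varphi(x)}:=\phi(V_x')$ of elements of $V$ fixing a neighbourhood of $\varphi(x)$ stabilises the finite set $\supp(\kappa(g_x))\subset\Q_2$. Your separating-points device then applies directly to this invariant set: if $s\neq\varphi(x)$ lies in it, pick a small sdi $I$ with $\varphi(x)\notin I$ and $I\cap\supp(\kappa(g_x))=\{s\}$ and swap the halves of $I$, a contradiction. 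This pins down $\supp(\kappa(g_x))=\{\varphi(x)\}$ in one stroke and makes your separate ``singleton first, then intertwine'' step unnecessary --- which is just as well, since for a fixed $g$ the element $vg_xv^{-1}$ is supported at $vx$ with value $\al^{\log_2 v'(x)}(g)$ rather than $g$, so the map $x\mapsto\supp(\kappa(g_x))$ does not obviously intertwine the actions until one already knows the answer. The remaining clauses (finiteness of supports from $\theta(K)=\ti K$, $\varphi(\Q_2)=\Q_2$, and the decomposition $\kappa=\prod_x\kappa_x$ via Lemma \ref{lem:decomposition}) are handled exactly as you indicate.
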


\begin{proof}
Consider $\theta:G\to\ti G$ as above. 
Theorem \ref{theo:KinG} implies that $\theta$ restricts to an isomorphism $\kappa$ from $K$ to $\ti K$. 
Therefore, 
$$\theta(av)=\kappa(a) \cdot c_v \cdot \phi_v,a\in K,v\in V$$ where 
$$\kappa\in\Isom(K,\ti K),\phi\in\Aut(V) \text{ and } c:V\to K, v\mapsto c_v.$$
By Rubin's theorem we have that $\phi=\ad_\varphi$ for a unique $\varphi\in\NCV.$
Define
$$W_z:=\{ v\in V:\ vz = z \text{ and } v'(z)=1\}$$
for any $z\in\fC$. 
Note that $v\in W_z$ if and only if there exists a sdi $I$ containing $z$ on which $v$ acts like the identity.
This implies that $\ad_\varphi:v\mapsto \varphi v \varphi^{-1}$ restricts to an isomorphism from $W_z$ to $W_{\varphi(z)}$, i.e.~$\phi(W_z)=W_{\varphi(z)}.$
Since $\Ga$ is nontrivial there exists $g\in\Ga, g\neq e.$
For $x\in\Q_2$ we write $g_x\in K$ for the map supported at $\{x\}$ taking the value $g$.
Observe that if $v\in W_x$, then 
\begin{align*}
\theta(vg_xv^{-1}) & = \theta( [\al^{\log_2(v'(x))}(g)]_{vx} ) =\kappa(g_x)\\
& = \theta(v) \theta(g_x) \theta(v)^{-1}\\
& = c_v\cdot \phi_v \kappa(g_x) \phi_v^{-1} \cdot c_v^{-1}\\
& = \ad(c_v) \ti\pi_{\phi_v}(\kappa(g_x)).
\end{align*}
We deduce that $\supp(\kappa(g_x)) = \phi_v(\supp(\kappa(g_x))$. 
Therefore, the group $\phi(W_x)=W_{\varphi(x)}$ is a subgroup of the stabiliser subgroup
$$\Stab_V(\supp(\kappa(g_x))):=\{ w\in V:\ w\cdot \supp(\kappa(g_x)) = \supp(\kappa(g_x))\}.$$
This implies that $\supp(\kappa(g_x))$ is equal to the singleton $\{\varphi(x)\}.$
Indeed, assume that there exists $s\in \supp(\kappa(g_x)),s\neq\varphi(x)$.
Since $\supp(\kappa(g_x))$ is a finite subset of $\Q_2$ we can find a sdi $I$ such that $\varphi(x)\notin I$ and $I\cap \supp(\kappa(g_x))= \{s\}.$
Let $I_0$ and $I_1$ be the first and second half of $I$ and consider $v\in V$ permuting $I_0$ with $I_1$ and fixing all other elements of $\fC$.
In particular, $v\in W_{\varphi(x)}$ and thus $v$ stabilises $\supp(\kappa(g_x)).$
However, by definition of $v$ we have that $vs\notin \supp(\kappa(g_x))$, a contradiction.
Since $\kappa$ is injective and $g\neq e$ we have that $\supp(\kappa(g_x))$ has at least one point and this point must be $\varphi(x).$
In particular, $\varphi$ stabilises $\Q_2.$
By observing that any $a\in K$ is a finite product of some $g_x$ as above we obtain that $\supp(\kappa(a))=\varphi(\supp(a)).$

The second statement of the proposition is given by Lemma \ref{lem:decomposition}.
\end{proof}

We are now able to prove the main theorem of this section which shows that the only isomorphic pairs of fraction groups come from the last two lemmata.

\begin{theorem}\label{th:isomGalpha}
Consider two groups with automorphisms $(\Ga,\al\in\Aut(\Ga))$ and $(\ti\Ga,\ti\al\in\Aut(\ti\Ga))$ and their associated fraction groups $G:= K\rtimes V$ and $\ti G:=\ti K\rtimes V$, respectively.

The groups $G$ and $\ti G$ are isomorphic if and only if there exists $\beta\in\Isom(\Ga,\ti\Ga)$ and $h\in\ti\Ga$ such that $\ti\al = \ad(h)\circ\beta \al\beta^{-1}.$
\end{theorem}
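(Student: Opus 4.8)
\textbf{The ``if'' direction} is routine, and the plan is to obtain it by composing the two elementary constructions already available. Given $\beta\in\Isom(\Ga,\ti\Ga)$ and $h\in\ti\Ga$ with $\ti\al=\ad(h)\circ\beta\al\beta^{-1}$, I would first apply Lemma \ref{lem:isomone} to get an isomorphism between $G$ and the fraction group built from $(\ti\Ga,\beta\al\beta^{-1})$, then apply Lemma \ref{lem:isomtwo} to the group $\ti\Ga$ equipped with the automorphism $\beta\al\beta^{-1}$ and the element $k:=h$, which identifies that fraction group with the one built from $(\ti\Ga,\ad(h)\circ\beta\al\beta^{-1})=(\ti\Ga,\ti\al)$, i.e.\ with $\ti G$. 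Composition finishes this direction.

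\textbf{The ``only if'' direction} is the substantive one. First I would dispatch the degenerate case: if $\Ga$ is trivial then $G=V$ is simple, so $G\simeq\ti G$ forces $\ti K$ trivial and hence $\ti\Ga$ trivial, and the relation holds with $\beta=\id_\Ga$, $h=e$; the case $\ti\Ga$ trivial is symmetric. So assume $\Ga$ and $\ti\Ga$ nontrivial and fix an isomorphism $\theta:G\to\ti G$. By Proposition \ref{prop:supportone} it restricts to an isomorphism $\kappa:K\to\ti K$, there is a unique $\varphi\in\SNQ$ and a family $(\kappa_x\in\Isom(\Ga,\ti\Ga):x\in\Q_2)$ with $\kappa(a)(\varphi(x))=\kappa_x(a(x))$, and $\theta$ takes the form $\theta(av)=\kappa(a)\cdot c_v\cdot\ad_\varphi(v)$ for some map $c:V\to\ti K$.

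Next I would extract the relation between $\al$ and $\ti\al$ by testing $\theta$ on a carefully chosen conjugation. Fix any $x\in\Q_2$; as in the proof of Lemma \ref{lem:slopeone} choose $v\in V$ with $vx=x$ and $v'(x)=2$, and for $g\in\Ga$ let $g_x\in K$ be the generator supported at $x$ with value $g$. On one side, the wreath-product formula for the Jones action (Proposition \ref{prop:twistWP}) gives $vg_xv^{-1}=(\al(g))_x$, hence $\theta(vg_xv^{-1})=[\kappa_x(\al(g))]_{\varphi(x)}$. On the other side, $\theta(v)=c_v\cdot\ad_\varphi(v)$; since $vx=x$ and $\varphi\in\SNQ$, Proposition \ref{prop:slope} yields that $\ad_\varphi(v)$ fixes $\varphi(x)$ with slope $(\ad_\varphi(v))'(\varphi(x))=v'(x)=2$, so conjugating $\kappa(g_x)=[\kappa_x(g)]_{\varphi(x)}$ by $\ad_\varphi(v)$ and then by $c_v\in\ti K$ produces $[\ad(c_v(\varphi(x)))(\ti\al(\kappa_x(g)))]_{\varphi(x)}$. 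Equating the two expressions for all $g\in\Ga$ gives $\kappa_x\circ\al=\ad(c_v(\varphi(x)))\circ\ti\al\circ\kappa_x$, that is $\ti\al=\ad(c_v(\varphi(x))^{-1})\circ\kappa_x\al\kappa_x^{-1}$. Taking $\beta:=\kappa_x$ and $h:=c_v(\varphi(x))^{-1}$ yields exactly the asserted identity.

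\textbf{The main obstacle.} Almost all of the difficulty has in fact been front-loaded into Proposition \ref{prop:supportone} (spatiality of every isomorphism, resting on Theorem \ref{theo:KinG} and Rubin's theorem) and Proposition \ref{prop:slope} (invariance of slopes at fixed dyadic points under $\SNQ$); granting these, the theorem collapses to the short computation above. The point that must be handled with care is the slope bookkeeping: it is precisely Proposition \ref{prop:slope}, valid because $\varphi$ stabilises $\Q_2$, that pins the relevant exponent to $1$ and thereby makes $\ti\al$ itself, rather than some power $\ti\al^{k}$, appear in the conclusion. Remark \ref{rem:Shayo} shows this invariance fails for a general element of $\NCV$, which is exactly why the analogue in the sequel, where one must work with all of $\Aut(V)$, is harder.
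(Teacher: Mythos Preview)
Your proposal is correct and follows essentially the same route as the paper's proof: the easy direction via Lemmata \ref{lem:isomone} and \ref{lem:isomtwo}, and the hard direction by invoking Proposition \ref{prop:supportone} to obtain the spatial decomposition $\theta(av)=\kappa(a)\cdot c_v\cdot\ad_\varphi(v)$ with $\varphi\in\SNQ$ and $\kappa=\prod_x\kappa_x$, then testing at a fixed point $x$ with $v'(x)=2$, using Proposition \ref{prop:slope} to transfer the slope, and reading off $\beta=\kappa_x$, $h=c_v(\varphi(x))^{-1}$. The only cosmetic difference is your treatment of the degenerate case via simplicity of $V$, where the paper simply declares it trivial (one could also cite Theorem \ref{theo:KinG} directly: $\theta(K)=\ti K$ forces $\ti K$ trivial when $K$ is).
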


\begin{proof}
The statement is trivially true if $\Ga$ and $\ti\Ga$ are the trivial groups. We now assume that $\Ga$ is nontrivial.

Consider an isomorphism $\theta:G\to \ti G$ with $G,\ti G$ as above.
By Proposition \ref{prop:isom}.4 we have that $\theta(K)=\ti K$.
Therefore, we have:
$$\theta(av)=\kappa(a) \cdot c_v \cdot \phi_v \text{ for all } a\in K, v\in V$$ 
for some $\kappa\in\Isom(K,\ti K)$, $\phi=\ad_\varphi\in\Aut(V)$, $c:V\to\ti K$ where $\varphi\in \NCV$.
Moreover, Proposition \ref{prop:supportone} implies that $\varphi$ stabilises $\Q_2$ and that $\kappa$ can be written as a product of isomorphisms: there exists a family $(\kappa_x:\ x\in \Q_2)$ such that 
$$\kappa(a)(\varphi(x)) = \kappa_x(a(x)) \text{ for all } x\in\Q_2, a\in K.$$
In particular, $\Ga$ is isomorphic to $\ti\Ga$ (via any of the $\kappa_x,x\in\Q_2$).

Let us show now that $\ti\al=\ad(h)\circ \beta \al \beta^{-1}$ for suitable $\beta\in\Isom(\Ga,\ti\Ga),h\in\ti\Ga.$
Fix $x\in\Q_2$ and $v\in V$ such that $vx=x$ and $v'(x)=2.$
Note that $\phi_v(\varphi(x))=\varphi(x)$ and $\phi_v'(\varphi(x))=2$ in virtue of Proposition \ref{prop:slope}.
If $g\in\Ga$ and $g_x$ is the map supported in $\{x\}$ taking the value $g$ we obtain that 
$$\theta(vg_xv^{-1}) = \kappa( [\al(g)]_x )= c_v \cdot \phi_v\cdot  \kappa(g_x) \cdot \phi_v^{-1} \cdot c_v^{-1}= \ad(c_v)( \ti\pi_{\phi_v}(\kappa(g_x))).$$
Note that $\kappa(g_x)$ is supported in $\{\varphi(x)\}$ and thus $\ti\pi_{\phi_v}(\kappa(g_x))$ is also supported in $\{\varphi(x)\}$ since $\phi_v(\varphi(x))=\varphi(x).$
Moreover, $\phi_v'(\varphi(x))=2$ implying that $\ti\pi_{\phi_v}(a)(\varphi(x)) = \ti\al(a(\varphi(x)))$ for all $a\in \ti K.$
If we evaluate the equality of above at $\varphi(x)$ we obtain:
$$\kappa_x(\al(g)) = \ad(c_v(\varphi(x))( \ti\al(\kappa_x(g))).$$
Since $g$ was arbitrary we deduce the equality:
$$\kappa_x \circ \al = \ad(c_v(\varphi(x)) \circ \ti\al \circ\kappa_x.$$
In particular, 
$$\ti\al= \ad(h)\circ \beta \al \beta^{-1}$$
where $\beta=\kappa_x$ and $h=c_v(\varphi(x))^{-1}.$

The converse is given by Lemmata \ref{lem:isomone} and \ref{lem:isomtwo}.
\end{proof}

\begin{remark}\label{rem:AWP}
Note that it is possible to follow Neumann's original proof for restricted wreath products in order to obtain that $\Ga$ is isomorphic to $\ti\Ga$ \cite{Neumann64}. However, the proof would be rather indirect and would provide a less precise statement regarding the relation between the automorphisms $\al$ and $\ti\al$. 
Our proof takes advantage of the highly transitive action of $V\act \Q_2$.

The fact that all isomorphisms between two groups in our class are spatial is very surprising and does not hold in general even for restricted standard wreath products. 
Moreover, one cannot expect to generalise Neumann's theorem for the class of all restricted permutational and twisted wreath products constructed from transitive actions. 
We illustrate these remarks with specific examples.

We start by constructing non-spatial automorphisms.
Consider a nontrivial group $\Ga.$
Let $G:=\Ga^2\wr \Z=\oplus_{n\in\Z}\Ga^2\rtimes \Z$ be the standard restricted wreath product of $\Ga^2=\Ga\oplus\Ga$ with $\Z$.
An element of $\oplus_{n\in\Z}\Ga^2$ is a finitely supported map $f:\Z\to \Ga^2, n\mapsto (f(n)_0, f(n)_1)$. 
Now we re-index them as maps from $\frac{1}{2}\Z$ to $\Ga$ using the isomorphism:
$$j: \oplus_{n\in\Z}\Ga^2\to \oplus_{n\in \frac{1}{2}\Z} \Ga$$
defined as
$$j(f)(n) = f(n)_0 \text{ and } j(f)(n+1/2) = f(n)_1 \text{ for all } n\in \Z, f\in\oplus_{n\in\Z}\Ga^2.$$
Consider the automorphism $\kappa\in \Aut(\oplus_{n\in \frac{1}{2}\Z} \Ga)$ which consists in shifting indices by $1/2.$
Observe that the conjugated automorphism $\kappa':= j^{-1}\circ \kappa\circ j$ of $\oplus_\Z \Ga^2$ is $\Z$-equivariant and thus extends to an automorphism $\theta$ of $\Ga^2\wr\Z$. 
However, it is not spatial. 
Indeed, fix $g\in\Ga$ nontrivial, $n\in\Z$ and define $f\in\oplus_{n\in\Z}\Ga$ supported at $n\in\Z$ so that $f(n)=(g,g).$
We have that $\kappa'(f)$ has support $\{n,n+1\}$ since $f(n)=(e_\Ga,g)$ and $f(n+1)=(g,e_\Ga).$
If $\theta$ was spatial, then the support of $f$ and $\kappa'(f)$ should have the same cardinal which is not the case here.

We now present isomorphic twisted permutational restricted wreath products that are constructed from non-isomorphic groups $\Ga$ and $\ti\Ga$.
Let $\Ga$ be any nontrivial group and consider $B:=B_0\oplus B_1$ the direct sum of two nontrivial groups.
Consider the standard restricted wreath product $G:=\Ga\wr B=K\rtimes B$.
Define now $\ti\Ga:= \Ga^{B_1}$, $\ti K:= \oplus_{B_0} (\Ga^{B_1})=\oplus_{B_0}\ti\Ga$ and the following action of $B$:
$$B\act \ti K ,\ [(b_0,b_1)\cdot f](a_0):= \beta_{b_1}(f(b_0^{-1}a_0)), (b_0,b_1)\in B, a_0\in B_0, f\in\ti K$$
such that 
$$\beta:B_1\act \Ga^{B_1}$$
is the left-shift action.
The semidirect product $\ti G:=\ti K\rtimes B$ is a twisted permutational restricted wreath product.
Note that the map:
$$\kappa:K\to \ti K, \kappa(f)(b_0,b_1) = f(b_0)(b_1),\ f\in K, b_0\in B_0, b_1\in B_1.$$
is a $B$-equivariant isomorphism inducing an isomorphism from $G$ to $\ti G$.
Although, $\Ga$ is not isomorphic to $\ti\Ga:=\Ga^{B_1}$ in general and the two actions $B\act B_0$ and $B\act B$ are transitive
\end{remark}

Considering endomorphisms rather than automorphisms we obtain the following classification result.

\begin{corollary}
Consider some groups $\Ga,\ti\Ga$ and endomorphisms $\al\in\End(\Ga),\ti\al\in\End(\ti\Ga).$
Let $G=K\rtimes V$ and $\ti G = \ti K\rtimes V$ be the associated fraction groups.
Let $\lim\Ga$ be the directed limit of groups constructed in Section \ref{sec:End} with automorphisms $\lim\al$ and similarly consider $(\lim{\ti \Ga},\lim{\ti \al}).$

The groups $G$ and $\ti G$ are isomorphic if and only if there exists an isomorphism $\beta\in\Isom(\lim\Ga,\lim{\ti\Ga})$ and $h\in\lim{\ti\Ga}$ such that 
$$\lim\al = \ad(h)\circ \beta\circ \lim{\ti\al}\circ \beta^{-1}.$$
\end{corollary}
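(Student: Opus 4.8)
The plan is to obtain this corollary as an immediate consequence of Proposition~\ref{prop:isomaut} together with Theorem~\ref{th:isomGalpha}; no new argument is required, and the only care needed concerns the direction of the maps involved.

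First I would apply Proposition~\ref{prop:isomaut} to each of the two pairs: it provides a group isomorphism $G = G_\al \simeq G_{\lim\al}$ and, applied to $(\ti\Ga,\ti\al)$, an isomorphism $\ti G = G_{\ti\al} \simeq G_{\lim\ti\al}$. Hence $G \simeq \ti G$ if and only if $G_{\lim\al} \simeq G_{\lim\ti\al}$. As established in Section~\ref{sec:End}, $\lim\al$ is an automorphism of $\lim\Ga$ and $\lim\ti\al$ an automorphism of $\lim\ti\Ga$, so the two fraction groups $G_{\lim\al}$ and $G_{\lim\ti\al}$ fall within the scope of Theorem~\ref{th:isomGalpha}.

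Next I would apply Theorem~\ref{th:isomGalpha} to the pairs $(\lim\Ga,\lim\al)$ and $(\lim\ti\Ga,\lim\ti\al)$. This yields that $G_{\lim\al}\simeq G_{\lim\ti\al}$ if and only if there exist an isomorphism $\beta\in\Isom(\lim\Ga,\lim\ti\Ga)$ and an element $h$ with $\lim\ti\al = \ad(h)\circ\beta\circ\lim\al\circ\beta^{-1}$. The last step is then a purely formal rewriting of this relation into the form stated in the corollary: one inverts the identity and uses that for any group isomorphism $\gamma$ one has $\gamma\circ\ad(k)\circ\gamma^{-1} = \ad(\gamma(k))$, which lets the inner automorphism be moved past $\beta$ and the roles of $\beta$, $\beta^{-1}$ (and of $h$) be interchanged; equivalently, since "$G\simeq\ti G$" is symmetric, one may simply apply Theorem~\ref{th:isomGalpha} with the two pairs in the opposite order. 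I expect no genuine obstacle here: the substantive content — the compatibility of the limit isomorphism with the two Jones actions, hence its extension to the fraction groups, together with the rigidity of isomorphisms between fraction groups — is already contained in Proposition~\ref{prop:isomaut} and Theorem~\ref{th:isomGalpha} respectively.
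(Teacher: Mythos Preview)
Your proposal is correct and is exactly the intended argument: the paper states this corollary immediately after Theorem~\ref{th:isomGalpha} without proof, precisely because it follows by combining Proposition~\ref{prop:isomaut} (reducing to the automorphism case via the limit) with Theorem~\ref{th:isomGalpha} applied to $(\lim\Ga,\lim\al)$ and $(\lim\ti\Ga,\lim\ti\al)$. Your remark about the orientation of $\beta$ and the placement of $h$ is well taken; the statement as printed has the roles slightly garbled, and your symmetric reformulation is the right way to resolve it.
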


Note that the isomorphism $\beta$ of above can be arbitrary. 
This suggests that there are no simple ways to express the connections between $(\Ga,\al)$ and $(\ti\Ga,\ti\al)$ without passing through the limits $(\lim\Ga,\lim\al)$ and $(\lim\ti\Ga,\lim\ti\al).$

\section{Description of the automorphism group of a fraction group}\label{sec:AutG}

In all this section we consider fraction groups that are isomorphic to \textit{untwisted} restricted permutational wreath products. 
These are the fraction groups obtained from the choice of a group $\Ga$ and the morphism $\Ga\to\Ga\oplus\Ga, g\mapsto (g,e_\Ga)$, that is when the endomorphism $\al$ is the identity automorphism $\id_\Ga$.
We consider a fixed group $\Ga$ that we assume nontrivial, the trivial case being not interesting for our study.
Put $K:=\oplus_{\Q_2}\Ga$ the group of finitely supported maps from $\Q_2$ to $\Ga$ and write $G:=K\rtimes V$ for the restricted permutational wreath product associated to the action $V\act \Q_2$ which is the fraction group we want to study.
The aim of this section is to provide a clear description of the automorphism group $\Aut(G)$.

\subsection{The four groups acting on $G$}
We start by fixing some notation and defining certain groups.
Recall that $\Homeo(\fC)$ is the group of homeomorphisms of Cantor space $\fC$ and that Thompson's group $V$ is identified with a subgroup of it.
Let 
$$\NCV:=\{\varphi\in\Homeo(\fC):\ \varphi V \varphi^{-1}=V\}$$ be the normaliser subgroup of $V$ inside $\Homeo(\fC)$ and put $\SNQ$ the stabiliser subgroup of $\NCV$ for the subset $\Q_2\subset \fC.$
Therefore, 
$$\SNQ:=\{ \varphi\in \Homeo(\fC):\ \varphi V\varphi^{-1} =V \text{ and } \varphi(\Q_2)=\Q_2\}.$$
As explained earlier $\NCV\to \Aut(V), \varphi\mapsto \ad_\varphi$ realises an isomorphism. 
The fact that this morphism is surjective is a consequence of Rubin's theorem and the injectivity follows from an easy argument, see \cite{Rubin96} and \cite[Section 3]{BCMNO19} for details.
Moreover, note that $\SNQ$ is a proper subgroup of $\Aut(V)$. 
Indeed, if $\sigma$ is the nontrivial permutation of $\{0,1\}$ we can induce an homeomorphism of $\fC$. This element is in $\NCV$ but does not stabilise $\Q_2$.
We refer the reader to \cite{BCMNO19} for a deep description of $\Aut(V)$.

Consider the group 
$$\ov K:=\prod_{\Q_2}\Ga$$ of all maps from $\Q_2$ to $\Ga$ and the unrestricted permutational wreath product $$\ov G:=\prod_{\Q_2}\Ga\rtimes V.$$
Identify $G:=\oplus_{\Q_2}\Ga\rtimes V$ with the corresponding subgroup of $\ov G$ and consider the elements of $\ov K$ inside $\ov G$ which normalise $G$.
We write
$$N_{\ov K}(G):=\{f\in \prod_{\Q_2}\Ga:\ f G f^{-1} =G\}$$ 
for the normaliser subgroup.
As usual $\Aut(\Ga)$ is the automorphism group of $\Ga$ and $Z\Ga$ the centre of $\Ga.$

We are going to show that $\Aut(G)$ is generated by some copy of the following groups:
$$\SNQ, \Aut(\Ga), N_{\ov K}(G) \text{ and } Z\Ga.$$
They will all act faithfully on $G$ except $N_{\ov K}(G)$ that we will mod out by the normal subgroup of constant maps from $\Q_2$ to $Z\Ga$ that we simply denote by $Z\Ga$.

\subsubsection{Action of the automorphism group of the input group}

The whole automorphism group $\Aut(\Ga)$ acts on $G$ in the expected diagonal way:
$$\beta\cdot av := \ov \beta(a) v, \ \beta\in \Aut(\Ga), a\in K, v\in V$$
where $$\ov\beta(a):\Q_2\to \Ga, x\mapsto \beta(a(x)).$$

It is easy to check that this formula defines a faithful action by automorphism $\ov\beta:\Ga\act G$.

\subsubsection{Action of certain automorphisms of Thompson's group $V$} 

The stabiliser $\SNQ$ acts spatially on $G$ as follows:
$$ \varphi \cdot (av) := a^\varphi \cdot \ad_\varphi(v), \ \varphi \in \SNQ, a\in \oplus_{\Q_2}\Ga, v\in V$$
where $$a^\varphi:\Q_2\to \Ga, x\mapsto a(\varphi^{-1} x) \text{ and } \ad_\varphi(v)= \varphi v \varphi^{-1}.$$
This is well-defined since $\supp(a^\varphi)=\varphi(\supp(a))$ and thus if $a$ is finitely supported so is $a^\varphi.$
This action is clearly faithful since the action of $\NCV$ on $V$ is known to be faithful, see \cite[Section 3]{BCMNO19}.

\subsubsection{Adjoint action of $\Ga$-valued functions}
Given $f\in \ov K$ we can act on $\ov G$ by conjugation:
$$f\cdot (av) = \ad(f)(av) = favf^{-1} = (fa(f^v)^{-1})\cdot v, \ f\in \ov K, a\in \ov K, v\in V.$$ 
This restricts to an action by automorphisms:
$$\ad: N_{\ov K}(G)\to \Aut(G).$$

Let us compute the kernel of $\ad$.
Assume that $\ad(f)=\id_G$ for a certain $f\in N_{\ov K}(G).$
We then obtain that 
$$v = \ad(f)(v) = f(f^v)^{-1} v$$ implying that $f=f^v$ for all $v\in V$.
Since $V\act \Q_2$ is transitive we obtain that $f$ is constant.
Consider $a\in K, x\in \Q_2$ and observe that 
$$a(x)=\ad(f)(a)(x) = f(x)a(x)f^{-1}(x)$$ 
implying that $f(x)$ is central in $\Ga$ and thus $f$ is constant and valued in $Z\Ga.$
Conversely, if $\zeta\in Z\Ga$ and $f(x)=\zeta$ for all $x\in \Q_2,$ we have that 
$$\ad(f)(av) = fa(f^v)^{-1} v = \zeta \zeta^{-1} a v = av$$ for all $a\in K, v\in V$.
We have proven that the kernel of the action $\ad:N_{\ov K}(G)\act G$ is equal to the group of all constant maps from $\Q_2$ to $Z\Ga$ that we simply denote by $Z\Ga$.

Note that the group $N_{\ov K}(G)$ is in general strictly larger than $K$ as shown in the following remark. It is clear that constant maps are in the normalisers but there are also some less trivial ones.

\begin{remark}
We provide an example of an element of the normaliser subgroup that is not in $K$ nor constant. 
Assume $\Ga=\Z_2.$
Consider the following set: $$X:=\{ \dfrac{4k+1}{2^n} :\ n\in \Z, k\in\Z \}$$ and write $Y:=X\cap [0,1)$.
We put $f:=\chi_Y$ the characteristic function of $Y$ interpreted as an element of $\ov K=\prod_{\Q_2} \Z_2.$
Observe that $X=2X$ and that the symmetric difference $(X+\dfrac{1}{2})\Delta X$ is locally finite in the sense that its intersection with any interval $(x,y), x,y\in \R$ is finite.
This implies that for any $v\in V$ we have that $v\cdot \chi_Y = \chi_Y \mod \oplus_{\Q_2}\Z_2.$
Hence, $$\ad(f)(av) = fa(f^v)^{-1} v = \chi_{Y\Delta v Y} \cdot a v, \ a \in K, v\in V,$$
which is an element of $K\rtimes V$ since $\chi_{Y\Delta v Y}$ is finitely supported. However, $f=\chi_Y$ is not in $K$ since $Y$ is an infinite set nor is constant.
\end{remark}

\subsubsection{Exotic automorphisms: actions of the centre of the input group}
There is an unexpected action of $Z\Ga$ on $G$ that we now describe.
We construct those exotic automorphisms in a similar way than in Lemma \ref{lem:isomtwo} by using the dyadic valuation and the slopes of elements of $V$.

\begin{proposition}\label{prop:zetacocycle}
For any $v\in V$ we define the map $p_v:=\log_2(v')^v - \nu +\nu^v$ that is
$$p_v(x) = \log_2(v'(v^{-1}x)) - \nu(x) + \nu(v^{-1}x) ,\ x\in\Q_2.$$
For any $\zeta\in Z\Ga$  we put 
$$c(\zeta): V\to \prod_{\Q_2}Z\Ga, \ c(\zeta)_v= \zeta^{p_v}, \ c(\zeta)_v(x)= \zeta^{p_v(x)}, v\in V, x\in\Q_2.$$
The map $c(\zeta)$ is valued in $\oplus_{\Q_2}Z\Ga$ and satisfies the following cocycle identity:
$$c(\zeta)_{vw} = c(\zeta)_v \cdot c(\zeta)_w^v \text{ for all } v,w\in V.$$

This defines an injective group morphism:
$$E:Z\Ga\to \Aut(G),\ E_\zeta(av) := a \cdot c(\zeta)_v \cdot v \text{ for all } a \in K, v\in V, \zeta\in Z\Ga.$$
\end{proposition}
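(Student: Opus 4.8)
The statement bundles four assertions: that each $c(\zeta)_v$ is finitely supported, that $c(\zeta)$ is a $1$-cocycle, that $E$ takes values in $\Aut(G)$ and is a group morphism, and that $E$ is injective. The plan is to reduce the first two to a single additive identity for the $\Z$-valued exponent maps $p_v$, and then to get the rest by routine semidirect-product bookkeeping, using throughout that maps valued in $Z\Ga$ are central in $K=\oplus_{\Q_2}\Ga$.

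\textbf{Finite support.} Substituting $x\mapsto v^{-1}x$ into the claim established inside the proof of Lemma~\ref{lem:isomtwo} (namely $\log_2(v'(x))=\nu(vx)-\nu(x)$ off a finite set) gives $\log_2(v'(v^{-1}x))=\nu(x)-\nu(v^{-1}x)$, i.e. $p_v(x)=0$, for all but finitely many $x\in\Q_2$. Hence $c(\zeta)_v=\zeta^{p_v}$ lies in $\oplus_{\Q_2}Z\Ga\subset K$, so $E_\zeta\colon av\mapsto a\,c(\zeta)_v\,v$ indeed takes values in $G$.

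\textbf{Cocycle identity.} Since $\zeta$ is central, $\zeta^{m+n}=\zeta^m\zeta^n$, so it suffices to prove the pointwise additive identity $p_{vw}=p_v+p_w^v$, where $f^v(x):=f(v^{-1}x)$. Write $p_v=\ell_v-b_v$ with $\ell_v:=\log_2(v')^v$ and $b_v:=\nu-\nu^v$. The first summand obeys $\ell_{vw}=\ell_v+\ell_w^v$ by the chain rule for slopes recalled earlier. For the second, $(\nu^w)^v=\nu^{vw}$, whence $b_v+b_w^v=(\nu-\nu^v)+(\nu^v-\nu^{vw})=\nu-\nu^{vw}=b_{vw}$. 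As $f\mapsto f^v$ is additive, subtracting yields $p_{vw}=p_v+p_w^v$, and then $c(\zeta)_{vw}=\zeta^{p_{vw}}=\zeta^{p_v}\bigl(\zeta^{p_w}\bigr)^v=c(\zeta)_v\cdot c(\zeta)_w^v$.

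\textbf{$E$ is a morphism into $\Aut(G)$, and injective.} Because here $\al=\id_\Ga$, the Jones action is untwisted and the product in $G$ is $(av)(bw)=a\,b^v\,vw$. Then $E_\zeta\bigl((av)(bw)\bigr)=a\,b^v\,c(\zeta)_{vw}\,vw$, while $E_\zeta(av)\,E_\zeta(bw)=a\,c(\zeta)_v\,b^v\,c(\zeta)_w^v\,vw$; since $c(\zeta)_v$ is $Z\Ga$-valued it commutes with $b^v$ inside $K$, and the cocycle identity matches the two products, so $E_\zeta$ is multiplicative. Centrality of $Z\Ga$ also gives $\zeta^{p_v}\eta^{p_v}=(\zeta\eta)^{p_v}$, hence $E_\zeta\circ E_\eta=E_{\zeta\eta}$; thus $E$ is a group morphism, $E_\zeta$ has two-sided inverse $E_{\zeta^{-1}}$, and $E_\zeta\in\Aut(G)$. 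For injectivity: if $E_\zeta=\id_G$ then $c(\zeta)_v=e$ for every $v$, i.e. $\zeta^{p_v(x)}=e$ for all $v\in V$ and $x\in\Q_2$; choosing $v\in V$ with $v(0)=0$ and $v'(0)=1/2$ (such $v$ exists, exactly as in the proof of Lemma~\ref{lem:slopeone}), we get $p_v(0)=\log_2(1/2)-\nu(0)+\nu(0)=-1$, so $\zeta^{-1}=e$, forcing $\zeta=e$.

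\textbf{Expected main obstacle.} There is no deep difficulty here: the only real care needed is the bookkeeping around the superscript-$v$ convention and checking that adding the perturbation $-b_v=-(\nu-\nu^v)$ to $\ell_v$ does not destroy the cocycle property — which is harmless since $v\mapsto\nu-\nu^v$ is itself a coboundary of $\nu$.
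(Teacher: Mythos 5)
Your proof is correct and follows essentially the same route as the paper's: finite support via the claim inside Lemma~\ref{lem:isomtwo}, the cocycle identity via $p_{vw}=p_v+p_w^v$, multiplicativity and invertibility of $E_\zeta$ using centrality of $Z\Ga$, and injectivity by evaluating $p_v$ at $0$ for a well-chosen $v\in V_0$. The one cosmetic difference is that you verify $p_{vw}=p_v+p_w^v$ by splitting $p_v=\ell_v-b_v$ into the slope cocycle minus the coboundary of $\nu$, whereas the paper just expands the definition directly; and you pick $v'(0)=1/2$ rather than the paper's $v'(0)=2$ — both yield $\zeta=e$.
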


\begin{proof}
The first claim of Lemma \ref{lem:isomtwo} implies that $p_v:\Q_2\to\Z$ is finitely supported and so  $c(\zeta)_v$ is for all $v\in V,\zeta\in Z\Ga.$

Let us show that $E$ defines a group morphism.
Consider $a,b\in K, v,w\in V, \zeta\in Z\Ga$.
We have that 
\begin{align*}
E_\zeta(av) \cdot E_\zeta(bw) & = a \cdot \zeta^{p_v} \cdot v \cdot b \cdot \zeta^{p_w} \cdot w = a\cdot \zeta^{p_v}\cdot  b^v \cdot \zeta^{p_w^v} \cdot vw = \zeta^{p_v + p_w^v} \cdot ab^v \cdot vw \\
E_\zeta(av\cdot bw) & = E_\zeta( a b^v \cdot vw) = ab^v\cdot  \zeta^{p_{vw}}\cdot  vw. 
\end{align*}
Observe now that
\begin{align*}
[p_v + p_w^v](x) & = \log_2(v'(v^{-1}x)) - \nu(x) + \nu(v^{-1}x) + \log_2(w'(w^{-1}v^{-1}x)) - \nu(v^{-1}x) + \nu(w^{-1}v^{-1}x)\\
& = \log_2(v'(v^{-1}x)) +  \log_2(w'(w^{-1}v^{-1}x)) - \nu(x) + \nu(w^{-1}v^{-1}x)\\
p_{vw}(x) & = \log_2((vw)'((vw)^{-1}x) - \nu(x) + \nu((vw)^{-1}x) \\
& = \log_2(v'(v^{-1}x)) + \log_2(w'(w^{-1} v^{-1}x) - \nu(x) + \nu(w^{-1} v^{-1} x),
\end{align*}
for $x\in \Q_2.$
We proved that $$p_v+p_w^v = p_{vw}.$$
All together this implies that for any $\zeta\in Z\Ga$ the map $E_\zeta$ is an endomorphism of the group $G$.
It is easy to see that $E_\zeta$ is bijective with inverse map $E_{\zeta^{-1}}$ implying that $E_\zeta$ is an automorphism of $G$ for $\zeta\in Z\Ga.$
It is rather obvious that $E_\zeta\circ E_\eta = E_{\zeta\eta}$ for $\zeta,\eta\in Z\Ga$ implying that $E$ defines a group morphism from $Z\Ga$ to $\Aut(G).$

To finish the proof it is sufficient to prove that $E$ is faithful.
Assume that $E_\zeta=\id_G$ for a certain $\zeta\in Z\Ga.$
This is equivalent to having
$$\zeta^{p_v(x)}=e \text{ for all } v\in V, x\in\Q_2.$$
Consider $v\in V$ such that $v0=0$ and $v'(0)=2.$
We obtain that $\zeta^{p_v(0)}= \zeta$ and thus $\zeta=e$. 
Therefore, the kernel of $E$ is trivial.
\end{proof}

We now fix some notations concerning the actions of those four groups on $G$ and summarise the observation of above in the following proposition.
\begin{proposition}
Consider the direct product $\SNQ\times \Aut(\Ga)$ and define the map 
$$A: \SNQ\times \Aut(\Ga)\to \Aut(G), \ A_{\varphi,\beta}(av):= \ov \beta(a)^\varphi \ad_\varphi(v)$$
for $\varphi\in  \SNQ, \beta\in\Aut(\Ga), a\in K, v\in V$ such that 
$$\ov\beta(a)(x):=\beta(a(x)) \text{ and } a^\varphi(x):=a(\varphi^{-1}(x)), x\in\Q_2.$$
The map $A$ is an injective group morphism.
Define the map $$\ad:N_{\ov K}(G)\to \Aut(G), \ad(f)(av):= favf^{-1} = (faf^{-1}) f(f^v)^{-1} v$$
for $f\in N_{\ov K}(G), a\in K, v\in V.$
The map $\ad$ is a group morphism and its kernel is $Z\Ga\lhd N_{\ov K}(G).$
We continue to write $$\ad:N_{\ov K}(G)/Z\Ga\to \Aut(G)$$ for
the factorised injective group morphism.

For any $v\in V$ we put $$p_v:\Q_2\to \Z, \ p_v(x):= \log_2(v'(v^{-1}x)) -\nu(x) + \nu(v^{-1}x), \ x\in\Q_2$$
where $\nu$ is the dyadic valuation and define the map
$$E:Z\Ga\to \Aut(G),\ E_\zeta(av) = a \cdot \zeta^{p_v} \cdot v, \ a\in K , v\in V, \zeta\in Z\Ga$$
which is an injective group morphism.
\end{proposition}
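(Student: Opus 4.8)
The plan is to dispatch the three assertions in turn, observing that for $\ad$ and $E$ almost everything is already in place, so the only real work concerns the map $A$.

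\textbf{The morphism $A$.} First I would record the formal ingredients: since $\varphi\in\SNQ$ stabilises $\Q_2$, the map $a\mapsto a^\varphi$ is an automorphism of $K=\oplus_{\Q_2}\Ga$ (it just permutes the index set $\Q_2$); the pointwise application $\ov\beta$ of $\beta$ is an automorphism of $K$ commuting with the previous one; and $\ad_\varphi\in\Aut(V)$ because $\varphi\in\NCV$. The only point requiring a computation is that $A_{\varphi,\beta}$ respects the semidirect product structure of $G=K\rtimes V$, which, after expanding $A_{\varphi,\beta}(av\cdot bw)=A_{\varphi,\beta}(a\,b^{v}\,vw)$ and $A_{\varphi,\beta}(av)A_{\varphi,\beta}(bw)$, reduces to the identity
$$\ov\beta(b^{v})^{\varphi}=\bigl(\ov\beta(b)^{\varphi}\bigr)^{\ad_\varphi(v)}\qquad(b\in K,\ v\in V);$$
evaluating both sides at $x\in\Q_2$ gives $\beta\bigl(b(v^{-1}\varphi^{-1}x)\bigr)$ on each side, using $(\ad_\varphi v)^{-1}=\varphi v^{-1}\varphi^{-1}$ and the description $\pi_v(b)=b(v^{-1}\,\cdot\,)$ of the Jones action from Proposition \ref{prop:twistWP} specialised to $\al=\id_\Ga$. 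Hence $A_{\varphi,\beta}\in\Aut(G)$, with inverse $A_{\varphi^{-1},\beta^{-1}}$. A similar pointwise check, now using $\ad_{\varphi_1}\ad_{\varphi_2}=\ad_{\varphi_1\varphi_2}$ on $V$ together with $\ov{\beta_1}\bigl(\ov{\beta_2}(a)^{\varphi_2}\bigr)^{\varphi_1}=\ov{\beta_1\beta_2}(a)^{\varphi_1\varphi_2}$ on $K$, yields $A_{\varphi_1,\beta_1}\circ A_{\varphi_2,\beta_2}=A_{\varphi_1\varphi_2,\ \beta_1\beta_2}$, so $A$ is a group morphism. For injectivity, if $A_{\varphi,\beta}=\id_G$ then restricting to $V\subset G$ gives $\ad_\varphi=\id_V$, whence $\varphi=\id$ by faithfulness of the action $\NCV\act V$ (a consequence of Rubin's theorem, see \cite{BCMNO19}); restricting then to $K$ forces $\ov\beta=\id_K$, hence $\beta=\id_\Ga$.

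\textbf{The morphisms $\ad$ and $E$.} The map $\ad\colon N_{\ov K}(G)\to\Aut(G)$ is just conjugation inside $\ov G$ restricted to the subgroup of $\ov K$ normalising $G$, hence a group morphism; its kernel was computed, in the paragraph immediately preceding the statement, to be exactly the group of constant maps $\Q_2\to Z\Ga$, so the induced map $N_{\ov K}(G)/Z\Ga\to\Aut(G)$ is injective by construction. For $E$, the cocycle identity $p_v+p_w^{v}=p_{vw}$, the finiteness of $\supp(p_v)$, the fact that $E_\zeta$ is an automorphism with inverse $E_{\zeta^{-1}}$, and the injectivity of $\zeta\mapsto E_\zeta$ are precisely the content of Proposition \ref{prop:zetacocycle}. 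Combining the three points proves the proposition.

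\textbf{Expected difficulty.} There is no genuine obstacle here: the statement merely repackages the four preceding subsubsections. The only place where care is needed is keeping the semidirect-product conventions consistent — in particular the direction of the Jones action, $b^{v}=b(v^{-1}\,\cdot\,)$, and the formula $(\ad_\varphi v)^{-1}=\varphi v^{-1}\varphi^{-1}$ — since an off-by-an-inverse there is the one realistic way to make the compatibility identity for $A$ come out wrong.
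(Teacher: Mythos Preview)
Your proposal is correct and follows essentially the same approach as the paper: the paper's own proof is a single sentence stating that everything has already been established in the preceding subsubsections except that the actions of $\SNQ$ and $\Aut(\Ga)$ on $G$ mutually commute, which is ``an easy computation.'' You simply spell out this easy computation (the compatibility identity $\ov\beta(b^{v})^{\varphi}=(\ov\beta(b)^{\varphi})^{\ad_\varphi(v)}$ and the multiplicativity of $A$) more explicitly than the paper does, and correctly defer the $\ad$ and $E$ assertions to the earlier material and Proposition~\ref{prop:zetacocycle}.
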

We have nothing to prove except that the actions of $\SNQ$ and $\Aut(\Ga)$ on $G$ mutually commute which is an easy computation.

\subsection{Semidirect product}\label{sec:semidirect}

We will later prove that any automorphism of $G$ can be decomposed uniquely as a product of four elements of the groups 
$$\SNQ, \Aut(\Ga),N_{\ov K}(G)/Z\Ga \text{ and } Z\Ga.$$
In this section we describe how these four groups sit together inside $\Aut(G)$.
They come in two direct products: $\SNQ\times \Aut(\Ga)$ and $Z\Ga\times N_{\ov K}(G)/Z\Ga$ with the first direct product acting on the second in a semidirect product fashion.
Before defining the action we will need a technical lemma.

\begin{lemma}\label{lem:formula}
Consider $x\in \Q_2, \phi,\varphi\in\SNQ$ and $v,w\in V$.
We have the following formula:
\begin{enumerate}
\item If $vx=wx$, then $$\log_2((\varphi^{-1} v \varphi)'(\varphi^{-1}x)) - \log_2(v'(x)) = \log_2((\varphi^{-1} w \varphi)'(\varphi^{-1}x)) - \log_2(w'(x));$$
\item Given $x\in\Q_2$ and $v\in V$ satisfying $v0=x$ we put $$\ga_\varphi(x):=  \log_2((\varphi^{-1} v\varphi)'(\varphi^{-1}0)) - \log_2(v'(0)).$$
This formula does not depend on the choice of $v$ and defines a map $\ga_\varphi:\Q_2\to \Z$;\\
\item For any $x\in \Q_2$ and $v\in V$ we have
$$\ga_\varphi(vx)-\ga_\varphi(x) = \log_2((\varphi^{-1} v \varphi)'(\varphi^{-1}x)) - \log_2(v'(x));$$\\
\item We have the equality $\ga_{\phi\varphi} = \ga_\phi + \ga_\varphi^\phi - \ga_\varphi(\phi^{-1}(0))$.
\end{enumerate}
\end{lemma}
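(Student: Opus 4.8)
The plan is to prove the four items in the stated order, the point being that only item (1) requires genuine geometric input — namely Proposition \ref{prop:slope} — while (2), (3) and (4) are formal consequences obtained from the chain rule for slopes and the transitivity of $V\act\Q_2$. Throughout one uses that $\SNQ$ is a group, so that $\varphi^{-1},\phi^{-1}\in\SNQ\subseteq\NCV$; in particular every conjugate $\psi^{-1}v\psi$ with $\psi\in\SNQ$, $v\in V$ again lies in $V$, every point of the form $\psi^{-1}x$ with $x\in\Q_2$ again lies in $\Q_2$, and all slopes appearing below are powers of $2$ so that the logarithms are integers.

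For (1): assuming $vx=wx$, set $u:=w^{-1}v\in V$, so that $u\in V_x$. Applying Proposition \ref{prop:slope} to $u$ and $\varphi^{-1}\in\SNQ$ at the point $x$ gives $(\varphi^{-1}u\varphi)'(\varphi^{-1}x)=u'(x)$. I then expand both sides with the chain rule: on the right $(w^{-1}v)'(x)=v'(x)/w'(x)$, using $(w^{-1})'(vx)=(w^{-1})'(wx)=1/w'(x)$; on the left, writing $\varphi^{-1}u\varphi=(\varphi^{-1}w^{-1}\varphi)\circ(\varphi^{-1}v\varphi)$ and noting $\varphi^{-1}v\varphi(\varphi^{-1}x)=\varphi^{-1}(vx)=\varphi^{-1}(wx)=\varphi^{-1}w\varphi(\varphi^{-1}x)$, the inverse-function rule identifies $(\varphi^{-1}w^{-1}\varphi)'$ at that point with $1/(\varphi^{-1}w\varphi)'(\varphi^{-1}x)$, so the left side equals $(\varphi^{-1}v\varphi)'(\varphi^{-1}x)/(\varphi^{-1}w\varphi)'(\varphi^{-1}x)$. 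Taking $\log_2$ and rearranging yields (1). Item (2) is then immediate: if $v_1,v_2\in V$ both send $0$ to $x$, item (1) applied at the point $0$ shows that the expression defining $\ga_\varphi(x)$ does not depend on the choice of $v$; and transitivity of $V\act\Q_2$ guarantees at least one such $v$ exists, so $\ga_\varphi\colon\Q_2\to\Z$ is well defined.

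For (3): fix $x\in\Q_2$, $v\in V$, and pick $u\in V$ with $u0=x$, so $vu$ sends $0$ to $vx$. Writing $\ga_\varphi(vx)$ via $vu$ and $\ga_\varphi(x)$ via $u$ and subtracting, the chain rule collapses the two differences: $\log_2((vu)'(0))-\log_2(u'(0))=\log_2(v'(x))$, and $\log_2((\varphi^{-1}vu\varphi)'(\varphi^{-1}0))-\log_2((\varphi^{-1}u\varphi)'(\varphi^{-1}0))=\log_2((\varphi^{-1}v\varphi)'(\varphi^{-1}x))$, the latter because $\varphi^{-1}u\varphi(\varphi^{-1}0)=\varphi^{-1}x$. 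This is exactly (3). For (4): fix $x$ and $v\in V$ with $v0=x$. Since $(\phi\varphi)^{-1}=\varphi^{-1}\phi^{-1}$, one has $(\phi\varphi)^{-1}v(\phi\varphi)=\varphi^{-1}w\varphi$ with $w:=\phi^{-1}v\phi\in V$, evaluated at $\varphi^{-1}(\phi^{-1}0)$. Now apply (3) to the homeomorphism $\varphi$, the element $w$, and the point $\phi^{-1}0$: since $w\cdot\phi^{-1}0=\phi^{-1}v0=\phi^{-1}x$, this rewrites $\log_2((\varphi^{-1}w\varphi)'(\varphi^{-1}\phi^{-1}0))$ as $\ga_\varphi(\phi^{-1}x)-\ga_\varphi(\phi^{-1}0)+\log_2(w'(\phi^{-1}0))$. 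Substituting into the definition of $\ga_{\phi\varphi}(x)$ and observing that $\log_2(w'(\phi^{-1}0))-\log_2(v'(0))=\log_2((\phi^{-1}v\phi)'(\phi^{-1}0))-\log_2(v'(0))=\ga_\phi(x)$ — valid precisely because $v0=x$ — gives $\ga_{\phi\varphi}(x)=\ga_\phi(x)+\ga_\varphi(\phi^{-1}x)-\ga_\varphi(\phi^{-1}0)$, i.e.\ $\ga_{\phi\varphi}=\ga_\phi+\ga_\varphi^\phi-\ga_\varphi(\phi^{-1}(0))$.

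The main obstacle is item (1): it is the only place where a nontrivial fact about $\SNQ$ enters, and the whole argument rests on the ability to invoke Proposition \ref{prop:slope} — equivalently, on the fact that $\varphi^{-1}$ stabilises $\Q_2$, so that the slope of a conjugate $\varphi^{-1}v\varphi$ at the conjugated (dyadic) point agrees with the slope of $v$ at the original point whenever that point is fixed by $v$. Once (1) is in hand, the remaining work is pure bookkeeping with the chain rule and transitivity; the only care needed is to keep track that every conjugate remains in $V$ and every point at which a slope is evaluated remains in $\Q_2$.
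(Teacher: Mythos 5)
Your proof is correct and follows essentially the same route as the paper: Proposition \ref{prop:slope} supplies the only non-formal input in item (1), and items (2)–(4) are derived from (1) via the chain rule for slopes and transitivity of $V\act\Q_2$, exactly as the paper does. The only cosmetic deviation is that you set $u:=w^{-1}v$ in (1) whereas the paper takes $u:=v^{-1}w$ and writes $w=vu$, but the resulting telescoping of slopes is the same.
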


Note that here the symbol $0$ denotes the usual zero of the dyadic rationals $\Q_2.$

\begin{proof}
Consider $x\in \Q_2, \phi,\varphi\in\SNQ$ and $v,w\in V$.

Proof of (1).

Assume that $vx=wx$ and put $u:=v^{-1}w$.
Since $ux=x$ we can apply Proposition \ref{prop:slope} obtaining that 
$$(\varphi^{-1} u \varphi)'(\varphi^{-1}x) = u'(x).$$
Observe that 
\begin{align*}
\dfrac{(\varphi^{-1} vu \varphi)'(\varphi^{-1}x)}{(vu)'(x)} & = \dfrac{([\varphi^{-1} v \varphi] \circ [\varphi^{-1} u \varphi])'(\varphi^{-1}x)}{v'(ux)\cdot u'(x)} \\
& = \dfrac{[\varphi^{-1} v \varphi]'(\varphi^{-1}x) \cdot [\varphi^{-1} u \varphi]'(\varphi^{-1}x)}{v'(x)\cdot u'(x)} \\
& = \dfrac{[\varphi^{-1} v \varphi]'(\varphi^{-1}x)}{v'(x)}. \\
\end{align*}
Taking the logarithm we obtain (1).

Proof of (2).

We need to show that if $v0=w0$, then 
$$\log_2((\varphi^{-1} v \varphi)'(\varphi^{-1}0)) - \log_2(v'(0)) = \log_2((\varphi^{-1} w \varphi)'(\varphi^{-1}0)) - \log_2(w'(0)).$$
This is a direct consequence of (1) applied to $x=0.$

Proof of (3).

Consider $x\in \Q_2$ such that $w0=x$ and let $v$ be in $V$.
Observe that 
\begin{align*}
\ga_\varphi(vx)-\ga_\varphi(x) = & \ga_\varphi(vw0) - \ga_\varphi(w0)\\
= & \log_2((\varphi^{-1} vw\varphi)'(\varphi^{-1}0)) - \log_2((vw)'(0))\\
& -  \log_2((\varphi^{-1} w\varphi)'(\varphi^{-1}0)) + \log_2(w'(0))\\
= &  \log_2((\varphi^{-1} v\varphi)'(\varphi^{-1}w0)) + \log_2((\varphi^{-1} w\varphi)'(\varphi^{-1}0))- \log_2((vw)'(0))\\
& -  \log_2((\varphi^{-1} w\varphi)'(\varphi^{-1}0)) + \log_2(w'(0))\\
= &  \log_2((\varphi^{-1} v\varphi)'(\varphi^{-1}w0)) - \log_2(v'(w0)) - \log_2(w'(0)) + \log_2(w'(0))\\
= & \log_2((\varphi^{-1} v\varphi)'(\varphi^{-1}w0)) - \log_2(v'(w0))\\
= & \log_2((\varphi^{-1} v\varphi)'(\varphi^{-1}x)) - \log_2(v'(x)).\\
\end{align*}
Proof of (4).

Consider $y\in \Q_2$ and $v\in V$ such that $v0=y.$

Observe that
\begin{align*}
\ga_{\phi\varphi}(y)  = & \ga_{\phi\varphi}(v0) = \log_2((\phi\varphi)^{-1} v (\phi\varphi))'(\varphi^{-1}\phi^{-1}0) - \log_2(v'(0))\\
= & \log_2( [ \varphi^{-1} ( \phi^{-1} v \phi ) \varphi ]'( \varphi^{-1} [ \phi^{-1}  0] ) - \log_2( ( \phi^{-1} v \phi)'(\phi^{-1} 0) ) \\
& +  \log_2( ( \phi^{-1} v \phi)'(\phi^{-1} 0) )  - \log_2(v'(0))\\
= & \log_2( [ \varphi^{-1} ( \phi^{-1} v \phi ) \varphi ]'( \varphi^{-1} [ \phi^{-1}  0 ] ) - \log_2( ( \phi^{-1} v \phi)'(\phi^{-1} 0) ) + \ga_\phi(v0)\\
= & \ga_\varphi( (\phi^{-1} v \phi) (\phi^{-1} 0) ) - \ga_\varphi(\phi^{-1} 0) + \ga_\phi(v0) \text{ using (3) }\\
= & \ga_\varphi( (\phi^{-1} v 0) ) - \ga_\varphi(\phi^{-1} 0) + \ga_\phi(v0)\\
= & [\ga_\varphi^\phi + \ga_\phi](y) - \ga_\varphi(\phi^{-1}0).
\end{align*}

\end{proof}

Let $\nu:\Q\to \Z$ be the dyadic valuation, see Notation \ref{not:valuation}.
For any $\varphi\in\Stab_N(\Q_2)$ define the map $\mu_\varphi:\Q_2\to \Z$ as follows:
\begin{equation}\label{def:muvarphi}\mu_\varphi = \ga_\varphi -\nu^\varphi + \nu.\end{equation}
Note that if $x\in \Q_2$ and $v\in V$ satisfies $v0=x$, then
$$\mu_\varphi(x) = \log_2((\varphi^{-1} v\varphi)'(\varphi^{-1}0)) - \log_2(v'(0)) -\nu(\varphi^{-1} v0) + \nu(v0)$$
and this formula is independent of the choice of $v$ by the previous lemma.
We can now define an action of $\SNQ\times \Aut(\Ga)$ on $Z\Ga\times N_{\ov K}(G)/Z\Ga$ which we will prove to be the action describing the semidirect product decomposition of $\Aut(G).$

\begin{proposition}\label{prop:semidirect}
We have an action by automorphism:
$$\sigma_0: \SNQ\times \Aut(\Ga) \to \Aut(Z\Ga\times \ov K/Z\Ga)$$
described by the formula:
$$\sigma_0(\varphi,\beta)(\zeta, f):= (\beta(\zeta),\ov\beta(f)^\varphi \cdot \beta(\zeta)^{\mu_\varphi})$$
for all 
$$(\zeta,f)\in  Z\Ga\times \ov K / Z\Ga \text{ and } (\varphi,\beta)\in \SNQ\times \Aut(\Ga).$$
This map $\sigma$ induces an injective group morphism 
$$\sigma: \SNQ\times \Aut(\Ga) \to \Aut(Z\Ga\times N_{\ov K }(G)/Z\Ga).$$
\end{proposition}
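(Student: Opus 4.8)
The plan is to verify four things in order: (1) for each $(\varphi,\beta)$ the stated formula defines a group endomorphism of $Z\Ga\times\ov K/Z\Ga$; (2) $\sigma_0$ is multiplicative, whence — since $\sigma_0(\id,\id)=\id$ — each $\sigma_0(\varphi,\beta)$ is an automorphism with inverse $\sigma_0(\varphi^{-1},\beta^{-1})$; (3) $\sigma_0(\varphi,\beta)$ carries the subgroup $Z\Ga\times N_{\ov K}(G)/Z\Ga$ onto itself, so that $\sigma$ makes sense as its restriction; (4) $\sigma$ is injective. All computations reduce to the four elementary identities $[a^\varphi]^\phi=a^{\phi\varphi}$, $\ov\beta(a^\varphi)=(\ov\beta\,a)^\varphi$, $\ov\beta(z^{\mu})=(\beta z)^{\mu}$ and $(z^{\mu})^\phi=z^{\mu^\phi}$ (for $a\in\ov K$, $z\in\Ga$, $\mu\colon\Q_2\to\Z$, $\varphi,\phi\in\SNQ$, $\beta\in\Aut(\Ga)$), which are immediate from the definitions, together with Lemma~\ref{lem:formula}.

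For (1): replacing $f$ by $f\cdot d$ with $d$ a constant $Z\Ga$-valued map alters $\ov\beta(f)^\varphi$ by $\ov\beta(d)^\varphi$, which is again a constant $Z\Ga$-valued map (applying $\ov\beta$ and shifting by $\varphi$ both preserve constants and $Z\Ga$-valuedness), while $\beta(\zeta)^{\mu_\varphi}$ does not involve $f$; so the formula descends mod $Z\Ga$. Multiplicativity of $\sigma_0(\varphi,\beta)$ holds because $\beta(\zeta)^{\mu_\varphi}$ is $Z\Ga$-valued, hence central in $\ov K$, so the central factors slide past the $\ov\beta(f_i)^\varphi$-factors, and $(\zeta,f)\mapsto(\beta(\zeta),\ov\beta(f)^\varphi)$ is visibly multiplicative. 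For (2) one expands $\sigma_0(\phi,\beta_1)\circ\sigma_0(\varphi,\beta_2)$ using the four identities and reduces the second coordinate to $\ov{\beta_1\beta_2}(f)^{\phi\varphi}\cdot(\beta_1\beta_2(\zeta))^{\mu_\phi+\mu_\varphi^\phi}$. By Lemma~\ref{lem:formula}(4), $\ga_{\phi\varphi}=\ga_\phi+\ga_\varphi^\phi-\ga_\varphi(\phi^{-1}(0))$, and since $(\nu^\varphi)^\phi=\nu^{\phi\varphi}$, substituting into $\mu_\bullet=\ga_\bullet-\nu^\bullet+\nu$ gives $\mu_{\phi\varphi}=\mu_\phi+\mu_\varphi^\phi-\ga_\varphi(\phi^{-1}(0))$; the two cocycles differ by a constant, so $(\beta_1\beta_2(\zeta))^{\mu_{\phi\varphi}}$ and $(\beta_1\beta_2(\zeta))^{\mu_\phi+\mu_\varphi^\phi}$ agree in $\ov K/Z\Ga$, proving $\sigma_0(\phi,\beta_1)\circ\sigma_0(\varphi,\beta_2)=\sigma_0(\phi\varphi,\beta_1\beta_2)$.

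For (3): the maps $av\mapsto\ov\beta(a)v$ and $av\mapsto a^\varphi\,\ad_\varphi(v)$ extend to automorphisms of $\ov G=\ov K\rtimes V$ fixing $G$ setwise, hence preserve $N_{\ov K}(G)$, so $\ov\beta(f)^\varphi\in N_{\ov K}(G)$ whenever $f\in N_{\ov K}(G)$. It remains to see $\beta(\zeta)^{\mu_\varphi}\in N_{\ov K}(G)$: being $Z\Ga$-valued it is central in $\ov K$, so conjugating $av\in G$ by it gives $a\cdot\bigl(\beta(\zeta)^{\mu_\varphi}(\beta(\zeta)^{-\mu_\varphi})^v\bigr)\cdot v$, which lies in $G$ as soon as $x\mapsto\mu_\varphi(x)-\mu_\varphi(v^{-1}x)$ has finite support. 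This is the heart of the matter: by Lemma~\ref{lem:formula}(3) one has $\ga_\varphi(vx)-\ga_\varphi(x)=\log_2((\varphi^{-1}v\varphi)'(\varphi^{-1}x))-\log_2(v'(x))$, and the identity $\log_2 w'(y)=\nu(wy)-\nu(y)$ holds off a finite set (established inside the proof of Lemma~\ref{lem:isomtwo}); plugging these into $\mu_\varphi=\ga_\varphi-\nu^\varphi+\nu$ yields $\mu_\varphi(vx)=\mu_\varphi(x)$ for all but finitely many $x$, giving the required finite support. Since $N_{\ov K}(G)$ is a subgroup, $\sigma_0(\varphi,\beta)$ preserves $Z\Ga\times N_{\ov K}(G)/Z\Ga$; applying the same to $(\varphi^{-1},\beta^{-1})$ shows it restricts there to an automorphism, so $\sigma$ is a well-defined group morphism.

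For (4), suppose $\sigma(\varphi,\beta)=\id$. Comparing first coordinates forces $\beta(\zeta)=\zeta$ for all $\zeta\in Z\Ga$. Fix $g\in\Ga\setminus\{e\}$ (possible since $\Ga$ is nontrivial) and, for $x\in\Q_2$, let $g_x\in K\subseteq N_{\ov K}(G)$ be supported at $x$ with value $g$; evaluating on the class of $g_x$ forces $[\beta(g)]_{\varphi(x)}\cdot(g^{-1})_x$ to be a constant $Z\Ga$-valued map, and since its support has at most two points it must be trivial, so $\varphi(x)=x$ and $\beta(g)=g$. Letting $x$ range over the dense set $\Q_2\subset\fC$ and using continuity of $\varphi$ gives $\varphi=\id$, and $\beta(g)=g$ for every nontrivial $g$ gives $\beta=\id$; hence $\ker\sigma$ is trivial. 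The single genuine difficulty is the normaliser claim $\beta(\zeta)^{\mu_\varphi}\in N_{\ov K}(G)$ (and, relatedly, the cocycle identity for $\mu$ used in step (2)): this is exactly where the compatibility between slopes of $V$, the dyadic valuation, and $\varphi\in\SNQ$ — packaged in Lemma~\ref{lem:formula} — is genuinely needed; everything else is bookkeeping with the pointwise and shift operations on $\ov K$.
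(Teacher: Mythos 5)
Your proof is correct and follows the paper's argument step for step: well-definedness on the quotient, multiplicativity via Lemma~\ref{lem:formula}(4) yielding $\mu_{\phi\varphi}=\mu_\phi+\mu_\varphi^\phi$ modulo constants, preservation of the subgroup $Z\Ga\times N_{\ov K}(G)/Z\Ga$ via finite support of $\mu_\varphi-\mu_\varphi^v$, and injectivity by evaluating on the singleton-supported elements $g_x$. The only cosmetic differences are that you invoke $\log_2 w'(y)=\nu(wy)-\nu(y)$ off a finite set directly instead of repackaging it as $p_{\varphi^{-1}v\varphi}-p_v$ as the paper does, and that you make explicit the check $\ov\beta(f)^\varphi\in N_{\ov K}(G)$ which the paper leaves implicit.
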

\begin{remark}
The notation is slightly misleading. 
Given $\beta\in\Aut(\Ga), \varphi\in\SNQ, f\in N_{\ov K}(G), \zeta\in Z\Ga, x\in \Q_2$ we have that $$\ov\beta(f)^\varphi(x):= \beta( f(\varphi^{-1}x))$$ where the superscript $\varphi$ means that we precompose $f$ with the function $\varphi^{-1}$ while
$$\beta(\zeta)^{\mu_\varphi}(x):= \beta( \zeta)^{\mu_\varphi(x)}$$
where the superscript $\mu_\varphi$ stands for $\zeta$ elevated to the power $\mu_\varphi.$
\end{remark}

\begin{proof}
Consider $(\varphi,\beta)\in \SNQ\times \Aut(\Ga)$.
It is clear that the formula
$$\sigma_0(\varphi,\beta):(\zeta,f)\mapsto (\beta(\zeta), \ov\beta(f)^\varphi\cdot \beta(\zeta)^{\mu_\varphi})$$ 
defines a map from $Z\Ga\times \ov K$ to itself since any automorphism of $\Ga$ maps its centre to itself. 
Moreover, this is clearly a group endomorphism of $Z\Ga\times \ov K.$

Consider the quotient map $q:Z\Ga\times \ov K\to Z\Ga\times \ov K/Z\Ga$.
If $(e,f)\in \ker(q)$ (that is $f\in Z\Ga$), then $\sigma_0(e,f)=(e, \ov\beta(f)^\varphi)=(e,\ov\beta(f))\in \ker(q)$ and thus $\sigma_0(\varphi,\beta)$ factorises into an endomorphism written $\sigma(\varphi,\beta)$ of $Z\Ga\times \ov K/Z\Ga.$

Let us check that $\sigma$ is multiplicative.
Observe that if $\varphi,\varphi_0\in \SNQ, \beta,\beta_0\in\Aut(\Ga), \zeta\in Z\Ga, f\in \ov K$, then:
\begin{align*}
\sigma(\varphi,\beta)\circ \sigma(\varphi_0,\beta_0) (\zeta,f) & = \sigma(\varphi,\beta) ( \beta_0(\zeta) , {\ov\beta_0(f)}^{\varphi_0} \cdot \beta_0(\zeta)^{\mu_{\varphi_0}})\\
& = (\beta\beta_0(\zeta) , \ov \beta( {\ov\beta_0(f)}^{\varphi_0} \beta_0(\zeta)^{\mu_{\varphi_0}})^\varphi \cdot \beta( \beta_0(\zeta) )^{\mu_\varphi}) \\
& =  (\beta\beta_0(\zeta) , \ov{(\beta\beta_0)}(f)^{\varphi\varphi_0} \cdot (\beta\beta_0)(\zeta)^{\mu_{\varphi_0}^\varphi} \cdot (\beta\beta_0)(\zeta)^{\mu_\varphi})\\
& =  (\beta\beta_0(\zeta) , \ov{(\beta\beta_0)}(f)^{\varphi\varphi_0} \cdot (\beta\beta_0)(\zeta)^{\mu_{\varphi_0}^\varphi+\mu_\varphi}).
\end{align*}
Now it is sufficient to check:
$$\zeta^{\mu_{\varphi_0}^\varphi+\mu_\varphi} = \zeta^{\mu_{\varphi\varphi_0}} \mod Z\Ga \text{ for all } \zeta\in Z\Ga.$$
Finally, one checks that
$$\mu_{\varphi\varphi_0} = \mu_{\varphi_0}^\varphi + \mu_\varphi \mod \Z.$$
Lemma \ref{lem:formula} Formula (3) implies that $$\mu_{\varphi\varphi_0} =  \mu_{\varphi_0}^\varphi + \mu_\varphi - \mu_{\varphi_0}(\varphi^{-1}(0))$$ giving us the desirable equality modulo the constant maps.

Observe now that $\sigma(e,e)$ is the identity implying that $\sigma$ is a group morphism from $\SNQ\times \Aut(\Ga)$ to $\Aut(Z\Ga\times \ov K/Z\Ga).$

We now show that $\sigma$ acts on the subgroup $Z\Ga\times N_{\ov K}(G)/Z\Ga$.
Consider $\zeta\in Z\Ga$ and $\varphi\in \SNQ.$
We want to show that $\zeta^{\mu_\varphi}$ is normalising $G$.
Consider $av\in G$ and observe that 
$$\ad(\zeta^{\mu_\varphi})(av) = \zeta^{\mu_\varphi - \mu_\varphi^v} av.$$
To conclude it is then sufficient to show that the support of $\mu_\varphi-\mu_\varphi^v$ is finite for all $v\in V.$
Observe that for $x\in \Q_2$ we have:
\begin{align*}
[\mu_\varphi-\mu_\varphi^v](vx)  = & \log_2((\varphi^{-1} v \varphi)'(\varphi^{-1}x)) - \log_2(v'(x))\\
& -\nu(\varphi^{-1} vx) + \nu(vx) + \nu(\varphi^{-1}x) - \nu(x) \text{ by Lemma \ref{lem:formula} } \\
= & [ \log_2((\varphi^{-1} v \varphi)'(\varphi^{-1}x)) - \nu(\varphi^{-1}vx) + \nu(\varphi^{-1} x) ]\\
& - [ \log_2(v'(x)) - \nu(vx) + \nu(x)]\\
= & p_{\varphi^{-1}v\varphi}(\varphi^{-1}x) - p_v(vx).
\end{align*}
Since $p_{\varphi^{-1}v\varphi}$ and $p_v$ are finitely supported so is the map $\mu_\varphi-\mu_\varphi^v$ implying that $\zeta^{\mu_\varphi}\in N_{\ov K}(G).$
It is now easy to deduce that 
$$\sigma_0(\varphi,\beta)( Z\Ga\times N_{\ov K}(G)) = Z\Ga\times N_{\ov K}(G) \text{ for all } (\varphi,\beta)\in\SNQ\times \Aut(\Ga).$$
This implies that $\sigma$ provides an action by automorphisms on $Z\Ga\times N_{\ov K}(G)/Z\Ga.$

It remains to prove that this latter action is faithful.
Assume that $\sigma(\varphi,\beta)$ is the identity automorphism.
Since $\Ga$ is nontrivial by assumption there exists $g\in \Ga, g\neq e$.
Write $g_x\in K$ for the map with support $\{x\}$ taking the value $g$.
Note that $\sigma(\varphi,\beta)(e,g_x) = (e, \beta(g)_{\varphi(x)})$ implying that 
$$g_x = \beta(g)_{\varphi(x)} \mod Z\Ga \text{ for all } x\in\Q_2, g\in \Ga.$$
This implies that $\varphi(x)=x$ for all $x\in \Q_2$ and $\beta(g)=g$ for all $g\in \Ga$ concluding the proof.
\end{proof}

\subsection{Decomposition of automorphisms of $G$}
We start by classifying all automorphisms of $G$ of the following form:
$$av\mapsto a \cdot c_v \cdot v , \ a\in K, v\in V$$
and where $c:V\to K, v\mapsto c_v.$
We call such $c$ cocycles for the following formula that they must satisfy for defining an action:
$$c_{vw} = c_v \cdot c_w^v ,\ v,w\in V.$$

Write $$\Coc(V\act \prod_{\Q_2}\Ga):=\{ c:V\to \prod_{\Q_2}\Ga:\ c_{vw}=c_v \cdot c_w^v \text{ for all } v,w\in V\}.$$
We will be particularly interested by the subsets $\Coc(V\act \prod_{\Q_2}Z\Ga)$ and $\Coc(V\act K)$.

\begin{proposition}\label{prop:cocycle Abelian}
Let $\La$ be an abelian group and consider the set of cocycles
$$\Coc=\Coc(V\act \prod_{\Q_2}\La):=\{ c:V\to \prod_{\Q_2}\La:\ c_{vw}=c_v \cdot c_w^v,\ \forall v,w\in V\}.$$
\begin{enumerate}
\item Equipped with the product 
$$(c\cdot d)_v(x):=c_v(x) d_v(x) , \ c,d\in \Coc, v\in V, x\in\Q_2$$
the set $\Coc$ is an abelian group.
\item Given any $\zeta\in \La$ the formula
$$s(\zeta)_v(x):= \zeta^{\log_2(v'(v^{-1}x))} , \ v\in V, x\in \Q_2$$
defines a cocycle that we call the slope cocycle associated to $\zeta$;
\item
For any $c\in \Coc$ there exists $\zeta\in\La$ and $f\in \prod_{\Q_2}\La$ satisfying
$$c_v= s(\zeta)_v \cdot f (f^v)^{-1} , \ v\in V.$$
The pair $(\zeta,f)$ is unique up to multiplying $f$ by a constant map.
\item The assignment $c\mapsto (\zeta,f)$ realises a group isomorphism from $\Coc$ onto $\La\times \left(\prod_{\Q_2}\La\right)/\La.$
\end{enumerate}
\end{proposition}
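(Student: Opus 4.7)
Claims (1) and (2) are formal verifications that I would dispatch quickly. For (1), the pointwise product of two cocycles is again a cocycle precisely because $\La$ is abelian (allowing the swap $c_w^v \cdot d_v = d_v \cdot c_w^v$ inside the expansion of $(c\cdot d)_{vw}$), pointwise inverses of cocycles are cocycles, the constant map $e$ is a cocycle, and commutativity of $\Coc$ is inherited from $\La$. For (2), I would apply the chain rule $(vw)'(y) = v'(wy)\cdot w'(y)$ at $y=(vw)^{-1}x$ to obtain $\log_2((vw)'((vw)^{-1}x)) = \log_2(v'(v^{-1}x)) + \log_2(w'(w^{-1}v^{-1}x))$, which is exactly the exponent of $\zeta$ in $s(\zeta)_v(x)\cdot s(\zeta)_w^v(x)$. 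The heart of the proposition is (3); from it, (4) follows since the assignment is pointwise (hence automatically a group homomorphism), and bijectivity is existence plus uniqueness in (3).

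\textbf{Existence in (3).} I would first extract $\zeta$ by restricting $c$ to the stabilizer $V_0$ of $0\in\Q_2$: for $v,w\in V_0$ one has $c_w^v(0)=c_w(v^{-1}0)=c_w(0)$, so the map $v\mapsto c_v(0)$ is a group morphism $V_0\to\La$. Since $\La$ is abelian this kills the derived subgroup $V_0'$; by Lemma \ref{lem:slopeone} the quotient $V_0/V_0'$ is isomorphic to $\Z$ via $v\mapsto\log_2(v'(0))$, so there is a unique $\zeta\in\La$ with $c_v(0) = \zeta^{\log_2(v'(0))}$ for every $v\in V_0$. To propagate this identity to arbitrary $x\in\Q_2$ and $u\in V_x$, I would use transitivity of $V\act\Q_2$ to pick $w\in V$ with $w\cdot 0=x$ and apply the cocycle identity to both sides of $w\cdot(w^{-1}uw) = u\cdot w$, evaluating at $x$: abelianness cancels the common factor $c_w(x)$ and yields $c_u(x) = c_{w^{-1}uw}(0) = \zeta^{\log_2(u'(x))}$ after a chain-rule computation showing $(w^{-1}uw)'(0) = u'(x)$. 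Setting $d_v := c_v \cdot s(\zeta)_v^{-1}$ therefore produces a cocycle with $d_u(x)=e$ whenever $ux=x$.

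\textbf{Construction of $f$ and uniqueness.} For each $x\in\Q_2$ I would choose $w_x\in V$ with $w_x\cdot 0=x$ (with $w_0=e$) and set $f(x):= d_{w_x}(x)$. Independence of the choice of $w_x$ follows by applying the cocycle identity to $w'_x = (w'_x w_x^{-1}) w_x$, where $w'_x w_x^{-1}\in V_x$, and using the vanishing of $d$ on stabilizers. To verify $d_v(x) = f(x) f(v^{-1}x)^{-1}$, I would use this well-definedness to select the representative $w_{v^{-1}x} := v^{-1} w_x$, then expand $d_{w_x} = d_v \cdot d_{v^{-1}w_x}^v$ and evaluate at $x$: the terms involving $d_{v^{-1}w_x}$ cancel and only $d_v(x)$ survives. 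For uniqueness, if $s(\zeta)\cdot\delta(f) = s(\zeta')\cdot\delta(f')$ (writing $\delta(f)_v := f\cdot (f^v)^{-1}$), then $s(\zeta\zeta'^{-1}) = \delta(ff'^{-1})$; evaluating at $0$ on a $v\in V_0$ with $v'(0)=2$ forces $\zeta=\zeta'$, and then $\delta(ff'^{-1})=e$ combined with transitivity of $V\act\Q_2$ forces $ff'^{-1}$ to be a constant map.

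The main obstacle I anticipate is the transfer of the identity $c_v(0) = \zeta^{\log_2(v'(0))}$ from the base point $0$ to arbitrary $x\in\Q_2$, since this is the step that crucially mixes the cocycle identity, transitivity of $V\act\Q_2$, and abelianness of $\La$ (to cancel the unwanted $c_w(x)$ factors). Once this extension is in place, the construction of $f$ and the uniqueness argument are routine bookkeeping with the cocycle identity, and (4) becomes a direct consequence.
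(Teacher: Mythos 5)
Your proposal is correct and follows essentially the same route as the paper: extract $\zeta$ via the morphism $V_0\to\La$, $v\mapsto c_v(0)$, identify $V_0/V_0'\cong\Z$ through Lemma \ref{lem:slopeone}, transfer the relation to all of $\Q_2$ by conjugation plus the chain rule, construct $f$ via a transversal $w_x\in V_{x,0}$, and check uniqueness by evaluating at $0$. The only cosmetic difference is that the paper defines $\zeta_x$ at each basepoint $x$ and then proves independence, whereas you fix $0$ and propagate outward; the underlying computation is identical.
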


\begin{proof}
Consider $c,d\in \Coc$ and $v,w\in V.$
Then
$$(cd)_{vw} = c_{vw}\cdot d_{vw} = c_v\cdot c_w^v \cdot d_v \cdot d_w^v = (c_v d_v) (c_w^v d_w^v) = (cd)_v\cdot (cd)_w^v.$$
Therefore, $cd$ is a cocycle.
The cocycle $c$ satisfying $c_v(x)=e$ for all $v\in V, x\in\Q_2$ is neutral for the multiplication. 
Fix $d\in \Coc$ and define $b_v(x):= d_v(x)^{-1}, v\in V, x\in\Q_2$. 
Observe that
\begin{align*}
b_{vw}(vwx) & = d_{vw}(vwx)^{-1} = \left(d_v(vwx) d_w(wx)\right)^{-1} = d_w(wx)^{-1} d_v(vwx)^{-1} \\
& = d_v(vwx)^{-1}  d_w(wx)^{-1} = b_v(vwx) b_w(wx) = (b_v\cdot b_w^v)(vwx),
\end{align*}
for all $v,w\in V, x\in \Q_2$.
Therefore, $b$ is in $\Coc$ and it is easy to see that $b$ is an inverse of $d$ inside $\Coc$. 
Therefore, $\Coc$ is a group which is clearly abelian.

The fact that $(vw)'(x) = v'(wx) \cdot w'(x)$ for $v,w\in V, x\in\Q_2$ implies that $s(\zeta)$ is a cocycle for $\zeta\in \La.$

Fix $c\in \Coc$. We are going to decompose $c$ such that 
$$c_v=s(\zeta)_v\cdot f(f^v)^{-1}, \ v\in V$$ 
for some $\zeta\in \La, f\in\prod_{\Q_2}\La.$
Fix $x\in \Q_2$ and write $V_x:=\{v\in V:\ vx=x\}$ with derived group $V_x'.$
Consider the map $$P_x: V_x\mapsto \La, \ v\mapsto c_v(x)$$
and note that $P_x$ is a group morphism valued in an abelian group and thus factorises into a group morphism 
$$\ov P_x: V_x/V_x'\to \La.$$
Since $x\in\Q_2$ we can find $v\in V_x$ such that $v'(x)=2.$ 
(This would not be the case for general $x$ in Cantor space, see \cite{Bleak-Lanoue10} or \cite{Brothier20-2} for details.) 
This implies that the map $$\ell_x:V_x\to \Z,\ v\mapsto \log_2(v'(x))$$ is surjective. 
By Lemma \ref{lem:slopeone} we have that $V_x'=\ker(\ell_x)$ and thus $\ell_x$ factorises into an isomorphism 
$$\ov \ell_x: V_x/V_x'\to \Z.$$
Write $1_\Z\in \Z$ for the positive generator of $\Z$ and consider $\zeta_x:= \ov P_x\circ \ov\ell_x^{-1}(1_\Z)$ which is in $\La$.
Observe that
$$c_v(x) = \zeta_x^{\log_2(v'(x))} \text{ for all } v\in V_x.$$

Let us show that $\zeta_x$ does not depend on $x\in\Q_2.$
Consider $x,y\in \Q_2$.
There exists $w\in V$ such that $wx=y$ since $V$ acts transitively on $\Q_2.$
The adjoint map $$\ad_w:V\to V, v\mapsto wvw^{-1}$$ restricts into an isomorphism from $V_x$ onto $V_{y}.$
Fix such a $w$ and take $v\in V_x.$
Observe that
\begin{align*}
c_{wvw^{-1}}(y) & = c_{wvw^{-1}}(wx) = c_{wv}(wx) c_{w^{-1}}(x) = c_w(wx) c_v(x) c_{w^{-1}}(x)\\
& = c_v(x) [ c_w(wx) c_{w^{-1}}(x) ] = c_v(x) [c_wc_w^w](wx)\\
& = c_v(x) c_e(wx) =c_v(x).
\end{align*}
We obtain that $\zeta_y^{\log_2( \ad_w(v)'(y))} = \zeta_x^{\log_2(v'(x))}$ for all $v\in V_x.$
Now observe that $\ad_w(v)'(y) = v'(x)$ if $y=wx, v\in V_x$ by the chain rule applied to elements of Thompson's group $V$.
Choosing $v\in V_x$ with slope 2 at $x$ we obtain that $\zeta_x=\zeta_y$.
We have proven that there exists a unique $\zeta\in \La$ such that 
$$c_v(x) = \zeta^{\log_2(v'(x))} \text{ for all } x\in\Q_2, v\in V_x.$$

Put $$s(\zeta)_v(x):= \zeta^{\log_2(v'(v^{-1}x))}, v\in V, x\in \Q_2.$$
We write $c= s(\zeta) \cdot d$ where $d:=c \cdot s(\zeta)^{-1}\in \Coc.$
We are going to show that there exists $f:\Q_2\to\La$ satisfying
$$d_v = f(f^v)^{-1} \text{ for all } v.$$ 
Consider $x\in\Q_2$ and $v\in V_x$.
We have that $c_v(x) = s(\zeta)_v(x)$ and thus $d_v(x)=e.$
Given $x,y\in \Q_2$ we put $V_{y,x}:=\{v\in V:\ vx=y\}$.
If $v,w\in V_{y,x}$, then $w^{-1}v\in V_x$ implying that $d_{w^{-1}v}(x)=e.$
We obtain that 
$$d_v(y) = d_{w\cdot w^{-1}v} (y) = d_w(y) \cdot d_{w^{-1}v}^w(y) = d_w(y) \cdot d_{w^{-1}v}(x) = d_w(y).$$
Therefore, $u\in V_{y,x}\mapsto d_u(y)\in \La$ is constant equal to a certain $g_{y,x}\in\La.$
The cocycle formula and the fact that $V\act \Q_2$ is transitive imply that $g_{z,x}=g_{z,y} \cdot g_{y,x}$ for all $x,y,z\in \Q_2$.
Since $d_v(x)=e$ for all $v\in V_x$, we obtain that $g_{x,x}=e$ and thus $g_{y,x}^{-1} = g_{x,y}$ for all $x,y\in\Q_2.$
Fix a point of $\Q_2$ say $0$ and consider the map $f:\Q_2\to \La, f(x):=g_{x,0}.$
Observe that $f(y)f(x)^{-1} = g_{y,0} g_{x,0}^{-1} = g_{y,0} g_{0,x} = g_{y,x}$ for all $x,y\in \Q_2$ implying that
$$[f(f^v)^{-1}](x) = f(x) f(v^{-1}x)^{-1} = g_{x,v^{-1}x} = d_v(x) \text{ for all } v\in V, x\in \Q_2$$
since $v\in V_{x,v^{-1}x}.$
We have proven that $c_v = s(\zeta)_v\cdot f(f^v)^{-1}$ for all $v\in V.$

Assume that $c_v = s(\xi)_v\cdot h(h^v)^{-1}$ for some $\xi\in\La$ and $h:\Q_2\to\La.$
For any $v\in V,x\in\Q_2$ we have 
$$\zeta^{\log_2(v'(x))} f(vx)f(x)^{-1} = \xi^{\log_2(v'(x))} h(vx)h(x)^{-1}.$$ 
If we consider $x=0$ and $v\in V$ that dilates $[0,1/4]$ into $[0,1/2]$ we obtain that $\zeta=\xi.$
Given $x=0$ and $\tau$ the translation $z\mapsto z+y$ with $y\in\Q_2$ we obtain that $f(y)=h(y)h(0)^{-1}.$
Therefore, $f = h \cdot a$ where $a:\Q_2\to \La$ is the constant map with value $h(0)^{-1}.$

Let us show that the assignment 
$c\mapsto (\zeta, f)$
is an isomorphism from $\Coc$ onto $\La\times \left(\prod_{\Q_2}\La\right)/\La.$
Note that $\zeta\in \La\mapsto s(\zeta)$ and $f\in\prod_{\Q_2}\La\mapsto (v\mapsto f(f^v)^{-1})$ are group morphisms since $\La$ is abelian.
The first one is injective and the second has for kernel the constant functions since $V\act\Q_2$ is transitive.
Therefore, 
$$\La\times \left(\prod_{\Q_2}\La\right)/\La\to \Coc, \ (\zeta,f)\mapsto \left(v\mapsto s(\zeta)_v\cdot f(f^v)^{-1}\right)$$
is an injective group morphism and we have proven that it is surjective.
This finishes the proof.
\end{proof}

Define the semidirect products induced by the action $\sigma$ of the previous section that is:
$$\sigma(\varphi,\beta) (\zeta,f) := ( \beta(\zeta) , (\beta(\zeta))^{\mu_\varphi} \cdot (\ov\beta(f))^\varphi),$$ 
where $\varphi\in\SNQ, \beta\in\Aut(\Ga), \zeta\in Z\Ga, f\in N_{\ov K}(G)/Z\Ga$ and such that $$\mu_\varphi(v0) = \log_2((\varphi^{-1} v \varphi)'(\varphi^{-1}0) - \log_2(v'(0)) -\nu(\varphi^{-1}v0) + \nu(v0) ,v\in V.$$ 
Recall that the formula of $\mu_\varphi(v0)$ only depends on $\varphi$ and $v0$ (but not on $v$).
We put $$Q:=\left( Z\Ga\times N_{\ov K}(G) /Z\Ga \right) \rtimes \left(\SNQ\times \Aut(\Ga) \right).$$

\begin{theorem}\label{theo:isomB}
The map $$\Xi: Q\to \Aut(G),\ (\zeta,f,\varphi,\beta)\mapsto E_\zeta \ad(f) A_{\varphi,\beta}$$
is a surjective group morphism with kernel the normal subgroup
$$M:= \{ (e, \ov g, e, \ad(g^{-1})): \ g\in \Ga\}$$
where $\ov g:\Q_2\to \Ga$ is the constant map equal to $g$ everywhere.
\end{theorem}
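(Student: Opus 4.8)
The plan is to prove that $\Xi$ is a well-defined group morphism, that it is surjective, and that its kernel equals $M$. For well-definedness and multiplicativity, I would verify the cocycle relations among the four elementary families of automorphisms $E_\zeta$, $\ad(f)$, and $A_{\varphi,\beta}$ established in the preceding subsections. The key relations to check are: $A_{\varphi,\beta}$ normalises the copies of $N_{\ov K}(G)/Z\Ga$ and $Z\Ga$ inside $\Aut(G)$ according to the action $\sigma$ of Proposition \ref{prop:semidirect}; the subgroups $E(Z\Ga)$ and $\ad(N_{\ov K}(G)/Z\Ga)$ commute (which follows since $\zeta^{p_v}$ is centrally valued and any $f\in N_{\ov K}(G)$ conjugates $av$ to $(faf^{-1})(f(f^v)^{-1})v$, and centrally valued maps commute with everything pointwise); and $E(Z\Ga)\cdot \ad(N_{\ov K}(G)/Z\Ga)$ together form the normal factor $Z\Ga\times N_{\ov K}(G)/Z\Ga$ of the semidirect product $Q$. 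Once these identities are recorded, $\Xi$ is a group morphism essentially by the universal property of $Q$ as an (iterated) semidirect product; the computation that $\Xi(q_1 q_2) = \Xi(q_1)\Xi(q_2)$ reduces to the formula $\sigma(\varphi,\beta)(\zeta,f) = (\beta(\zeta), \beta(\zeta)^{\mu_\varphi}\ov\beta(f)^\varphi)$ together with Lemma \ref{lem:formula}(3) controlling how $\mu_\varphi$ composes.

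The heart of the argument, and the step I expect to be the main obstacle, is surjectivity: showing every $\theta\in\Aut(G)$ decomposes as $E_\zeta\ad(f)A_{\varphi,\beta}$. I would follow the scheme sketched in the introduction. Start with an arbitrary $\theta\in\Aut(G)$. Since $K\lhd G$ is characteristic by Theorem \ref{theo:KinG}, write $\theta(av)=\kappa(a)\cdot c_v\cdot\phi_v$ with $\kappa\in\Aut(K)$, $\phi\in\Aut(V)$, $c:V\to K$ a cocycle; by Rubin's theorem $\phi=\ad_\varphi$ for a unique $\varphi\in\NCV$, and by Proposition \ref{prop:supportone} (applied with $\Ga=\ti\Ga$) we get $\varphi\in\SNQ$ and $\kappa(a)(\varphi(x))=\kappa_x(a(x))$ for automorphisms $\kappa_x\in\Aut(\Ga)$. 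Precomposing $\theta$ with $A_{\varphi,\beta}^{-1}$ for a suitable $\beta$ (we may take $\beta=\kappa_{x_0}$ at a basepoint $x_0$) reduces to the case $\varphi=\id$ and $\kappa_{x_0}=\id$; then the argument of Theorem \ref{th:isomGalpha} applied to $\al=\ti\al=\id_\Ga$ forces $\kappa_x\circ\id=\ad(c_v(\varphi(x)))\circ\id\circ\kappa_x$, so each $\kappa_x$ is inner, say $\kappa_x=\ad(h(x))$; composing with $\ad(h)^{-1}$ for $h\in\ov K$ (one checks $h\in N_{\ov K}(G)$ since $\ad(h)^{-1}\theta$ must preserve $K$ and act trivially on each fibre) we may assume $\kappa=\id_K$. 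We are left with $\theta$ of the form $av\mapsto a\cdot c_v\cdot v$ with $c$ a cocycle for $V\act K$, and the relation forced on a pure such automorphism shows $c_v(x)\in Z\Ga$ for all $v,x$. Now $c\in\Coc(V\act\oplus_{\Q_2}Z\Ga)\subset\Coc(V\act\prod_{\Q_2}Z\Ga)$, so Proposition \ref{prop:cocycle Abelian}(3) gives $\zeta\in Z\Ga$ and $f:\Q_2\to Z\Ga$ with $c_v = s(\zeta)_v\cdot f(f^v)^{-1}$. Replacing the slope cocycle $s(\zeta)_v=\zeta^{\log_2(v'(v^{-1}\cdot))}$ by $\zeta^{p_v}$ costs only a coboundary by a constant-free map: since $p_v = \log_2(v'(v^{-1}\cdot)) + \nu - \nu^v$ and $\nu-\nu^v$ is itself of coboundary shape (with the non-finitely-supported $\nu$ absorbed into the $f$-part), we get $c_v = \zeta^{p_v}\cdot g(g^v)^{-1}$ for some $g\in\prod_{\Q_2}Z\Ga$; here the first claim of Lemma \ref{lem:isomtwo} guarantees $\zeta^{p_v}$ is finitely supported so $g(g^v)^{-1}\in\oplus_{\Q_2}Z\Ga$ for all $v$, which forces $g\in N_{\ov K}(G)$. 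Hence $\theta=\ad(g)E_\zeta$ up to the reductions above, giving the decomposition $\theta = E_{\zeta'}\ad(f')A_{\varphi,\beta}$ after collecting terms and commuting the (mutually commuting) factors $E$ and $\ad$.

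Finally I would compute $\ker\Xi$. Suppose $E_\zeta\ad(f)A_{\varphi,\beta}=\id_G$. Evaluating on $V\subset G$ forces $\ad_\varphi=\id_V$, hence $\varphi=\id$; evaluating on the generators $g_x\in K$ (the map supported at $x$ with value $g$) gives $\beta(g)_{x} = $ (conjugate of $g_x$ by the constant map $f$) $\cdot(\text{central correction})$, and comparing supports and values pins down $\varphi=\id$ already used, then $\beta=\ad(g^{-1})$ and $f=\ov g$ constant with value $g$ modulo $Z\Ga$, and $\zeta=e$ — precisely because on $K$ the automorphism $E_e\ad(\ov g)A_{\id,\ad(g^{-1})}$ acts as $a\mapsto \ov g\, \ov{\ad(g^{-1})}(a)\,\ov g^{-1}$ which is the identity, while on $V$ it fixes $v$. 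Conversely each $(e,\ov g,e,\ad(g^{-1}))$ is visibly in the kernel by the same computation, so $\ker\Xi = M$. That $M$ is normal in $Q$ is a short check using $\sigma(\id,\beta)(e,\ov g)=(e,\ov{\beta(g)})$ and $\beta\ad(g^{-1})\beta^{-1}=\ad(\beta(g^{-1}))$. The main technical care throughout is bookkeeping the non-finitely-supported valuation $\nu$ — it appears only through the finitely-supported combinations $p_v$ and $\mu_\varphi$ — which is exactly what the first claim of Lemma \ref{lem:isomtwo} and Lemma \ref{lem:formula} are designed to handle.
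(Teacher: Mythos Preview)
Your overall architecture matches the paper's proof: verify $\Xi$ is a morphism via the conjugation identity $A_{\varphi,\beta}\, E_\zeta\, \ad(f)\, A_{\varphi,\beta}^{-1} = \Xi(\sigma(\varphi,\beta)(\zeta,f))$ (which reduces to $p^\varphi_{\varphi^{-1}v\varphi} = p_v + \mu_\varphi - \mu_\varphi^v$), compute the kernel by evaluating on $h_x\in K$ and on $v\in V$, and prove surjectivity by successively stripping off $A_{\varphi,\beta}$, the inner part of $\kappa$, and the residual cocycle via Proposition~\ref{prop:cocycle Abelian}.

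There is, however, a genuine gap in your surjectivity argument. After reducing to $\varphi=\id$ and $\kappa_{x_0}=\id$, you choose $h\in\ov K$ with $\kappa_x=\ad(h(x))$ and assert $h\in N_{\ov K}(G)$, justifying this by ``$\ad(h)^{-1}\theta$ must preserve $K$ and act trivially on each fibre.'' That reasoning is circular: knowing $\ad(h)^{-1}\theta$ restricts to the identity on $K$ says nothing about whether it sends $V$ into $G$, i.e.\ whether the new cocycle $c'_v = h^{-1}c_v\, h^v$ is finitely supported. Since $h$ is only determined up to an arbitrary $\prod_{\Q_2}Z\Ga$-valued function, a generic choice gives $h\notin N_{\ov K}(G)$. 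This breaks the next step too: your conclusion ``$\zeta^{p_v}$ is finitely supported so $g(g^v)^{-1}\in\oplus_{\Q_2}Z\Ga$, hence $g\in N_{\ov K}(G)$'' relies on $c'_v\in K$, which you no longer know.

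The paper sidesteps this by \emph{not} composing with $\ad(h)^{-1}$ as a separate reduction. It keeps $\kappa$ in play, deduces from $\ad(h(vx))=\ad(c_v(vx))\ad(h(x))$ that $c_v = d_v\cdot h(h^v)^{-1}$ with $d$ a cocycle valued in $\prod_{\Q_2}Z\Ga$, applies Proposition~\ref{prop:cocycle Abelian} to $d$ to obtain $d_v = s(\zeta)_v\cdot f_0(f_0^v)^{-1}$, and only then assembles $f := h\cdot f_0\cdot \zeta^\nu$. At that point the automorphism $\theta$ (already reduced by the \emph{legitimate} operations $A_{\varphi,e}$, $A_{e,\beta}$, $E_\zeta$, all of which lie in $\Aut(G)$) equals $\ad(f)$ on $G$, and since the left side is in $\Aut(G)$ one concludes $f\in N_{\ov K}(G)$. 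Your argument is easily repaired the same way: do not claim $h\in N_{\ov K}(G)$ or $g\in N_{\ov K}(G)$ individually; combine them first and deduce membership in $N_{\ov K}(G)$ for the product from the fact that every other factor of $\theta$ already lies in $\Aut(G)$.
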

\begin{proof}
Here is the plan of the proof.
First, we show that $\Xi$ is a morphism. The delicate points resides in checking that the commutation relations between $Z\Ga$ and $\SNQ$ are preserved by $\Xi$.
Second, we compute the kernel of $\Xi$. This is done by testing certain elements of $G$ and by using the transitivity of $V\act \Q_2$.
Third, we prove that $\Xi$ is surjective. This is the most difficult part of the proof. We start by considering an automorphism $\theta$ that we decompose as: 
$$\theta(av)=\kappa(a)\cdot c_v\cdot \ad_\varphi(v), \ a\in K, v\in V$$ for some $\kappa\in\Aut(K), c:V\to K, \varphi\in\NCV.$
We multiply $\theta$ by elements in the range of $\Xi$ until we obtain the trivial automorphism. 
Using this method we successively ``remove'' $\varphi$, $\kappa$ and $c_v$ using essentially that $\theta$ is spatial and the classification of cocycles valued in an abelian group.

Let us prove that $\Xi$ is a group morphism.
We have already checked that the maps $(\varphi,\beta)\mapsto A_{\varphi,\beta}$, $f\mapsto \ad(f)$ and  $\zeta\mapsto E_\zeta$ are injective group morphisms.
We are reduced to verify that $E_\zeta$ commutes with $\ad(f)$ and that 
$$A_{\varphi,\beta}\circ  E_\zeta \circ \ad(f)  \circ A_{\varphi,\beta}^{-1} = E_{\beta(\zeta)}\circ \ad(\ov\beta(f)^\varphi) \circ \ad(\beta(\zeta)^{\mu_\varphi})$$ for all $\zeta\in Z\Ga, f\in N_{\ov K}(G), \varphi\in\SNQ, \beta\in\Aut(\Ga).$
The first statement is proved by observing that given $\zeta\in Z\Ga$ we have that $\zeta^{p_v}$ commutes with any $f\in \ov K$ and thus for all $v\in V$ implying that $E_\zeta$ and $\ad(f)$ commute. 
Choose such a quadruple $(\zeta,f,\varphi,\beta)$ and an element $av\in G$ with $a\in K, v\in V.$
Observe that 
\begin{align*}
A_{\varphi,\beta}\circ  E_\zeta \circ \ad(f)  \circ A_{\varphi,\beta}^{-1} (av) & = A_{\varphi,\beta}\circ  E_\zeta \circ \ad(f) (\ov{\beta^{-1}}(a)\circ \varphi \cdot (\varphi^{-1} v \varphi))\\
& = A_{\varphi,\beta}\circ  E_\zeta (f\cdot \ov{\beta^{-1}}(a)\circ \varphi \cdot (\varphi^{-1} v \varphi)\cdot f^{-1})\\
& = A_{\varphi,\beta} (f\cdot  \ov{\beta^{-1}}(a)\circ \varphi \cdot \zeta^{p_{\varphi^{-1} v \varphi}}\cdot (\varphi^{-1} v \varphi) \cdot f^{-1})\\
& = \ov{\beta}(f)^\varphi \cdot  a \cdot \beta(\zeta)^{p^\varphi_{\varphi^{-1} v \varphi}}\cdot v \cdot ( \ov\beta( f)^\varphi)^{-1})\\
& = \ad(\ov{\beta}(f)^\varphi) (  a \cdot \beta(\zeta)^{p^\varphi_{\varphi^{-1} v \varphi}}\cdot v).
\end{align*}
On the other hand:
\begin{align*}
\Xi(\sigma(\varphi,\beta)(\zeta,f))(av) & = \Xi( ( \beta(\zeta) , \ov\beta(f)^\varphi, \beta(\zeta)^{\mu_\varphi}) (av)\\
& = \ad(\ov\beta(f)^\varphi) ( a \cdot \beta(\zeta)^{p_v + \mu_\varphi - \mu_\varphi^v} \cdot v).
\end{align*}

By Proposition \ref{prop:semidirect} we have the equality
$$p^\varphi_{\varphi^{-1} v \varphi} = p_v + \mu_\varphi - \mu_\varphi^v$$
implying that 
$$A_{\varphi,\beta}\circ  E_\zeta \circ \ad(f)  \circ A_{\varphi,\beta}^{-1} =\Xi(\sigma(\varphi,\beta)(\zeta,f)).$$
Therefore, $\Xi$ is a group morphism.

Let us show that the kernel of $\Xi$ is the normal subgroup $M$ described in the theorem.
Consider $(\zeta,f,\varphi,\beta)\in Q$ and assume that $\theta:=\Xi(\zeta,f,\varphi,\beta)$ is the trivial automorphism.
Consider $h\in\Ga, h\neq e$ and $h_x\in K$ the map supported in $\{x\}$ taking the value $h.$
Note that such a $h$ exists since $\Ga$ is nontrivial.
Observe that 
\begin{align*}
\theta(h_x) & = E_\zeta\circ\ad(f)\circ A_{\varphi,\beta}(h_x) = E_\zeta \circ \ad(f) \beta(h)_{\varphi(x)}\\
& = E_\zeta [f(\varphi(x)) \beta(h) f(\varphi(x))^{-1}]_{\varphi(x)}\\
& = [f(\varphi(x)) \beta(h) f(\varphi(x))^{-1}]_{\varphi(x)}.
\end{align*}
In particular, $\theta(h_x)$ has support $\{\varphi(x)\}$ but since $\theta(h_x)=h_x$ this latter support is equal to the support of $h_x$ which is $\{x\}$ implying that $\varphi=\id.$
We obtain that 
\begin{equation}\label{h} h = f(x) \beta(h) f(x)^{-1} \text{ for all } h\in \Ga, x\in\Q_2.\end{equation}
Consider $v\in V$ and observe that 
\begin{equation}\label{v}
v = \theta(v) =  \zeta^{p_v}\cdot f \cdot v \cdot f^{-1} = \zeta^{p_v}\cdot f (f^v)^{-1} \cdot v.\end{equation}
Therefore,
$$f(x) \zeta^{p_v(x)} = f(v^{-1}x) \text{ for all } v\in V, x\in \Q_2.$$
Fix $x\in\Q_2$ and choose $v\in V$ such that $v(x)=x$ and $v'(x)=2.$
We obtain that $$f(x) \zeta = f(x)$$ and thus $\zeta=e$.
Equation \eqref{v} becomes
$$f=f^v \text{ for all } v\in V.$$
Since $V\act \Q_2$ is transitive we deduce that $f:\Q_2\to \Ga$ is constant.
There exists $g\in \Ga$ such that $f(x)=g$ for all $x\in \Q_2$ and thus $f=\ov g.$
Using \eqref{h} we obtain that $g \beta(h) g^{-1} = h$ for all $h\in\Ga$ implying that $\beta=\ad(g^{-1}).$
Conversely, it is easy to see that $\Xi(e,\ov g, e,e) = \Xi(e,e,e,\ad(g))$ implying that $\ker(\Xi)=M.$

It remains to show that $\Xi$ is surjective.
Fix an automorphism $\theta\in \Aut(G).$

By Theorem \ref{theo:KinG}, we have that $\theta(K) = K.$
Moreover, Proposition \ref{prop:supportone} implies that there exists $\kappa\in \Aut(K), \varphi\in\SNQ$ and $c:V\to K$ such that 
$$\theta(av) = \kappa(a)\cdot c_v \cdot \ad_\varphi(v) \text{ for all } a\in K, v\in V.$$

Up to a composition with $A_{\varphi,e}=\Xi(e,e,\varphi,e)$ we can assume that $\varphi$ is the identity transformation and thus $$\theta(av) = \kappa(a)\cdot c_v \cdot v, \ a\in K, v\in V.$$

By Proposition \ref{prop:supportone} we have that $\supp(\kappa(a)) = \supp(a)$ for all $a\in K$ since $\varphi$ is trivial.
Moreover, $\kappa$ can be decomposed as a product of automorphisms:
$$\kappa = \prod_{x\in\Q_2} \kappa_x\in\prod_{Q_2}\Aut(\Ga)$$
such that $$\kappa(a)(x) = \kappa_x(a(x)) \text{ for all } a \in K, x\in\Q_2.$$

Now given $v\in V$ we have that 
$$\kappa(vav^{-1})(vx) = [c_v \cdot v \cdot \kappa(a)\cdot  v^{-1}\cdot c_v^{-1}](vx) = \ad(c_v(vx))[\kappa(a)(x)]= \ad(c_v(vx))[\kappa_x(a(x))].$$
This is equal to $\kappa_{vx}(a(x)).$
Therefore, 
\begin{equation}\label{eq:fund} 
\kappa_{vx} = \ad(c_v(vx))\circ \kappa_x \text{ for all } x\in\Q_2, v\in V.
\end{equation}
Fix $x=0$ in $\Q_2$ and write $\beta:=\kappa_{0}$.
Up to a composition with $A_{e,\beta}=\Xi(e,e,e,\beta)$ we can now assume that $\kappa_0=\id$.
This implies that $\kappa_{v0}=\ad(c_v(v0))$ for all $v\in V$. 
Therefore, since $V\act\Q_2$ is transitive we deduce that $\kappa_x$ is an inner automorphism of $\Ga$ for each $x\in\Q_2.$

We now take care of the cocyle part $c$.
For each $x\in \Q_2$ choose $v\in V$ such that $v0=x$ and put $h(x):=c_v(v0).$
Consider the map $h:\Q_2\to \Ga, x\mapsto h(x)$ and observe that $\kappa(a) = h a h^{-1}$ for all $a\in \oplus_{\Q_2}\Ga.$
Equation \ref{eq:fund} implies that:
$$h(vx) a(vx) h(vx)^{-1} = c_v(vx) h(x) a(x) h(x)^{-1} c_v(vx)^{-1} \text{ for all } a\in K, x\in \Q_2, v\in V.$$
Hence, $\ad(h) = \ad(c_v) \circ \ad(h^v)$ implying that 
$$c_v = h(h^v)^{-1} \mod \prod_{\Q_2}Z\Ga \text{ for all } v\in V.$$
Therefore, $c_v = d_v \cdot h(h^v)^{-1} $ where $d_v\in \prod_{\Q_2} Z\Ga$ for all $v\in V.$
Moreover, $d:v\mapsto d_v$ must be a cocycle that is valued in $\prod_{\Q_2}Z\Ga$.
Since $Z\Ga$ is abelian, Proposition \ref{prop:cocycle Abelian} implies that there exists a pair $(\zeta,f_0)$ with $\zeta\in Z\Ga, f_0\in \prod_{\Q_2} Z\Ga$ satisfying
$$d_v = s(\zeta)_v\cdot f_0(f_0^v)^{-1} \text{ for all } v\in V,$$
where $s(\zeta)_v(x)=\zeta^{\log_2(v'(v^{-1}x))},v\in V,x\in\Q_2$ is the slope cocycle defined in Proposition \ref{prop:cocycle Abelian}.
Therefore, $$c_v = \zeta^{p_v} \cdot f(f^v)^{-1}, \ v\in V$$
where $p_v=\log_2(v')^v - \nu + \nu^v$ is the map defined in Proposition \ref{prop:zetacocycle} and where $f:= h\cdot f_0\cdot \zeta^\nu.$

Up to a composition with $$E_\zeta:av\mapsto a \cdot \zeta^{p_v}\cdot v$$ we can now assume that $\theta$ is of the form
$$\theta(av) = \ad(f)(av) = faf^{-1} f(f^v)^{-1} v, a\in \oplus_{\Q_2}\Ga, v\in V$$
where $f\in \ov K$ and necessarily $f\in N_{\ov K}(G).$

We have proven that $\theta$ is a product of automorphisms of the form $\ad(f), E_\zeta$ and $A_{\varphi,\beta}$ with $f\in N_{\ov K}(G), \zeta\in Z\Ga, \varphi\in \NCV, \beta\in\Aut(\Ga)$ implying that the range of $\Xi$ is generating $\Aut(G)$ and thus $\Xi$ is surjective.
\end{proof}


\begin{thebibliography}{BCM{\etalchar{+}}19}

\bibitem[Aie20]{Aiello20}
V.~Aiello. 
\newblock On the {A}lexander {T}heorem for the oriented {T}hompson group $\Vec{F}$.
\newblock{\em Algebr. Geom. Topol.}, 20:429--438, 2020.

\bibitem[ABC19]{ABC19}
V.~Aiello, A.~Brothier, and R.~Conti.
\newblock Jones representations of {T}hompson's group {F} arising from
  {T}emperley-{L}ieb-{J}ones algebras.
\newblock {\em to appear in Int. Math. Res. Not.}, 2019.

\bibitem[BCM{\etalchar{+}}19]{BCMNO19}
C.~Bleak, P.~Cameron, Y.~Maissel, A.~Navas, and F.~Olukoya.
\newblock The further chameleon groups of {R}ichard {T}hompson and {G}raham
  {H}igman: automorphisms via dynamics for the {H}igman-{T}hompson groups
  ${G}_{n,r}$.
\newblock Preprint 2019.

\bibitem[Bel04]{Belk04}
J.~Belk.
\newblock {\em Thompson's group {F}}.
\newblock PhD thesis, Cornell University, 2004.

\bibitem[BZFG{\etalchar{+}}18]{BZFGHM18}
R.~Berns-Zieve, D.~Fry, J.~Gillings, H.~Hoganson, and H.~Mathews.
\newblock Groups with context-free co-word problem and embeddings into
  Thompson's group $V$.
\newblock {\em In N. Broaddus, M. Davis, J. Lafont, \& I. Ortiz (Eds.),
  Topological Methods in Group Theory (London Mathematical Society Lecture Note
  Series). Cambridge: Cambridge University Press.}, pages 19--37, 2018.
\newblock doi:10.1017/9781108526203.003.

\bibitem[Bis17]{Bischoff17}
M.~Bischoff.
\newblock The relation between subfactors arising from conformal nets and the
  realization of quantum doubles.
\newblock {\em Proc. Centre Math. Appl.}, 46:15--24, 2017.

\bibitem[BL10]{Bleak-Lanoue10}
C.~Bleak and D.~Lanoue.
\newblock A family of non-isomorphisms results.
\newblock {\em Geom. Dedicata}, 146:21--26, 2010.

\bibitem[BMN16]{BleakMatucciNeunhoffer16}
C.~Bleak, F.~Matucci, and M.~Neunh\"offer.
\newblock Embeddings into {T}hompson's group $v$ and $co\mathcal{CF}$ groups.
\newblock {\em J. London Math. Soc.}, 2016.

\bibitem[Bor94]{Bodnarchuk94}
Y.~Bodnarchuk.
\newblock On the isomorphism of wreath products of groups.
\newblock{\em Ukr. Math. J.}, 46(6):725--734, 1994. 

\bibitem[Bri04]{Brin04-nV}
M.~Brin.
\newblock Higher dimensional {T}hompson groups.
\newblock{\em Geom. Dedicata}, 108:163--192, 2004.

\bibitem[Bri07]{Brin07-BraidedThompson}
M.~Brin.
\newblock The algebra of stand splitting. {I}. {A} braided version of
  {T}hompson's group {V}.
\newblock {\em J. Group Theory}, 10(6):757--788, 2007.

\bibitem[BJ19a]{Brot-Jones18-1}
A.~Brothier and V.F.R. Jones.
\newblock On the {H}aagerup and {K}azhdan property of {R}. {T}hompson's groups.
\newblock {\em J. Group Theory}, 22(5):795--807, 2019.

\bibitem[BJ19b]{Brot-Jones18-2}
A.~Brothier and V.F.R. Jones.
\newblock Pythagorean representations of {T}homspon's groups.
\newblock {\em J. Funct. Anal.}, 277:2442--2469, 2019.

\bibitem[Bro19a]{Brothier19WP}
A.~Brothier.
\newblock Haagerup property for wreath products constructed with {T}hompson's
  groups.
\newblock {\em Preprint, arXiv:1906.03789}, 2019.

\bibitem[Bro19b]{Brothier-19-survey}
A.~Brothier.
\newblock On {J}ones' connections between subfactors, conformal field theory,
  {T}hompson's groups and knots.
\newblock {\em to appear in Celebratio Mathematica}.

\bibitem[Bro20]{Brothier20-2}
A.~Brothier.
\newblock Classification of {T}hompson related groups arising from {J}ones technology {II}.
\newblock {\em Preprint, arXiv:2011.13124}, 2020.

\bibitem[BS19]{Brot-Stottmeister-Phys}
A.~Brothier and A.~Stottmeister.
\newblock Canonical quantization of 1+1-dimensional yang-mills theory: an
  operator algebraic approach.
\newblock {\em Preprint, arXiv:1907.05549}, 2019.

\bibitem[BS20]{Brot-Stottmeister-M19}
A.~Brothier and A.~Stottmeister.
\newblock Operator-algebraic construction of gauge theories and {J}ones'
  actions of {T}hompson's groups.
\newblock {\em Comm. Math. Phys.}, 376:841--891, 2020.

\bibitem[Bro87]{Brown87}
K.~Brown
\newblock Finiteness properties of groups.
\newblock {\em J. Pure. App. Algebra}, 44:45--75, 1987.

\bibitem[CFP96]{Cannon-Floyd-Parry96}
J.W. Cannon, W.J. Floyd, and W.R. Parry.
\newblock Introductory notes on {R}ichard {T}hompson's groups.
\newblock {\em Enseign. Math.}, 42:215--256, 1996.

\bibitem[Cor06]{Cornulier06}
Y.~Cornulier.
\newblock Finitely presented wreath products and double coset decompositions.
\newblock {\em Geom. Dedicata}, 122(1):89--108, 2006.

\bibitem[Deh06]{Dehornoy06}
P.~Dehornoy.
\newblock The group of parenthesized braids.
\newblock {\em Advances Math.}, 205:354--409, 2006.

\bibitem[EK92]{Evans_Kawahigashi_92_sf_cft}
D.~Evans and Y.~Kawahigashi.
\newblock Subfactors and conformal field theory.
\newblock {\em Math. Phys. Stud.}, 16, 1992.

\bibitem[Far03]{Farley03-H}
D.~Farley.
\newblock Proper isometric actions of {T}hompson's groups on {H}ilbert space.
\newblock {\em Int. Math. Res. Not.}, 45:2409--2414, 2003.

\bibitem[Far05]{Farley05}
D.~Farley.
\newblock Actions of picture groups on {CAT(0)} cubical complexes. 
\newblock{\em Geom. Dedicata}, 110:221-242, 2005.

\bibitem[GS87]{Ghys-Sergiescu87}
E.~Ghys and V.~Sergiescu.
\newblock Sur un groupe remarquable de diffeomorphismes du cercle.
\newblock {\em Comment. Math. Helvetici}, 62:185--239, 1987.

\bibitem[GS00]{GNS00}
Nekrashevych~V. Grigorchuk, R. and V.~Sushchanskii.
\newblock Automata, dynamical systems, and groups.
\newblock {\em Proc. Steklov Inst. Math.}, 231:128--203, 2000.

\bibitem[GZ67]{GabrielZisman67}
P.~Gabriel and M.~Zisman.
\newblock {\em Calculus of fractions and homotopy theory}.
\newblock Springer-Verlag, 1967.

\bibitem[GS17]{Golan-Sapir17}
G.~Golan and M.~Sapir.
\newblock On {J}ones' subgroup of {R}. {T}hompson group {F}.
\newblock {\em J. of Algebra}, 470:122--159, 2017.

\bibitem[Gro88]{Gross88}
F.~Gross.
\newblock Automorphisms of permutational wreath products.
\newblock {\em J. Algebra}, 117:472--493, 1988.

\bibitem[Gro92]{Gross92}
F.~Gross.
\newblock On the uniqueness of wreath products.
\newblock{\em J. Algebra}, 147:147--175, 1992.

\bibitem[GS97]{Guba-Sapir97}
V.~Guba and M.~Sapir.
\newblock Diagram groups. 
\newblock{\em Mem. Amer. Math. Soc.}, 130,  1997.

\bibitem[Has78]{Hassanabi78}
A.~Hassanabi.
\newblock Automorphisms of permutational wreath products.
\newblock{\em J. Austral. Math. Soc. Ser. A}, 26:198-208, 1978.

\bibitem[Hig74]{Higman74}
G.~Higman.
\newblock Finitely presented infinite simples groups.
\newblock {\em Notes on Pure Mathematics 8}, Australian National University, Canberra, 1974.

\bibitem[Hou63]{Houghton63}
C.~Houghton.
\newblock On the automorphism groups of certain wreath products. 
\newblock{\em Publ. Math. Debrecen}, 9:307-313, 1963.

\bibitem[Hug09]{Hughes09}
B.~Hughes.
\newblock Local similarities and the {H}aagerup property. 
\newblock{\em Groups Geom. Dyn.}, 3:299-315, 2009.

\bibitem[Ish17]{Ishida17}
T.~Ishida.
\newblock Ordering of {W}itzel-{Z}aremsky-{T}hompson groups.
\newblock {\em Comm. Algebra}, 46:3806--3809, 2017.

\bibitem[JMS14]{Jones-Morrison-Synder14}
V.F.R. Jones, S.~Morrison, and N.~Snyder.
\newblock The classification of subfactors of index at most 5.
\newblock {\em Bull. Amer. Math. Soc.}, 51:277--327, 2014.

\bibitem[Jon85]{Jones_polynome_vna}
V.F.R. Jones.
\newblock A polynomial invariant for knots via von {N}eumann algebras.
\newblock {\em Bull. Amer. Math. Soc.}, 12:103--112, 1985.

\bibitem[Jon17]{Jones17-Thompson}
V.F.R. Jones.
\newblock Some unitary representations of {T}ompson's groups {F} and {T}.
\newblock {\em J. Comb. Algebra}, 1(1):1--44, 2017.

\bibitem[Jon18a]{Jones16-Thompson}
V.F.R. Jones.
\newblock A no-go theorem for the continuum limit of a periodic quantum spin
  chain.
\newblock {\em Comm. Math. Phys.}, 357(1):295--317, 2018.

\bibitem[Jon18b]{Jones18-Hamiltonian}
V.F.R. Jones.
\newblock Scale invariant transfer matrices and {H}amiltonians.
\newblock {\em J. Phys. A: Math. Theor.}, 51:104001, 2018.

\bibitem[Jon19a]{Jones19Irred}
V.F.R. Jones.
\newblock Irreducibility of the wysiwyg representations of {T}hompson's groups.
\newblock {\em Preprint, arXiv:1906.09619}, 2019.

\bibitem[Jon19b]{Jones19-thomp-knot}
V.F.R. Jones.
\newblock On the construction of knots and links from {T}hompson's groups.
\newblock {\em In: Adams C. et al. (eds) Knots, Low-Dimensional Topology and
  Applications}, 284, 2019.
  
\bibitem[Leh08]{Lehnert08-thesis}
J.~Lehnert.
\newblock Gruppen von quasi-automorphismen.
\newblock {\em Goethe Universitat, Frankfurt}, 2008. 

\bibitem[LR95]{Longo-Rehren95}
R.~Longo and K.~Rehren.
\newblock Nets of subfactors. 
\newblock {\em Rev. Math. Phys.}, 7:567-597, 1995.

\bibitem[Mal53]{Maltsev53}
A.~Mal'tsev.
\newblock Nilpotent semigroups.
\newblock {\em Uchen. Zap. Ivanovsk. Ped. Inst.}, 4:107--111, 1953.

\bibitem[Nav02]{Navas02}
A.~Navas.
\newblock Actions de groupes de {K}azhdan sur le cercle.
\newblock {\em Ann. Sci. Ec. Norm. Super.}, 35(4):749--758, 2002.

\bibitem[Nek04]{Nekrashevych04}
V.~Nekrashevych.
\newblock Cuntz-{P}imsner algebras of group actions.
\newblock {\em J. Operator Theory}, 52:223--249, 2004.

\bibitem[Nek05]{Nekrashevych05}
V.~Nekrashevych.
\newblock Self-similar groups.
\newblock {\em Math. Survey Monogr., Amer. Math. Soc.}, 117, 2005.

\bibitem[Neu64]{Neumann64}
P.~Neumann.
\newblock On the structure of standard wreath products of groups.
\newblock {\em Math. Zeitschr.}, 84:343--373, 1964.

\bibitem[OS19]{Osborne-Stiegemann19}
T.~Osborne and D.~Stiegemann.
\newblock Quantum fields for unitary representations of {T}hompson's group {F}
  and {T}.
\newblock {\em Preprint, arXiv:1903.00318}, 2019.

\bibitem[Rez01]{Reznikoff01}
A.~Reznikoff.
\newblock Analytic topology.
\newblock {\em Progr. Math.}, 1:519--532, 2001.

\bibitem[Rov99]{Rover99}
C.~Rover.
\newblock Constructing finitely presented simple groups that contain {G}rigorchuk groups. 
\newblock{ \em J. Alg.}, 220:284-313, 1999

\bibitem[Rub89]{Rubin89}
M.~Rubin.
\newblock On the reconstruction of topological spaces from their groups of homeomorphisms. 
\newblock {\em Trans. Amer. Math. Soc.}, 312(2):487--538, 1989.

\bibitem[Rub96]{Rubin96}
M.~Rubin.
\newblock Locally moving groups and reconstruction problems.
\newblock {\em Math. Appl.}, 354:121--157, 1996.

\bibitem[Sti19]{Stiegemann19-thesis}
D.~Stiegemann.
\newblock Thompson field theory.
\newblock {\em PhD thesis, arXiv:1907.08442}, 2019.

\bibitem[Tan16]{Tanushevski16}
S.~Tanushevski.
\newblock A new class of generalized thompson's groups and their normal
  subgroups.
\newblock {\em Commun. Algebra}, 44:4378--4410, 2016.

\bibitem[Tan17]{Tanushevski17}
S.~Tanushevski.
\newblock Presentations for a class of generalized thompson's groups.
\newblock {\em Commun. Algebra}, 45(5):2074--2090, 2017.

\bibitem[WZ18]{Witzel-Zaremsky18}
S.~Witzel and M.~Zaremsky.
\newblock Thompson groups for systems of groups, and their finiteness
  properties.
\newblock {\em Groups Geom. Dyn.}, 12(1):289--358, 2018.

\bibitem[Xu18]{Xu18-CFT}
F.~Xu.
\newblock Examples of subfactors from conformal field theory.
\newblock {\em Comm. Math. Phys.}, 357:61--75, 2018.

\bibitem[Zar18]{Zaremsky18-clone}
M.~Zaremsky.
\newblock A user's guide to cloning systems.
\newblock {\em Topology Proc.}, 52:13--33, 2018.

\end{thebibliography}

\newcommand{\etalchar}[1]{$^{#1}$}

%
%

\end{document}